\documentclass[a4paper,12pt]{amsart}

\usepackage{amsmath, amsthm, amssymb}
\usepackage[dvipsnames]{xcolor}
\usepackage{hyperref}
\usepackage{comment}
\usepackage{setspace}
\usepackage[backgroundcolor=white,bordercolor=white]{todonotes}
\usepackage{enumerate}
\usepackage{fancyhdr}
\usepackage[normalem]{ulem}

\newtheorem{thm}{Theorem}[section]
\newtheorem{cor}[thm]{Corollary}
\newtheorem{prop}[thm]{Proposition}
\newtheorem{lem}[thm]{Lemma}

\theoremstyle{remark}
\newtheorem{rem}[thm]{Remark}

\theoremstyle{definition}
\newtheorem{defi}[thm]{Definition}

\theoremstyle{theorem}
\newtheorem{asmp}[thm]{Assumption}

\newcommand{\R}{\ensuremath{\mathbb{R}}}
\newcommand{\calS}{\ensuremath{\mathcal{S}}}

\DeclareMathOperator*{\esssup}{ess \, sup}

\usepackage{color}

\definecolor{gr}{rgb}   {0.,   0.8,   0. } 
\definecolor{bl}{rgb}   {0.,   0.5,   1. } 
\definecolor{mg}{rgb}   {0.7,  0.,    0.7}

\title{On the Well-posedness of  Magnetic Schr\"odinger Equations with Unbounded Potentials}
\author{Dorothee Frey}
\address{Department of Mathematics, Karlsruhe Institute of Technology, 76131 Karlsruhe, Germany}
\email{dorothee.frey@kit.edu}

\author{Siliang Weng}
\address{Department of Mathematics, Karlsruhe Institute of Technology, 76131 Karlsruhe, Germany}
\email{siliang.weng@kit.edu}

\thanks{Funded by the Deutsche Forschungsgemeinschaft (DFG, German Research Foundation) – Project-ID 258734477 – SFB 1173. The second author wishes to thank H. Cornean for very helpful discussions.}

\subjclass{Primary: 35S10; Secondary: 35Q41, 42B35, 81Q15}

\keywords{ Schr\"odinger equation, modulation spaces,  magnetic pseudodifferential operators,  unbounded potentials,  phase space transforms, magnetic Gaussian wavepackets}

\begin{document}
	
	\pagestyle{plain}

	%\doublespacing
	%\onehalfspacing
	
	%\tableofcontents
	
	\begin{abstract}
		We consider  magnetic Schr\"odinger equations with  sublinear  magnetic potentials and  subquadratic  electric potentials   on $\mathbb{R}^{d}$, as well as generalizations thereof.  We obtain  new results on the   global well-posedness  of the Cauchy problem with initial data in  magnetic modulation spaces  $M^{p}_{A}(\mathbb{R}^{d})$.   Our  results are achieved by  approximating the solution in phase space  using the magnetic Hamiltonian flow.  This method includes the potentials as part of the generalized Schr\"odinger operator instead of treating them as perturbations,  and  thereby allows us to deal with unbounded potentials.   For $A \equiv 0$, the space $M^{p}_{A}(\mathbb{R}^{d})$ reduces to the usual modulation space $M^{p}(\mathbb{R}^{d})$, for which relevant known results for the usual Schr\"odinger equation can be recovered.

	\end{abstract}
	
	%\date{\today}
	
	\maketitle
	
	\section{Introduction}

	Let $d \geq 2$. In this article, we study the global well-posedness of the Cauchy problem for the  Schr\"odinger equation  with unbounded electromagnetic potentials $A,V$  in $\mathbb{R}\times\mathbb{R}^{d}$,
	\begin{equation}\label{eq:usual magnetic Schrodinger}
		i\partial_{t}u(t;y) = (H^{A} + V(t))u(t;y),\qquad u(0) = u_{0}
	\end{equation}
 on magnetic modulation spaces $M^{p}_{A}(\mathbb{R}^{d})$ as defined below.  
Here $H^{A}$ denotes the magnetic Laplacian with the magnetic vector potential $A = (A_{1},A_{2},\dots,A_{d})$, that is
	$$
	H^{A} := \frac{1}{2} \sum_{j=1}^d\left(-i \partial_j-A_j(y)\right)^{2},
	$$
	and $V(t) \in C^{\infty}(\mathbb{R}^{d},\mathbb{R})$ denotes the time-dependent electric scalar potential. We consider unbounded potentials $A$ with at most linear growth and $V(t)$ with at most quadratic growth, that is for multi-indices $\alpha$ and $\beta$, the potentials satisfy
	$$
	|\partial^{\alpha}A(y)| \leq C_{A,\alpha}, \qquad  |\partial^{\beta}_{y}V(t;y)| \leq C_{V,\beta}, \qquad   |\alpha| \geq 1,\ |\beta| \geq 2.
	$$
	We also define the magnetic field $B$ to be a smooth, closed $2$-form $$B(y):=\sum_{1 \leq j, k \leq d} B_{j k}(y) d y_j \wedge d y_k$$ with the skew symmetry $B_{jk} = -B_{kj} \in C^{\infty}(\mathbb{R}^{d},\mathbb{R})$. Then a magnetic vector potential $A$ can be associated to $B$ if it satisfies $dA = B$, that is $\partial_{k}A_{j}-\partial_{j}A_{k} = B_{jk}$.	
	
	For  differential operators with long-range magnetic potentials, a pseudo-differential calculus  adapted to the magnetic potential   has been developed  by Iftimie, Măntoiu, and Purice in \cite{Mantoiu_Weyl}, \cite{Iftimie_MagneticPsiDO}. The theory extends the usual Weyl calculus to the magnetic Weyl calculus $h \mapsto \operatorname{Op}^{A}(h)$, mapping a symbol $h \in \mathcal{S'}(\mathbb{R}^{2d})$ to a magnetic pseudo-differential operator (magnetic $\Psi$DO) $\operatorname{Op}^{A}(h)$, acting on $u \in \mathcal{S}(\mathbb{R}^{d})$  as  
	\begin{equation}\label{def:magnetic psido}
		\operatorname{Op}^{A}(h)u(y) := (2\pi)^{-d}\int_{\mathbb{R}^{d}\times\mathbb{R}^{d}} e^{i(y-y')\cdot \eta} e^{-i\varphi^{A}(y',y)} h(\tfrac{y+y'}{2},\eta)u(y')\ dy'd\eta.
	\end{equation}
	The precise meaning of the above expression will be recalled in the next section.  We note here that this adapted formulation differs from the usual Weyl calculus only by a magnetic-dependent phase term $e^{-i\varphi^{A}(y',y)}$, which has the property
	\begin{equation}\label{eq:magnetic phase 0}
		e^{-i\varphi^{A}(y',y)}\big|_{A=0} \equiv 1.
	\end{equation}
We thus recover the usual Weyl calculus  as the special case  $A\equiv 0$. With this formulation, the magnetic Schr\"odinger operator in \eqref{eq:usual magnetic Schrodinger}  can be expressed as  
	\begin{equation}\label{eq:usual magnetic Schrodinger symbol}
		H^{A}+V = \operatorname{Op}^{A}(h),\qquad  h(t;y,\eta) = |\eta|^{2} + V(t;y).
	\end{equation}
	We show under suitable conditions, the Cauchy problems associated to a family of equations that contains \eqref{eq:usual magnetic Schrodinger}
	\begin{equation}\label{eq:general magnetic Schrodinger}
		i\partial_{t}u(t,y) = \operatorname{Op}^{A}(h)u(t;y),\quad u(0) = u_{0}
	\end{equation}
	is well-posed  on   the magnetic modulation space $M^{p}_{A}(\mathbb{R}^{d})$, for  $1\leq p \leq \infty$. Here the space $M^{p}_{A}(\mathbb{R}^{d})$ can be seen as the $A$-adapted version of the usual modulation space, defined through the magnetic wavepacket transform $\mathcal{T}^{A}$ for $u \in \mathcal{S'}(\mathbb{R}^{d})$
	$$
	\mathcal{T}^{A}u(x,\xi) := \int e^{-i\xi\cdot(y-x)}e^{-i\varphi^{A}(y,x)}g(y-x)u(y)\ dy,\quad  \|u\|_{M^{p}_{A}(\mathbb{R}^{d})} := \left\|\mathcal{T}^{A}u\right\|_{L^{p}(\mathbb{R}^{2d})},
	$$
	where $g$ denotes the normalized Gaussian function on $\mathbb{R}^{d}$ (a window function). In particular, for $p=2$ we recover the usual $L^{2}$ space, that is $M^{2}_{A}(\mathbb{R}^{d}) \cong L^{2}(\mathbb{R}^{d})$. We also note, although the space $M^{p}_{A}$ is formulated explicitly with the magnetic potential $A$, it  in fact intrinsically depends  only  on the magnetic field $B$. In other words, $M^{p}_{A}$ is gauge-independent, in the sense that for another magnetic potential $A'$ with $dA' = dA = B$, we have $M^{p}_{A} \simeq M^{p}_{A'}$ isometrically.
	
	This choice of space is inspired by the well-posedness of the usual Schr\"odinger equation in modulation spaces $M^{p,q}$, see \cite{Benyi_Unimodular}, \cite{Benyi_Modulation} and the references therein. Also due to the phase term $e^{-i\varphi^{A}}$ and its property \eqref{eq:magnetic phase 0}, the magnetic modulation space $M^{p}_{A}$ naturally extends the usual modulation space $M^{p,p}$, and is well-behaved interacting with the adapted magnetic $\Psi$DOs \eqref{def:magnetic psido}.  One of the early ideas of defining magnetic versions of modulation spaces can be found in Măntoiu-Purice \cite{Mantoiu_Modulation}, where they considered the case  $p=2$. To the best of our knowledge, the definition and discussion of $M^{p}_{A}$ and its weighted version $M^{m,p}_{A}$ in this article is new, and so are its applications to magnetic Schr\"odinger equations.
	
	Previously well-posedness results for the Cauchy problem of \eqref{eq:usual magnetic Schrodinger} with unbounded potentials are mostly obtained on $L^{2}$-based spaces. One foundational work is Yajima \cite{Yajima_Schrodinger}, where unbounded, non-smooth potentials with $t$-dependence are considered. More generally, Doi in \cite{Doi_Smoothness} showed under certain conditions the variable-coefficient version of \eqref{eq:usual magnetic Schrodinger} with unbounded potentials is also well-posed in $L^{2}$-based weighted Sobolev spaces. On the other hand, results for $L^{p}$-based spaces with $p \neq 2$ seem out of reach, since the non-magnetic Schr\"odinger group $e^{-it\Delta}$ is already unbounded on $L^{p}$ for $p \neq 2$. For the non-magnetic $A\equiv 0$ case, the introduction of the modulation space $M^{p,q}$ as a substitute for $L^{p}$ has been very successful, since the Schr\"odinger group $e^{-it\Delta}$ has been shown to be bounded on the modulation spaces $M^{p,q}$ \cite{Benyi_Unimodular}. This suggests taking spaces of modulation type as candidates for wellposedness of magnetic Schr\"odinger equations. In this direction, Kato-Muramatsu \cite{Kato_Sub-linear} obtained a-priori estimates on $M^{p}$, but there the existence of the solution is still restricted to $L^{2}$.
	
	By introducing the magnetic modulation space $M^{p}_{A}$  and the weighted version $M^{m,p}_{A}$, we obtain a global well-posedness result for \eqref{eq:general magnetic Schrodinger}.  Our assumptions on the potentials are the following. 
	
	\begin{asmp}\label{asmp:general}
		Fix $\epsilon > 0$. We assume the magnetic field $B$ has  smooth components $\{B_{jk}\}_{1\leq j,k\leq d}$, which are all bounded with $\|B_{jk}\|_{\infty} \leq C_{B}$ for some $C_B>0$. Furthermore, we assume the derivatives of $B_{jk}$ satisfy a decay condition, that is  for multi-indices $\alpha$, there exists $C_{\alpha,B}>0$ such that 
		\begin{equation}\label{eq:B decay}
			|\partial^{\alpha}B_{jk}(y)| \leq C_{\alpha,B}  \langle y\rangle^{-1-\epsilon}, \qquad  |\alpha| \geq 1.
		\end{equation}
		For such given magnetic field $B$, we define the magnetic vector potential $A$ by the transversal gauge
		\begin{equation}\label{def:A}
			A_j(y):=-\sum_{k=1}^d \int_0^1 sy_k B_{j k}(sy)\  d s, \quad y \in \mathbb{R}^{d}.
		\end{equation}
		For the electric scalar potential $V$, we allow $t$-dependence and assume that $V \in C(\mathbb{R},C^{\infty}(\mathbb{R}))$, with quadratic growth bounds that is uniform in $t$,
		\begin{equation}\label{def:V}
			|\partial^{\alpha}_{y}V(t;y)| \leq C_{V,\alpha},\qquad  |\alpha| \geq 2, \; t \in \mathbb{R}.
		\end{equation}
	\end{asmp}
	
	For the magnetic $\Psi$DO $\operatorname{Op}^{A}(h)$ generalizing the magnetic Schr\"odinger operator, we define the symbol class $S^{0,(k)}$, which aligns with the symbol class used in Tataru \cite{Tataru_Phase}.
	\begin{defi}
		For $k \in \mathbb{N}$ and multi-indices $\alpha,\beta$, a symbol $h \in C^{\infty}(\mathbb{R}^{2d})$ is in the symbol class $S^{0,(k)}(\mathbb{R}^{2d})$ if it satisfies
		$$
			|\partial_{y}^{\alpha}\partial_{\eta}^{\beta}h(y,\eta)| \leq C_{\alpha,\beta},\qquad |\alpha|+|\beta| \geq k.
		$$
	\end{defi}
	
	We make the following assumption on the symbols.
	\begin{asmp}\label{asmp:h}
		We assume the time-dependent symbol $h \in C^{1}(\mathbb{R},C^{\infty}(\mathbb{R}^{2d}))$ is real-valued, and $h(t), \dot{h}(t) \in S^{0,(2)}(\mathbb{R}^{2d})$ uniformly in $t$.  Furthermore, we assume $h$ is of second-order in $\eta$, that is, for all $t\in \mathbb{R}$ there exists  $C>0$ such that $|h(t;0,\eta)|\geq C|\eta|^{2}$.
	\end{asmp}
	
	\begin{rem}
	 While the second-order part of the assumption guarantees the operator $\operatorname{Op}^{A}(h)$ to be of  magnetic Schr\"odinger operator type, our result also holds free of this part of the assumption. In fact, for $h$ that is first-order in $\eta$, the arguments will be simpler. 
	\end{rem}
	
	The above assumption gives a family of symbols which contains the symbol $h(t;y,\eta) = |\eta|^{2}+V(t;y)$ in \eqref{eq:usual magnetic Schrodinger symbol} with $V$ satisfying  Assumption \ref{asmp:general}. Our main result is the following. 
	
	\begin{thm}\label{thm:main} 
	Let $T > 0$ and $1\leq p \leq \infty$. For a given magnetic field $B$ satisfying Assumption \ref{asmp:general} and a real-valued symbol $h$ satisfying Assumption \ref{asmp:h}, the Cauchy problem with initial value  $u_{0} \in M^{2,p}_{A}(\mathbb{R}^{d})$ 
		\begin{equation}\label{eq:electro-magnetic weyl schrodinger}
			\begin{cases}
				\ i\partial_{t}u(t,y) = \operatorname{Op}^{A}(h)u(t,y),\\
				\ u(0,y) = u_{0}(y),
			\end{cases} 
		\end{equation}
		has a unique strong solution $u \in W^{1,1}([0,T],M_{A}^{p}(\mathbb{R}^{d}))$ with $u(t) \in M^{2,p}_{A}(\mathbb{R}^{d})$ for all $t \in [0,T]$. 
		Moreover, we have the bound
		$$
		\|u(t)\|_{M^{2,p}_{A}(\mathbb{R}^{d})} \leq C_{T,h,B}\|u_{0}\|_{M^{2,p}_{A}(\mathbb{R}^{d})}.
		$$
	\end{thm}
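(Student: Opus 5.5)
The plan is to conjugate the evolution by the magnetic wavepacket transform $\mathcal{T}^{A}$ and work entirely in phase space, following Tataru's approach \cite{Tataru_Phase} adapted to the magnetic setting. We first establish a conjugation identity of the form
\[
\mathcal{T}^{A}\operatorname{Op}^{A}(h)u = \bigl(-i H_{h^{B}}\cdot\nabla_{x,\xi} + h^{B} - \ldots + R\bigr)\mathcal{T}^{A}u ,
\]
where $H_{h^{B}}$ is the magnetic Hamiltonian vector field. Here $B$ enters through the twisted symplectic form, so that $\dot x = \partial_\xi h$ and $\dot \xi = -\partial_x h + B(x)\partial_\xi h$. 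The remainder $R$ is an integral operator whose kernel decays rapidly away from the diagonal in $\mathbb{R}^{2d}$. We obtain this by Taylor expanding $h$ to second order around $(x,\xi)$ and the magnetic phase $\varphi^{A}(y,x)$ around $x$. The $S^{0,(2)}$ assumption controls the second-order Taylor remainder. The decay \eqref{eq:B decay}, together with the transversal gauge \eqref{def:A}, makes the phase corrections (differences of magnetic fluxes through small triangles) bounded with all derivatives. This in turn gives Schur-type kernel bounds for $R$ uniformly in $t$.

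Next, we solve the transport part exactly along the magnetic Hamiltonian flow $\chi_{t,s}$. Since $h\in S^{0,(2)}$ and $B$ is bounded with bounded derivatives, the vector field is globally Lipschitz, so the flow is a global bi-Lipschitz map with derivatives bounded on $[0,T]$. Being Hamiltonian for the twisted symplectic form, it preserves Lebesgue measure on $\mathbb{R}^{2d}$, hence is isometric on $L^{p}(\mathbb{R}^{2d})$. We then write the phase-space evolution as a Duhamel/Picard iteration: transport along $\chi$, plus the remainder operator conjugated by the flow. The conjugated remainder still has an off-diagonal decaying kernel, because the flow is bi-Lipschitz, and is therefore bounded on $L^{p}$ with weight.

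To return to $M^{p}_{A}$, we use the reproducing structure: $\mathcal{T}^{A*}\mathcal{T}^{A}=I$ (after normalization), and $\mathcal{T}^{A}\mathcal{T}^{A*}$ is a projection with a kernel of Gaussian decay, bounded on weighted $L^{p}$. Composing with the adjoint yields an approximate propagator $S(t,s)$, bounded on $M^{p}_{A}$ and on $M^{2,p}_{A}$. For the latter we use that the weight $\langle x,\xi\rangle^{2}$ is moderate under a bi-Lipschitz flow, since $|\chi(z)|\lesssim 1+|z|$. We then show that $i\partial_t S - \operatorname{Op}^{A}(h)S$ is bounded from $M^{2,p}_{A}$ to $M^{2,p}_{A}$ uniformly on $[0,T]$. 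A Volterra/Neumann series then produces the exact propagator, with the $M^{2,p}_{A}$ bound and a constant $C_{T,h,B}$ coming from a Gronwall-type iteration.

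For strong solvability, $\operatorname{Op}^{A}(h)$ maps $M^{2,p}_{A}$ to $M^{p}_{A}$, because $h$ grows at most quadratically. Hence $\partial_t u\in L^{1}([0,T],M^{p}_{A})$, which gives $u\in W^{1,1}$. Continuity in $t$ of the symbol is handled using $\dot h\in S^{0,(2)}$. Uniqueness follows by a duality argument: run the same construction for the adjoint backward problem on $M^{2,p'}_{A}$, or on $M^{2,1}_{A}$ when $p=\infty$, and pair it with a solution having zero data.

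The main obstacle is the conjugation identity with a remainder that is $L^{p}$-bounded uniformly, in the presence of unbounded $A$. Since $A$ itself grows linearly, the phase $e^{-i\varphi^{A}(y,x)}$ must be combined so that only the fluxes of $B$ over triangles appear. The key step is to use Stokes' theorem to show that these fluxes, and their derivatives, are controlled by $|y-x|^{2}$ times bounded quantities. I would try to prove this first using \eqref{def:A} and \eqref{eq:B decay}.
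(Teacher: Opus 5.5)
Your proposal has a genuine gap at the step you treat as routine. You claim the conjugation remainder $R$ has Schur bounds uniform in $t$, so that $i\partial_t S-\operatorname{Op}^A(h)S$ is bounded on $M^{2,p}_A$ and a Neumann series closes. In the magnetic setting this is false. Applying $-i\partial_y-A(y)$ to $g^{A,\lambda}_{x,\xi}$ produces $-A(y,x)$, which is linear in $y-x$ only when $B$ is constant. Its second-order Taylor remainder has size $|\nabla B|\,|y-x|^2$, and inside $\operatorname{Op}^A(h)g^{A,\lambda}_{x,\xi}$ it is multiplied by $\partial_\eta h(x,\xi)$, which grows linearly in $\xi$. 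Under \eqref{eq:B decay} this term (the paper's $\mathcal{R}_1$) has size $\lambda^{-2}(1+\langle x\rangle^{-1-\epsilon}|\xi|)$, which is unbounded as $|\xi|\to\infty$ with $x$ fixed. So the error operator costs a power of the phase-space weight rather than being bounded on $M^{2,p}_A$, and an iteration with uniform-in-$t$ operator norms cannot close. The same term refutes your claim that the magnetic Hamiltonian vector field is globally Lipschitz, since its Jacobian contains $\partial_x B(x)\,\partial_\xi h(x,\xi)$. The linearly growing $A$ is also not the real obstacle: the flux bound $|\Gamma^B(y,x,z)|\le C_B|y-x||y-z|$ only needs $B$ bounded, and the decay hypothesis on $\nabla B$ is not there to make anything pointwise bounded.

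The missing idea is a time-averaged bound along classical trajectories, generalizing Yajima's lemma: $\int_I\langle x^\tau\rangle^{-1-\epsilon}|\xi^\tau|\,d\tau\le C_{h,B,\epsilon}(1+|I|)$ (Lemma \ref{lem:t-averaging}). The paper proves it using the second-order-in-$\eta$ part of Assumption \ref{asmp:h}, which your proposal never uses. For large momentum, $t\mapsto|x^t|^2$ is uniformly convex on short intervals, so fast trajectories spend little time where $\langle x\rangle^{-1-\epsilon}$ is not small.

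To exploit this, the Volterra equation $v=-K(t,0)u_0-i\int_0^tK(t,s)v(s)\,ds$ is solved in $\mathcal{M}^{m,p}_A(\mathbb{R}^d,L^1([0,T]))$. This norm integrates $|\mathcal{T}^A_{\lambda,t}v(t;x^t,\xi^t)|$ in time along the flow before taking the weighted $L^p_{x,\xi}$ norm. The kernel bounds are $(\lambda^{C_N}+\lambda^{-2}\langle x^t\rangle^{-1-\epsilon}|\xi^t|)\langle\lambda(z^t-x^t)\rangle^{-N}\langle\lambda^{-1}(\zeta^t-\xi^t)\rangle^{-N}$. Together with the averaging lemma they give a contraction constant $C(T\lambda^{C}+\lambda^{-2})$, made less than $1$ by taking the wavepacket scale $\lambda$ large first and then $T$ small.

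The paper also never needs $\chi$ to be bi-Lipschitz. It writes all kernel estimates in the transported variables $(x^t,\xi^t)$, $(z^t,\zeta^t)$ and uses only volume preservation, which sidesteps the non-Lipschitz vector field.
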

	This result gives the following simple corollary.
	\begin{cor}
		Let $T > 0$ and $1\leq p\leq \infty, s \geq 2$. For the same magnetic Schr\"odinger operator $\operatorname{Op}^{A}(h)$ as in Theorem \ref{thm:main} and  $f \in L^{1}([0,T],M^{s,p}_{A}(\mathbb{R}^{d}))$, the Cauchy problem with initial value $u_{0} \in M^{2,p}_{A}(\mathbb{R}^{d})$ 
		$$
			\begin{cases}
				\ i\partial_{t}u(t,y) = \operatorname{Op}^{A}(h)u(t,y) + f(t,y),\\
				\ u(0,y) = u_{0}(y)
			\end{cases} 
		$$
		has a unique strong solution $u \in W^{1,1}([0,T],M^{p}_{A}(\mathbb{R}^{d}))$ with $u(t) \in M^{2,p}_{A}(\mathbb{R}^{d})$ for all $t \in [0,T]$.
	\end{cor}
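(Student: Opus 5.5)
The corollary follows from Theorem \ref{thm:main} by Duhamel's principle. Let $U(t,s)$ denote the evolution (propagator) for the homogeneous problem \eqref{eq:electro-magnetic weyl schrodinger} started at time $s$. Since $h$ is time-dependent, Theorem \ref{thm:main} has to be applied with initial time $s$ instead of $0$. Replacing $h(t)$ by $h(t+s)$ does not affect Assumption \ref{asmp:h}, because the bounds there are uniform in $t$. Hence for every $0\le s\le t\le T$ the operator $U(t,s)$ is well defined on $M^{2,p}_{A}$, with
$$\|U(t,s)v\|_{M^{2,p}_{A}}\le C_{T,h,B}\|v\|_{M^{2,p}_{A}}$$
uniformly in $s,t$. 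The candidate solution is
$$u(t)=U(t,0)u_0-i\int_0^t U(t,s)f(s)\,ds .$$

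First, $f(s)\in M^{s,p}_{A}\hookrightarrow M^{2,p}_{A}$ for $s\ge 2$. This is the monotonicity of the weighted magnetic modulation spaces in the weight $\langle(x,\xi)\rangle^{s}$, which is immediate from the definition via $\mathcal T^{A}$. The integral therefore converges as a Bochner integral in $M^{2,p}_{A}$ once $s\mapsto U(t,s)f(s)$ is strongly measurable. Measurability comes from the strong continuity of $s\mapsto U(t,s)v$, which I would extract from the construction in the theorem, for instance by approximating $f$ by simple functions. The uniform bound then gives
$$\|u(t)\|_{M^{2,p}_{A}}\le C\bigl(\|u_0\|_{M^{2,p}_{A}}+\|f\|_{L^1M^{2,p}_{A}}\bigr).$$

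Next I would verify that $u$ is a strong solution in $W^{1,1}([0,T],M^p_A)$. For this I need two facts:
\begin{itemize}
\item $\operatorname{Op}^{A}(h(t))$ is bounded from $M^{2,p}_{A}$ to $M^{p}_{A}$, uniformly in $t$, since $h\in S^{0,(2)}$ grows at most quadratically.
\item $\partial_t U(t,s)v=-i\operatorname{Op}^{A}(h(t))U(t,s)v$ in $M^p_A$, with $U(s,s)=I$.
\end{itemize}
Both should be available from the proof of the theorem. Differentiating the Duhamel integral, in the integrated form $u(t)-u_0=\int_0^t(\dots)$ to avoid pointwise issues, gives
$$i\partial_t u=\operatorname{Op}^{A}(h)u+f \quad\text{in } L^1([0,T],M^p_A).$$
This computation is a Fubini argument: write $U(t,s)=I+\int_s^t\partial_\tau U(\tau,s)\,d\tau$ and swap the order of integration. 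It yields $\partial_t u\in L^1M^p_A$.

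Uniqueness is reduced to the theorem by linearity. The difference of two solutions solves the homogeneous problem with zero data, and by the uniqueness part of Theorem \ref{thm:main} it vanishes.

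The main obstacle is the measurability and continuity in $s$ of $U(t,s)$, which the theorem's statement does not spell out. If this is not directly available, I would take a different route. I would approximate $f$ by step functions in $L^1M^{2,p}_A$ and solve piecewise using the theorem on each subinterval. Stability in $M^p_A$ would then be controlled through the a priori bound, and I would pass to the limit in $W^{1,1}([0,T],M^p_A)$.
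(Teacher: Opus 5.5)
Your proposal is correct and follows the route the paper intends: a Duhamel reduction to Theorem \ref{thm:main}. The paper calls the result a ``simple corollary'' and gives no proof. The continuity in $s$ that you flag can be closed directly, so the step-function detour (which would still need Duhamel on each piece) is not needed: uniqueness gives $U(t,s)U(s,r)=U(t,r)$, the strong-solution property gives $\|U(s,s')v-v\|_{M^{p}_{A}}\lesssim|s-s'|\,\|v\|_{M^{2,p}_{A}}$, and the proof of the theorem also bounds $U(t,s)$ on $M^{p}_{A}$, so $s\mapsto U(t,s)v$ is Lipschitz into $M^{p}_{A}$, which is enough for measurability once the $M^{2,p}_{A}$ bound is recovered from your uniform estimate.
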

	
	Our approach is also flexible enough to treat the case of time-dependent magnetic potentials $A(t)$, which we explain in Section 8.
	
	\begin{thm}\label{thm:time-dependent}
		 Let $T > 0$ and $1\leq p \leq \infty$. Let $B(t)$ be a  time-dependent magnetic field with components $B_{jk} \in C^{1}(\mathbb{R},C^{\infty}(\mathbb{R}^{d}))$. For some fixed $\epsilon > 0$, assume for all $t \in \mathbb{R}$, $B_{jk}(t)$  satisfies $\|B(t)\|_{\infty} + \|\dot{B}(t)\|_{\infty} \leq C_{B}$ and the decay condition 
		\begin{equation}\label{eq:B(t) bounds}
			|\partial^{\alpha}B_{jk}(t;y)| + |\partial^{\alpha}\dot{B}_{jk}(t;y)| \leq C_{\alpha,B}\langle y\rangle^{-1-\epsilon},\qquad  |\alpha| \geq 1.
		\end{equation}
		The Cauchy problem with initial value  $u_{0} \in M^{2,p}_{A(0)}(\mathbb{R}^{d})$ 
		\begin{equation}\label{eq:electro-magnetic weyl schrodinger with A(t)}
			\begin{cases}
				\ i\partial_{t}u(t,y) = \operatorname{Op}^{A(t)}(h)u(t,y),\\
				\ u(0,y) = u_{0}(y),
			\end{cases} 
		\end{equation}
		has a unique  strong solution $u \in W^{1,1}([0,T],M_{A(t)}^{p}(\mathbb{R}^{d}))$ with $u(t) \in M^{2,p}_{A(t)}(\mathbb{R}^{d})$ for all $t \in [0,T]$. Moreover, we have the bound
		$$
		\|u(t)\|_{M^{2,p}_{A(t)}(\mathbb{R}^{d})} \leq C_{T,h,B}\|u_{0}\|_{M^{2,p}_{A(0)}(\mathbb{R}^{d})}.
		$$
	\end{thm}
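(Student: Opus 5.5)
The plan is to reduce Theorem \ref{thm:time-dependent} to the machinery behind Theorem \ref{thm:main}, treating the time dependence of $A(t)$ as a further time-dependent perturbation in phase space. First, I would check that for each fixed $t$ the field $B(t)$ satisfies Assumption \ref{asmp:general} with constants uniform in $t$. Then the transversal gauge $A(t)$ from \eqref{def:A} has derivatives $\partial^\alpha A(t)$, $|\alpha|\ge1$, bounded uniformly in $t$. Since $\dot B$ obeys the same bounds, $\dot A(t)$ is sublinear in the same sense. Consequently all the static estimates are uniform in $t$: the magnetic Hamiltonian flow estimates, the mapping properties of $\mathcal{T}^{A(t)}$, and the boundedness of $\operatorname{Op}^{A(t)}(h)$ from $M^{2,p}_{A(t)}$ to $M^{p}_{A(t)}$.

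The key new step is to conjugate the equation by the time-dependent wavepacket transform. For the solution $u$, I would compute $\partial_t\big(\mathcal{T}^{A(t)}u(t)\big)$. This produces the term $\mathcal{T}^{A(t)}(\partial_t u)$, which is handled exactly as in the static case by the phase-space representation of $\operatorname{Op}^{A(t)}(h)$ as transport along the magnetic Hamiltonian flow plus an error. It also produces an extra term coming from $\partial_t e^{-i\varphi^{A(t)}(y,x)}$, namely multiplication by $-i\,\partial_t\varphi^{A(t)}(y,x)=-i\int_{[x,y]}\dot A(t)$, the circulation of $\dot A(t)$ along the segment from $x$ to $y$. Taylor expanding around $x$ gives $(y-x)\cdot\dot A(t,x)$ plus terms controlled by $|y-x|^2$, which are absorbed by the Gaussian window $g(y-x)$. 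The linear term is a first-order symbol $\xi\mapsto \dot A(t,x)\cdot\partial_\xi$ acting on $\mathcal T^{A(t)}u$, so I would incorporate it into the Hamiltonian. In effect I replace $h$ by the modified symbol
\[
\tilde h(t;x,\xi)=h(t;x,\xi)+\dot A(t,x)\cdot\xi\text{-type correction},
\]
equivalently adding an electric field term $-\dot A$ to the Lorentz force in the flow. This corrected symbol still lies in $S^{0,(2)}$ uniformly in $t$, because $\dot A$ grows at most linearly with bounded derivatives. The remainders are operators whose kernels on phase space decay rapidly off the flow, and hence are bounded on $L^p(\mathbb R^{2d})$ with weights.

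With this conjugated equation in hand, I would repeat the construction from the static case. I build an approximate solution operator by transporting $\mathcal{T}^{A(0)}u_0$ along the modified flow, control the error via a Duhamel/Gronwall argument on $L^p(\mathbb{R}^{2d})$ with the $\langle(x,\xi)\rangle^2$-type weight defining $M^{2,p}_{A(t)}$, and obtain the a priori bound. Existence then follows by the same approximation and compactness argument used for Theorem \ref{thm:main}, and uniqueness follows from the a priori estimate applied to differences.

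The main obstacle is that the space $M^{p}_{A(t)}$ itself varies with $t$, so $W^{1,1}([0,T],M^p_{A(t)})$ has to be interpreted through $\mathcal{T}^{A(t)}u(t)\in W^{1,1}([0,T],L^p)$. Controlling the extra term, and especially its weighted $M^{2,p}$ part, requires the uniform bounds on $\dot B$ together with the decay condition \eqref{eq:B(t) bounds}, so that $\dot A$ remains sublinear. I would try first to prove the commutator identity above explicitly via Stokes' theorem on the triangle formed by the points involved in the magnetic phases.
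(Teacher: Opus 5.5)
Your key new computation is the same as the paper's (Lemma~\ref{lem:wavepacket solution with A(t)}): $\partial_t\varphi^{A(t)}(y,x)=(y-x)\cdot\dot A(t;x)+\mathcal{R}_3$, the linear part becomes $-i\dot A(t;x)\cdot\partial_\xi$ and is absorbed into the flow, and $\mathcal{R}_3$ becomes one more error term. Working on the transform side rather than on the wavepackets is just the dual picture.

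The genuine gap is the error control. You assert that the remainders have phase-space kernels decaying off the flow \emph{and hence} are bounded on weighted $L^p(\mathbb{R}^{2d})$, and then run Duhamel/Gronwall. This is false for $\mathcal{R}_1$ in \eqref{def:R_1}. Its kernel is only bounded by $\lambda^{-2}(1+\langle x^t\rangle^{-1-\epsilon}|\xi^t|)$ times off-diagonal decay (Lemma~\ref{lem:G bound}). The growth in $\xi^t$ is genuine: to leading order the multiplier is $\partial_\eta h(x,\xi)$ times a quadratic form in $y-x$ with coefficients $\partial B(x)$. So at each fixed time the error is unbounded on $L^p$, and a Gronwall argument pointwise in $t$ has no finite constant. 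This is exactly the difficulty created by unbounded magnetic potentials.

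The missing ingredient is the Yajima-type averaging Lemma~\ref{lem:t-averaging}, $\int_I\langle x^\tau\rangle^{-1-\epsilon}|\xi^\tau|\,d\tau\le C(1+|I|)$ along each flow line. It can only be exploited if you integrate in time along the flow \emph{before} taking the $L^p$ norm in phase space. That is why the paper solves the Volterra equation \eqref{eq:volterra} by contraction in $\mathcal{M}^{m,p}_{A}(\mathbb{R}^{d},L^{1}([0,T]))$. At $p=1$ and $p=\infty$, using the two kernel bounds of Lemma~\ref{lem:G bound}, the constant is $C(T\lambda^{C_d}+\lambda^{-2})<1$ for $\lambda$ large and $T$ small, and one then interpolates. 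This, and not merely keeping $\dot A$ sublinear, is where $\epsilon>0$ in \eqref{eq:B(t) bounds} is used. Existence then comes from the explicit propagator $\tilde S(t,0)u_0+i\int_0^t\tilde S(t,s)v(s)\,ds$. The paper has no approximation/compactness step, so you would have to supply one (e.g.\ for $p=\infty$).

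Second, the extra force cannot be encoded in a modified symbol $\tilde h$. When $\dot B\neq0$, $\dot A(t;\cdot)$ is not a gradient, so no scalar correction produces it in $\dot\xi$. A correction of type $\dot A\cdot\xi$ would alter $\dot x$, no longer matching the pure $\partial_\xi$-term that $\partial_t\varphi^{A(t)}$ produces. It is also in general not in $S^{0,(2)}$, because $\partial_x^2\dot A\cdot\xi$ is unbounded.

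Your alternative phrasing, adding $\dot A_j(t;x)\partial_{\xi_j}$ to the vector field as in $\tilde H^{B(t)}$, is the correct one. But then Lemma~\ref{lem:flow} and Lemma~\ref{lem:t-averaging} cannot simply be quoted for a new symbol; they must be re-checked for this non-Hamiltonian field. They do survive:
\begin{itemize}
\item the added component is independent of $\xi$, so the divergence still vanishes;
\item $|\dot A(t;x)|\lesssim 1+|x|$ preserves \eqref{eq:t-derivatives of flow} and the Gronwall bounds;
\item $\dot x=\partial_\xi h$ is unchanged, so the convexity argument for $|x^t|^2$ goes through.
\end{itemize}
The new term $\mathcal{R}_3$ is harmless. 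Since $\partial\dot A$ is bounded, $|\mathcal{R}_3|\le C_B\langle y-x^t\rangle^2$ with derivatives obeying similar bounds, so it only contributes to the $T\lambda^{C_d}$ part of the contraction constant.
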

	With the identification $M^{2}_{A(t)} \simeq L^{2}$, the above result can be seen as a generalization of the classical results in Yajima \cite{Yajima_Schrodinger}. 
	
	Our strategy for the proof is inspired by the work of Tataru \cite{Tataru_Phase}, Smith \cite{Smith_Spectral}, Cordero-Gr\"ochenig-Nicola-Rodino \cite{Cordero_Wiener}, Cordero-Nicola-Rodino \cite{Cordero_Rogh} and Cornean-Helffer-Purice \cite{Cornean_Matrix}. The main idea of the strategy is to take a phase space perspective by using the wavepacket transform $\mathcal{T}^{A}$, so that the quantum evolution $u_{0} \mapsto u(t)$ satisfying (\ref{eq:electro-magnetic weyl schrodinger}) can be related to a classical evolution in the phase space, that is a magnetic Hamiltonian flow with initial data $\tilde{u}_{0} := \mathcal{T}^{A}u_{0}$. The flow property is then exploited through a generalized version of Yajima  \cite[Lemma 2.1]{Yajima_Schrodinger}, which only requires the mild decay assumption on $B$ as in \eqref{eq:B decay}. The $A$-adapted space $M^{p}_{A}$ naturally arises from the use of the transform $\mathcal{T}^{A}$, and for $A \equiv 0$ our result recovers the usual modulation space well-posedness results for the non-magnetic Schr\"odinger  equation  with unbounded electric potential $V$.

	\section{Preliminaries}
	
	\subsection{Notation} For the configuration space $\mathbb{R}^{d}$, the phase space is the cotangent bundle $T^{*}\mathbb{R}^{d} \simeq \mathbb{R}^{2d}$. For differential operators, we use the notation $D := -i\partial$, and for a scalar function $f$ on $\mathbb{R}^{d}$, we use $\partial_{k}f(x) := \partial_{x_{k}}f(x)$ where $x := (x_{1},\dots,x_{k},\dots,x_{d})$. If $f$ is time-dependent, we also use the shorthand notation $\dot{f} := df/dt$.  For scalar products of $x,y \in \mathbb{R}^{d}$ we use $x\cdot y$ or $\langle x,y\rangle$. For the $L^{2}$ pairing we denote $\langle f,g\rangle_{L^{2}} := \int \bar{f}g$, with the complex conjugation on the left. 
	
	We use  the Japanese bracket $\langle x\rangle := (1+|x|^{2})^{1/2}$ for $x \in \R^d$, and note  Peetre's inequality, for $s \in \mathbb{R}$,
	$$
	\langle x + y \rangle^{s} \leq 2^{s}\langle x\rangle^{s}\langle y\rangle^{|s|}, 
	\qquad x,y \in \R^d. 
	$$
	For inequalities, we use the notation $\lesssim$ to hide absolute constants, and also the constants that only depend on invariant objects such as the Gaussian $g$. We use subscripts to denote the dependence of relevant objects or parameters, for example $C_{B}$ denotes constants that can be determined from a given magnetic field $B$.
	
	\subsection{Magnetic Potential and Flux}
	
	Let the magnetic field $B$ with components $B_{jk} \in C^{\infty}(\mathbb{R}^{d},\mathbb{R})$  satisfy $\|\partial^{\alpha}B_{jk}\|_\infty \leq C_{\alpha}$,  and let $A(\cdot)$ denote the magnetic vector potential  obtained through the transversal gauge (\ref{def:A}). For fixed $x \in \mathbb{R}^{d}$, we define the vector potential $A(\cdot,x)$  by 
	$$
		A_{j}(y,x) := -\sum_{k=1}^{d}\int_{0}^{1} s(y_{k}-x_{k}) B_{jk}(x+s(y-x))\ ds,\quad y \in \mathbb{R}^{d}.
	$$
	The above potential satisfies $dA(\cdot,x) = B$ and also recovers $A(y)$ by $A(y,0) = A(y)$. The vector-valued function $A(\cdot)-A(\cdot,x)$ is related to the desired gauge-covariant property of the magnetic $\Psi$DOs. Notice we have $dA(\cdot)-dA(\cdot,x)=0$, so $A(\cdot)-A(\cdot,x)$ is in fact a closed 1-form. We can thus find a function $\varphi^{A}$ such that
	\begin{equation}\label{eq:magnetic phase property}
		\partial_{y_{j}}\varphi^{A}(y,x) = A_{j}(y) - A_{j}(y,x), \quad y,x \in \mathbb{R}^{d},\ 1 \leq j \leq d.
	\end{equation}
	A straightforward choice is to integrate both sides above from $x$ to $y$ (cf. \cite{Cornean_Beal} equation (1.4)),
	\begin{equation}\label{def:magnetic phase}
		\varphi^{A}(y, x):=\int_{\Gamma_{x, y}}(A(\cdot)-A(\cdot, x))=\int_{\Gamma_{x, y}} A, \quad x,y \in \mathbb{R}^{d},
	\end{equation}
	where $\Gamma_{x, y}$ denotes the line segment connecting $x$ to $y$. Explicitly, we have
	$$
	\int_{\Gamma_{x, y}} A := \int_{\Gamma_{x, y}} A(\cdot, 0) = (y-x)\cdot \int_{0}^{1} A((1-s)x+sy,0)\ ds,
	$$
	and the $\int_{\Gamma_{x, y}}A(\cdot, x)$ term in (\ref{def:magnetic phase}) vanishes, because $B$ is skew-symmetric, so
	$$
	\int_{\Gamma_{x, y}}A(\cdot, x) = \sum_{j, k=1}^{d} (y_{j}-x_{j}) \int_0^1  \int_{0}^{1} \theta s(y_{k}-x_{k})B_{jk}(x+\theta((1-s)x+sy))\ d\theta ds = 0.
	$$
	From this construction, we see that $\varphi^{A}$ is anti-symmetric in its variables, i.e.
	\begin{equation}\label{eq:phase anti-symmetry}
		\varphi^{A}(y,x) = -\varphi^{A}(x,y),\quad x,y \in \mathbb{R}^{d}.
	\end{equation}
	Because the value of $\varphi^{A}$ equals integrating the vector field $A$ along a line segment, we denote  by  $\Gamma^{B}(y,x,z)$ the flux given by the magnetic field $B$ through the oriented triangle with vertices $y,x,z$.  Using  Stokes' theorem, we have 
	\begin{equation}\label{def:magnetic flux}
		\Gamma^{B}(y,x,z) = \varphi^{A}(y,x)+\varphi^{A}(x,z)+\varphi^{A}(z,y).
	\end{equation}
	 By \cite[Lemma 2.10]{Iftimie_MagneticFIO}, we can formulate a matrix $C(y,x,z) := \left(c_{jk}(y,x,z)\right)_{1\leq j,k\leq d}$ with
	$$
	c_{jk}(y,x,z) := \int_{0}^{1}\int_{0}^{s} B_{jk}(ty+(s-t)x+(1-s)z)\ dtds,\quad (y,x,z) \in \mathbb{R}^{3d},
	$$
	it then allows us to re-write $\Gamma^{B}$ as 
	$$
	\Gamma^{B}(y,x,z) = \langle C(y,x,z)(y-x),(y-z)\rangle.
	$$
	With the above formulation, the following estimates hold by direct calculation, which bounds the derivatives of the magnetic flux by the length of the line segments $|y-x|$ and $|y-z|$. 
	
 \begin{lem}
		For a magnetic field $B$ with $B_{jk} \in C^{\infty}(\mathbb{R}^{d},\mathbb{R})$ and  $\|\partial^{\alpha}B_{jk}\|_\infty \leq C_{\alpha}$, the associated magnetic flux $\Gamma^{B}$ and phase function $e^{i\Gamma^{B}}$ satisfy
		\begin{align}\label{eq:flux bound} \nonumber 
				|\Gamma^{B}(y,x,z)| & \leq C_{B}|y-x||y-z|,\\
				|\partial^{\alpha} e^{i\Gamma^{B}(y,x,z)}| & \leq C_{\alpha,B}(|y-x|+|y-z|)^{|\alpha|}, \qquad |\alpha| \geq 1.
		\end{align}
	\end{lem}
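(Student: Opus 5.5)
The plan is to avoid differentiating the representation $\Gamma^{B}=\langle C(y,x,z)(y-x),(y-z)\rangle$ naively. If a derivative falls on $C$, it produces $|y-x||y-z|$ times a bounded factor, which is quadratic rather than linear in the side lengths. Instead I will compute first derivatives of $\Gamma^{B}$ through the defining property \eqref{eq:magnetic phase property} of $\varphi^{A}$, which gives exact gauge formulas. The first bound $|\Gamma^{B}|\le C_B|y-x||y-z|$ is immediate from the matrix representation, since $|c_{jk}|\le \tfrac12\|B_{jk}\|_\infty$.

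For the gradient, I will use \eqref{def:magnetic flux}, anti-symmetry \eqref{eq:phase anti-symmetry} and \eqref{eq:magnetic phase property}. Differentiating in $y$ gives
$$\partial_{y_j}\Gamma^{B}(y,x,z)=\partial_{y_j}\varphi^{A}(y,x)-\partial_{y_j}\varphi^{A}(y,z)=A_j(y,z)-A_j(y,x),$$
since the $A_j(y)$ terms cancel. From the explicit formula $A_j(y,x)=-\sum_k\int_0^1 s(y_k-x_k)B_{jk}(x+s(y-x))\,ds$ we get $|A(y,x)|\le C_B|y-x|$. Hence $|\nabla_y\Gamma^{B}|\le C_B(|y-x|+|y-z|)$. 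By the cyclic symmetry $\Gamma^{B}(y,x,z)=\Gamma^{B}(x,z,y)=\Gamma^{B}(z,y,x)$, the same computation gives $\nabla_x\Gamma^{B}=A(x,y)-A(x,z)$ and $\nabla_z\Gamma^{B}=A(z,x)-A(z,y)$. These are bounded by $C_B(|x-y|+|x-z|)\le 2C_B(|y-x|+|y-z|)$, and likewise for $z$. This settles $|\alpha|=1$, since $|\partial e^{i\Gamma^B}|=|\partial\Gamma^B|$.

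For $|\alpha|\ge 2$, I will use Faà di Bruno: $\partial^{\alpha}e^{i\Gamma^{B}}=e^{i\Gamma^{B}}\sum\prod_{m}(i\,\partial^{\beta_m}\Gamma^{B})$, summed over partitions $\alpha=\sum_m\beta_m$. Each higher derivative $\partial^{\beta}\Gamma^{B}$ with $|\beta|\ge2$ is a derivative of some $A(\cdot,\cdot)$. Differentiating $A_j(y,x)$ under the integral gives terms $s^{|\gamma|}(y-x)\,\partial^{\gamma}B$, bounded by $C|y-x|$, plus terms $s^{|\gamma|-1}\partial^{\gamma'}B$ with $|\gamma'|=|\gamma|-1$, bounded by constants. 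Here the assumption $\|\partial^\alpha B_{jk}\|_\infty\le C_\alpha$ enters. So $|\partial^{\beta}\Gamma^{B}|\le C_{\beta,B}(1+|y-x|+|y-z|)$. The partition consisting only of first-order pieces then contributes exactly $C(|y-x|+|y-z|)^{|\alpha|}$, which is the leading term in the statement.

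The main obstacle is the remaining partitions, which contain higher-order factors. These contribute products with fewer than $|\alpha|$ length factors, together with bounded factors coming from $B$ and its derivatives. Such terms are of lower order in $r:=|y-x|+|y-z|$. In the regime $r\ge1$ they are absorbed into $C_{\alpha,B}\,r^{|\alpha|}$, which gives the stated estimate there. The delicate point is $r\to0$. There the mixed second derivatives $\partial_{y}\partial_{x}\Gamma^{B}$ reduce to values of $B$ itself, which need not vanish, so $|\partial^{\alpha}e^{i\Gamma^B}|$ need not go to $0$ as $r\to0$. I therefore expect that the estimate, exactly as worded, can only be established for $r\ge1$. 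For small $r$ the homogeneous form can fail whenever $B$ does not vanish. The natural valid replacement there is the bounded-by-a-constant estimate, i.e.\ the statement read with $r^{|\alpha|}$ replaced by $\max(1,r)^{|\alpha|}$. I would verify this carefully on the constant-field example $\Gamma^{B}=\langle B(y-x),(y-z)\rangle$ before committing to the final constant bookkeeping.
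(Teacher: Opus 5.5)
Your analysis is correct, including your objection to the statement as printed. The first bound and the case $|\alpha|=1$ go through exactly as you describe. Already for $|\alpha|=2$ the homogeneous bound fails near the diagonal. For constant $B$ one has $c_{jk}=\tfrac12 B_{jk}$ and $\partial_{y_j}\Gamma^B=\tfrac12\sum_k B_{jk}(z_k-x_k)$. So at $y=x=z$ the mixed derivative $\partial_{x_l}\partial_{y_j}e^{i\Gamma^B}$ has modulus $\tfrac12|B_{jl}|$, while the right-hand side vanishes.

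The paper gives no argument beyond ``direct calculation''. Wherever the lemma is invoked later (the bounds for $W_{B,\lambda}$, and the estimates inside $I_{\lambda,B}(r^{*},g)$), it is used in the bracketed form $C_{\alpha,B}(\langle\lambda(y-x^t)\rangle+\langle\lambda(y-z)\rangle)^{|\alpha|}$. That form follows from your corrected bound $C_{\alpha,B}\max(1,r)^{|\alpha|}$ via $\partial_{\lambda y}=\lambda^{-1}\partial_y$ and $\lambda\ge1$. So what you prove is exactly what the rest of the paper needs.

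Compared with the evident direct calculation from $\Gamma^B=\langle C(y-x),(y-z)\rangle$, your route gives the stated exponent. Differentiating $C$ term by term only yields $|\nabla\Gamma^B|\lesssim r+r^2$, hence $(1+r)^{2|\alpha|}$. That is harmless for the paper's integrations by parts against Gaussians, but it is not the claimed power. Your identity $\partial_y\Gamma^B=A(y,z)-A(y,x)$, together with cyclic symmetry, gives linear growth of the gradient.

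One caution about the check you plan. The paper only ever differentiates in one slot at a time, and the constant-field example does not refute that reading. For constant $B$ the flux is affine in each of $y,x,z$ separately, since the quadratic term in $y$ vanishes by skew-symmetry. Hence all single-slot derivatives of order $\ge2$ vanish, and the homogeneous bound holds there. The failure in that reading needs non-constant $B$. Take $d=2$, $B_{12}=-B_{21}=\sin y_1$, $y=x=(\delta,0)$, $z=0$. Then $\partial_{y_1}\Gamma^B=0$ and $\partial_{y_1}\partial_{y_2}\Gamma^B=\tfrac12\sin\delta-\int_0^1 s\sin(s\delta)\,ds=\tfrac{\delta}{6}+O(\delta^3)$. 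So $|\partial_{y_1}\partial_{y_2}e^{i\Gamma^B}|\approx\delta/6$, which is much larger than $\delta^2=r^2$.

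In general, since $\Gamma^B(\cdot,x,x)\equiv0$, one gets $|\partial_y^\beta\Gamma^B|\lesssim r$ for all $|\beta|\ge1$. This gives $|\partial_y^\alpha e^{i\Gamma^B}|\lesssim r+r^{|\alpha|}$, and the example shows the linear term cannot be dropped.
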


	\section{Magnetic Pseudo-differential Operator}
	
	In this section, we recall the construction of magnetic pseudo-differential operators ($\Psi$DOs) with a given magnetic field $B$  satisfying  $\sup_{x \in \mathbb{R}^{d}}|\partial^{\alpha}B_{jk}(x)| \leq C_{\alpha}$. With  $\varphi^{A}$ constructed in \eqref{def:magnetic phase}, we can now give a definition of magnetic $\Psi$DOs  following the construction in \cite[Section 1]{Iftimie_MagneticPsiDO}),   for general symbols $h$ in the space of tempered distributions $\mathcal{S'}(\mathbb{R}^{2d})$. Since tempered distributions can grow at most polynomially at infinity, the resulting magnetic $\Psi$DOs as integral operators are well-defined for Schwartz functions $\mathcal{S}(\mathbb{R}^{d})$.

	\begin{defi}
		Given a symbol $h \in \mathcal{S'}(\mathbb{R}^{2d})$, the magnetic pseudo-differential operator $\operatorname{Op}^{A}(h) \in \mathcal{L}(\mathcal{S}(\mathbb{R}^{d});\mathcal{S'}(\mathbb{R}^{d}))$ is defined by
		\begin{equation}\label{def:quantization}
			\operatorname{Op}^{A}(h)u(y) := (2\pi)^{-d}\int_{\mathbb{R}^{2d}} e^{i(y-y')\cdot \eta} e^{-i\varphi^{A}(y',y)} h(\tfrac{y+y'}{2},\eta)u(y')\ dy'd\eta, \quad y \in \mathbb{R}^{d},
		\end{equation}
		where   $u \in \calS(\R^d)$. 
	\end{defi}
	
	This mapping of symbols to magnetic $\Psi$DOs $h \mapsto \operatorname{Op}^{A}(h)$ is also called magnetic Weyl quantization. This formulation has the advantage that the magnetic potential $A$ is fully decoupled from the symbol $h$. For comparison, the magnetic Schr\"odinger operator $H^{A}+V$ in (\ref{eq:usual magnetic Schrodinger}) can also be formulated with the usual $\Psi$DO theory, which requires a much complicated symbol 
	$$
	h_{A}(t;y,\eta) := |\eta-A(y)|^{2} + V(t;y)
	$$
	such that
	\begin{equation*}\label{eq:Weyl calculus}
		H^{A} + V = \operatorname{Op}(h_{A}) := (2\pi)^{-d}\int_{\mathbb{R}^{2d}} e^{i(y-y')\cdot \eta} h_{A}(\tfrac{y+y'}{2},\eta)u(y')\ dy'd\eta.
	\end{equation*}
	Furthermore, the magnetic $\Psi$DO formulation enjoys a gauge-covariant property. That is, for $A' = A + dv$ with $v \in C_{\mathrm{pol}}^{\infty}\left(\mathbb{R}^d,\mathbb{R}\right)$ a real-valued smooth function of at most polynomial growth, $A$ and $A'$ are both associated to the magnetic field $B = dA'= dA$. Then the magnetic $\Psi$DOs $ \operatorname{Op}^{A}(h)$ and $\operatorname{Op}^{A'}(h)$ are  unitarily equivalent in $L^{2}$ (cf. \cite[Proposition 3.6]{Mantoiu_Weyl}). Thus this formulation depends more intrinsically on the object $B$, which allows us to formulate Assumption \ref{asmp:general} in terms of the magnetic field $B$ instead of  the magnetic potentials $A$. In particular, the adapted magnetic modulation space $M^{p}_{A}(\mathbb{R}^{d})$ we formulate also depends intrinsically on $B$, which we show in the next section as Proposition \ref{prop:magnetic modulation space}. 
	
	Another important feature of the above  Weyl  quantization is that for real symbols $h$, the operator $\operatorname{Op}^{A}(h)$ is symmetric on $\mathcal{S}(\mathbb{R}^{d})$, thanks to the balanced choice $\frac{y+y'}{2}$ in (\ref{def:quantization}). But this can also make explicit calculations a bit awkward. To simplify calculations, a slightly different quantization is also useful, that is the magnetic Kohn-Nirenberg quantization $\operatorname{Op}_{KN}^{A}$ (or the magnetic ``left" quantization)
	$$
		\operatorname{Op}_{KN}^{A}(h)u(y) := (2\pi)^{-d}\int_{\mathbb{R}^{2d}} e^{i(y-y')\cdot \eta}e^{-i\varphi^{A}(y',y)} h(y,\eta)u(y')\ dy' d \eta, \quad y \in \mathbb{R}^{d}.
	$$
	The Weyl quantization can be converted to the Kohn-Nierenberg quantization by modifying the symbol $h$.  \cite[Proposition 2.13]{Iftimie_MagneticFIO}  provides such a conversion for a symbol class $S^{m}$, and the proof there can be used with obvious changes to give a conversion for our symbol class $S^{0,(2)}(\mathbb{R}^{2d})$.
	\begin{lem}\label{lem:Weyl to KN}
		Given  $a \in S^{0,(2)}(\mathbb{R}^{2d})$, there exists a symbol $b \in S^{0,(2)}(\mathbb{R}^{2d})$ such that $\operatorname{Op}^{A}(a)=\operatorname{Op}_{KN}^{A}(b)$, with
		$$
		b(y, \eta) = a(y,\eta) + a_{r}(y,\eta),\quad a_{r} \in S^{0,(0)}(\mathbb{R}^{2d}).
		$$
\end{lem}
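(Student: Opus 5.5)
The idea is to compare the two integral kernels directly. Both quantizations carry the same magnetic phase $e^{-i\varphi^{A}(y',y)}$, so the phase plays no role in the symbol conversion. Let $K^W_a$ and $K^{KN}_b$ denote the distribution kernels of $\operatorname{Op}^{A}(a)$ and $\operatorname{Op}^{A}_{KN}(b)$. Each has the form $e^{-i\varphi^{A}(y',y)}$ times the non-magnetic Weyl (resp. Kohn--Nirenberg) kernel. The equality $\operatorname{Op}^{A}(a)=\operatorname{Op}^{A}_{KN}(b)$ therefore reduces to the classical identity $\operatorname{Op}(a)=\operatorname{Op}_{KN}(b)$ between kernels. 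I would follow \cite[Proposition 2.13]{Iftimie_MagneticFIO}: in the present gauge-covariant setting the conversion formula is the same as in the non-magnetic case. The standard formula is
$$
b(y,\eta)=e^{\frac{i}{2}D_{z}\cdot D_{\zeta}}a(z,\zeta)\big|_{z=y,\zeta=\eta}
=(\pi)^{-d}\!\int_{\mathbb{R}^{2d}} e^{-2i z\cdot\zeta}\, a(y+z,\eta+\zeta)\,dz\,d\zeta
$$
(up to the sign convention). This follows by Fourier inversion in the variable $y-y'$ after writing $\tfrac{y+y'}{2}=y-\tfrac{y-y'}{2}$.

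The main step is to show that $b\in S^{0,(2)}$ and that $a_r:=b-a\in S^{0,(0)}$. For this I would use Taylor's formula with integral remainder in the oscillatory-integral representation. Write $a(y+z,\eta+\zeta)=a(y,\eta)+\nabla a(y,\eta)\cdot(z,\zeta)+R_2$. The constant term reproduces $a(y,\eta)$, since $\pi^{-d}\int e^{-2iz\cdot\zeta}\,dz\,d\zeta=1$. The linear terms vanish, since $\int e^{-2iz\cdot\zeta}z\,dz\,d\zeta=0$ in the oscillatory sense. The remainder $R_2$ involves only second derivatives $\partial^{\gamma}a$ with $|\gamma|=2$, evaluated at intermediate points, and these are bounded with all their derivatives because $a\in S^{0,(2)}$. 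Equivalently, $a_r=\big(e^{\frac i2 D_z\cdot D_\zeta}-1\big)a$ can be written as $\int_0^1 e^{\frac{i\theta}{2}D_z\cdot D_\zeta}\,\tfrac{i}{2}D_z\cdot D_\zeta a\,d\theta$. Here $D_z\cdot D_\zeta a$ is a sum of mixed second derivatives, so it lies in $S^{0,(0)}$, i.e. it is bounded with all derivatives.

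Next I would invoke the classical fact that $e^{\frac{i\theta}{2}D_z\cdot D_\zeta}$ maps $C_b^\infty(\mathbb{R}^{2d})=S^{0,(0)}$ to itself, uniformly in $\theta\in[0,1]$. The proof is by integrating by parts with $\langle z\rangle^{-2N}(1-\Delta_\zeta)^N$ and $\langle \zeta\rangle^{-2N}(1-\Delta_z)^N$ in the oscillatory integral to gain decay; the $\theta$-scaling $e^{-2iz\cdot\zeta/\theta}$ is handled by the change of variables $z\mapsto\theta z$, which costs nothing. This gives $a_r\in S^{0,(0)}$. Since $S^{0,(0)}\subset S^{0,(2)}$, it also gives $b=a+a_r\in S^{0,(2)}$.

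The expected obstacle is justifying these oscillatory integrals when $a$ grows quadratically, and hence is not in $\mathcal{S}$. I would handle this by working first with $a_\varepsilon=a\,\chi(\varepsilon\cdot)$ for a cutoff $\chi$. This makes the kernel identity rigorous on $\mathcal{S}(\mathbb{R}^d)$. I would then let $\varepsilon\to0$: the symbols $D_z\cdot D_\zeta a_\varepsilon$ are bounded in $S^{0,(0)}$ uniformly in $\varepsilon$, and the operators converge in $\mathcal{L}(\mathcal{S},\mathcal{S}')$, because $a_\varepsilon\to a$ in $\mathcal{S}'(\mathbb{R}^{2d})$.
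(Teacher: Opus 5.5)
Your proposal is correct and follows essentially the same route as the paper, which simply invokes \cite[Proposition 2.13]{Iftimie_MagneticFIO}. The magnetic phase $e^{-i\varphi^{A}(y',y)}$ is common to both quantizations, so the conversion is the non-magnetic one $b=e^{\frac{i}{2}D_y\cdot D_\eta}a$, and the series for $a_r$ displayed after the lemma is just its formal Taylor expansion. Your exact remainder $a_r=\int_0^1 e^{\frac{i\theta}{2}D_y\cdot D_\eta}\,\tfrac{i}{2}(D_y\cdot D_\eta a)\,d\theta$, together with the $\theta$-uniform boundedness of $e^{\frac{i\theta}{2}D_y\cdot D_\eta}$ on $C_b^\infty$, is what actually justifies $a_r\in S^{0,(0)}$. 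The cutoff regularization is harmless but unnecessary, since all the identities already hold in $\mathcal{S}'(\mathbb{R}^{2d})$.
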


By  \cite[Proposition 2.13]{Iftimie_MagneticFIO}, we also have an explicit expression for the symbol $a_{r}$  given by  
	$$
	a_{r}(y,\eta):=\sum_{|\alpha| \geq 1} \frac{1}{|\alpha|!}\left(\frac{1}{2}\right)^{|\alpha|}\left(D_y^\alpha \partial_\eta^\alpha a\right)(y, \eta),  \qquad (y,\eta) \in \R^{2d}.  
	$$
This implies that if the symbol $a$ depends only on $y$ or $\eta$, then the two quantizations $\operatorname{Op}^{A}(a)=\operatorname{Op}_{KN}^{A}(a)$ coincide, because  all  cross-derivatives vanish. In particular, the symbols of the basic magnetic differentiation and multiplication operator
	$$
	\operatorname{P}^{A}_{j}u(y) :=(-i\partial_{y_{j}}-A_{j}(y))u(y),\quad \operatorname{Q}_{j}u(y) := y_{j}u(y)
	$$
	coincide in  the  two quantizations. That is, for $u \in \mathcal{S}(\mathbb{R}^{d})$ we have
	\begin{equation}\label{eq:basic quantization}
		\begin{aligned}
			& \operatorname{P}^{A}_{j}u(y)=\operatorname{Op}^{A}(\eta_{j})u(y)=\operatorname{Op}_{KN}^{A}(\eta_{j})u(y);\\
			& \operatorname{Q}_{j}u(y)=\operatorname{Op}^{A}(y_{j})u(y)=\operatorname{Op}_{KN}^{A}(y_{j})u(y).
		\end{aligned}
	\end{equation}
	This can be shown with direct calculations using Fourier inversion.  We showcase the calculation with $\operatorname{Op}^{A}(\eta_{j})u(y)$, and the other identity is similar. Denote  by  $\mathcal{F}_{y'}$ and $\mathcal{F}^{-1}_{\eta}$ the Fourier transform and inversion regarding the variables $y'$ and $\eta$. We have
	$$
	\begin{aligned}
		\operatorname{Op}^{A}(\eta_{j})u(y) & =\operatorname{Op}_{KN}^{A}(\eta_{j})u(y)\\
		& = (2\pi)^{-d}\int_{\mathbb{R}^{2d}} e^{i(y-y')\cdot \eta} e^{-i\varphi^{A}(y',y)} \eta_{j} u(y')\ dy'd\eta\\
		& = \mathcal{F}_{\eta}^{-1}\mathcal{F}_{y'}\left(-i\partial_{y'_{j}} (e^{-i\varphi^{A}(\cdot,y)} u)\right)(y)\\
		& = (-i\partial_{y_{j}}-A_{j}(y))u(y),
	\end{aligned}
	$$
	where the last equality is based on (\ref{eq:magnetic phase property}) and $\varphi^{A}(y,y) = 0$, that is
	$$
	-i\partial_{y'_{j}}e^{-i\varphi^{A}(y',y)}\big|_{y'=y} = -(A_{j}(y')-A_{j}(y',y))e^{-i\varphi^{A}(y',y)}\big|_{y'=y} = A_{j}(y).
	$$
	
	\section{The Magnetic Wavepacket Transform and Modulation Spaces}
	
	For a given magnetic field $B$ and the associated magnetic potential $A$, we define a family of magnetic wavepackets  using the magnetic phase function $\varphi^{A}$ constructed in (\ref{def:magnetic phase}).
	\begin{defi}
We denote the $L^{2}$-normalized Gaussian by  
$$g(y) := (2\pi)^{-\frac{d}{2}}\pi^{-\frac{d}{4}} e^{-\frac{1}{2}y^{2}}, \qquad y \in \mathbb{R}^{d}.$$ For fixed $\lambda \geq 1$, the family of $\lambda$-rescaled magnetic wavepackets $\{g^{A,\lambda}_{x,\xi}\}_{(x,\xi) \in \mathbb{R}^{2d}}$ are Schwartz functions parameterized by $(x,\xi) \in \mathbb{R}^{2d}$,  defined by  
		\begin{equation}\label{eq:magnetic wavepacket rescaled}
			g^{A,\lambda}_{x,\xi}(y) := \lambda^{\frac{d}{2}}e^{i\xi\cdot (y-x)}g(\lambda(y-x))e^{i\varphi^{A}(y,x)}, \quad y \in \mathbb{R}^{d}.
		\end{equation}
	\end{defi}

By the above definition, for $A \equiv 0$ and $\lambda = 1$, we recover the standard Gaussian wavepackets $\{g_{x,\xi}\}_{(x,\xi) \in \mathbb{R}^{2d}}$,
	$$
	g_{x,\xi}(y) := e^{i\xi\cdot(y-x)}g(y-x) = \pi^{-\frac{d}{4}} e^{i\xi\cdot(y-x)} e^{-\frac{1}{2}(y-x)^{2}},\quad y \in \mathbb{R}^{d},
	$$
	which are translated and modulated Gaussians, centered around $x$ in the configuration space and around $\xi$ in the frequency space. Thus, the magnetic wavepacket transform we define below can be seen as a generalization of the usual wavepacket transform (cf. \cite{Tataru_Phase}, equation (1)).

	\begin{defi}
		For a tempered distribution $u \in \mathcal{S}'(\mathbb{R}^{d})$, its magnetic wavepacket transform $\mathcal{T}_{\lambda}^{A}u \in \mathcal{S}'(\mathbb{R}^{2d})$ is obtained by integrating against wavepackets $g^{A,\lambda}_{x,\xi}$ defined in (\ref{eq:magnetic wavepacket rescaled}), i.e., 
		$$
			\begin{aligned}
				\mathcal{T}_{\lambda}^{A}u(x,\xi) & := \int_{\mathbb{R}^{d}} \overline{g^{A,\lambda}_{x,\xi}}(y)u(y)\ dy\\
				& = \int_{\mathbb{R}^{d}} \lambda^{\frac{d}{2}}e^{-i\xi\cdot (y-x)}g(\lambda(y-x))e^{-i\varphi^{A}(y,x)} u(y)\ dy, \quad (x,\xi) \in \mathbb{R}^{2d}.
			\end{aligned}
		$$
	\end{defi}
	
	This magnetic wavepacket transform $\mathcal{T}_{\lambda}^{A}$ is an example of  a  phase space transform, since it takes a function (distribution) on the configuration space $\mathbb{R}^{d}$ to the phase space $T^{*}\mathbb{R}^{d} \simeq \mathbb{R}^{2d}$. We also define a formal adjoint $\mathcal{T}_{\lambda}^{A*}$ acting on phase space distributions $\tilde{u} \in \mathcal{S}'(\mathbb{R}^{2d})$ by 
	$$
		\mathcal{T}_{\lambda}^{A*}\tilde{u}(y) := \int_{\mathbb{R}^{2d}} g^{A,\lambda}_{x,\xi}(y) \tilde{u}(x,\xi)\ dx d\xi.
	$$
	
	\begin{lem}\label{lem:wavepacket transform}
		The magnetic wavepacket transform $\mathcal{T}_{\lambda}^{A}$ and its formal adjoint $\mathcal{T}_{\lambda}^{A*}$ have the following properties:
\begin{enumerate}[(i)]
			\item If $ u \in \mathcal{S}(\mathbb{R}^d)$, then $\mathcal{T}^{A}_{\lambda}u \in \mathcal{S}(\mathbb{R}^{2 d})$. If $\tilde{u} \in \mathcal{S}(\mathbb{R}^{2d})$ then $\mathcal{T}_{\lambda}^{A*}\tilde{u} \in \mathcal{S}(\mathbb{R}^{d})$.
			\item $\mathcal{T}_{\lambda}^{A}, \mathcal{T}_{\lambda}^{A*}$ are $L^{2}$ isometries.
			\item For $u \in \mathcal{S}(\mathbb{R}^{d})$, we have the inversion formula  $u = \mathcal{T}_{\lambda}^{A*}\mathcal{T}_{\lambda}^{A}u$. 
\end{enumerate}
	\end{lem}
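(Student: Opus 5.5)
The plan is to exploit that, for fixed $x$, the magnetic phase $e^{-i\varphi^{A}(y,x)}$ is a unimodular smooth function of $y$. The transform $\mathcal{T}^{A}_{\lambda}$ is then an ordinary Fourier transform in $\xi$ of a localized function. The $L^{2}$ statements reduce to Plancherel, and the magnetic phase plays a role only in the Schwartz-class bookkeeping of (i).

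\emph{Step 1: estimates on $\varphi^{A}$.} Since the $B_{jk}$ have bounded derivatives of all orders, the transversal gauge \eqref{def:A} gives $|A(y)|\lesssim_{B}\langle y\rangle$ and $|\partial^{\alpha}A|\leq C_{\alpha}$ for $|\alpha|\geq 1$. Using the explicit formula
$$\varphi^{A}(y,x)=(y-x)\cdot\int_{0}^{1}A((1-s)x+sy)\,ds,$$
we get for all multi-indices
$$|\partial^{\alpha}_{y}\partial^{\beta}_{x}\varphi^{A}(y,x)|\leq C_{\alpha,\beta}\langle x\rangle^{N}\langle y\rangle^{N}$$
for some $N=N(\alpha,\beta)$. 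Consequently every derivative of $e^{\pm i\varphi^{A}(y,x)}$ is bounded by a polynomial in $(x,y)$.

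\emph{Step 2: proof of (ii) for $\mathcal{T}^{A}_{\lambda}$, and (iii).} For $u\in\mathcal{S}(\mathbb{R}^{d})$ and fixed $x$, set
$$G_{x}(y):=\lambda^{d/2}g(\lambda(y-x))e^{-i\varphi^{A}(y,x)}u(y).$$
Then $\mathcal{T}^{A}_{\lambda}u(x,\xi)=e^{i\xi\cdot x}\,\widehat{G_{x}}(\xi)$, with the convention $\widehat{f}(\xi)=\int e^{-i\xi\cdot y}f(y)\,dy$. Plancherel in $\xi$ gives
$$\int|\mathcal{T}^{A}_{\lambda}u(x,\xi)|^{2}\,d\xi=(2\pi)^{d}\int\lambda^{d}|g(\lambda(y-x))|^{2}|u(y)|^{2}\,dy,$$
because $|e^{-i\varphi^{A}}|=1$; this is exactly where the magnetic potential disappears. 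Integrating in $x$ and using Fubini together with $\|g\|_{2}^{2}=(2\pi)^{-d}$ (from the chosen normalization) yields $\|\mathcal{T}^{A}_{\lambda}u\|_{L^{2}}=\|u\|_{L^{2}}$. By density, $\mathcal{T}^{A}_{\lambda}$ extends to an isometry on $L^{2}$.

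Next, Fubini on Schwartz functions shows $\langle\mathcal{T}^{A}_{\lambda}u,\tilde u\rangle_{L^{2}(\mathbb{R}^{2d})}=\langle u,\mathcal{T}^{A*}_{\lambda}\tilde u\rangle_{L^{2}(\mathbb{R}^{d})}$, so $\mathcal{T}^{A*}_{\lambda}$ is the Hilbert adjoint of $\mathcal{T}^{A}_{\lambda}$. Polarizing the isometry identity gives $\mathcal{T}^{A*}_{\lambda}\mathcal{T}^{A}_{\lambda}=I$, which is the inversion formula (iii). For $\mathcal{T}^{A*}_{\lambda}$ I would prove the adjoint statement this yields: $\|\mathcal{T}^{A*}_{\lambda}\tilde u\|_{2}\leq\|\tilde u\|_{2}$, with equality on the range of $\mathcal{T}^{A}_{\lambda}$. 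Here $\mathcal{T}^{A}_{\lambda}\mathcal{T}^{A*}_{\lambda}$ is the orthogonal projection onto that range. This is the sense in which (ii) holds for $\mathcal{T}^{A*}_{\lambda}$: it is a coisometry, i.e. an isometry on $\operatorname{Ran}\mathcal{T}^{A}_{\lambda}$.

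\emph{Step 3: proof of (i).} To bound $x^{\gamma}\xi^{\beta}\partial_{x}^{\mu}\partial_{\xi}^{\nu}\mathcal{T}^{A}_{\lambda}u$:
\begin{itemize}
\item $\partial_{\xi}$ derivatives produce factors $(y-x)$, which the Gaussian absorbs.
\item Factors $\xi^{\beta}$ are removed by integrating by parts in $y$ via $\xi_{j}e^{-i\xi\cdot(y-x)}=i\partial_{y_{j}}e^{-i\xi\cdot(y-x)}$. The resulting $y$-derivatives fall on $g$, $u$ or $e^{-i\varphi^{A}}$, and by Step 1 each gives at most polynomial factors.
\item $\partial_{x}$ derivatives are handled the same way, again producing only polynomial factors.
\item Weights $x^{\gamma}$ are split via $x=y-(y-x)$, so all polynomial factors become powers of $\langle y\rangle$ and $\langle y-x\rangle$. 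These are absorbed by the Schwartz decay of $u$ and of the Gaussian respectively.
\end{itemize}
This gives uniform bounds on every seminorm, so $\mathcal{T}^{A}_{\lambda}u\in\mathcal{S}(\mathbb{R}^{2d})$.

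For $\mathcal{T}^{A*}_{\lambda}\tilde u$, the $y$-derivatives hit $e^{i\xi\cdot(y-x)}$ (producing $\xi$-factors, absorbed by $\tilde u$), the Gaussian, or the phase (polynomial factors). Weights $\langle y\rangle^{k}\leq 2^{k}\langle x\rangle^{k}\langle y-x\rangle^{k}$ by Peetre's inequality are absorbed by the decay of $\tilde u$ in $x$ and of the Gaussian.

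I expect the main obstacle to be the bookkeeping in Step 3: making sure the polynomial growth of $\varphi^{A}$ in both variables (the linear growth of $A$) is always paired with an available decay factor. Splitting weights through $y-x$ and Peetre's inequality should handle this.
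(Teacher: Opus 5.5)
Your proof is correct. For (i) and for the isometry of $\mathcal{T}^{A}_{\lambda}$ it is the standard argument the paper only alludes to (``direct calculation, or as in the non-magnetic case''). For (iii) you take a different route.

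The paper computes $\mathcal{T}^{A*}_{\lambda}\mathcal{T}^{A}_{\lambda}u$ directly. It reads the $\xi$-integral as $(2\pi)^{d}\delta(y-y')$ and regroups the phases as $e^{i\Gamma^{B}(y,x,y')}e^{-i\varphi^{A}(y',y)}$, which equals $1$ on the diagonal. The Gaussian integral $\lambda^{d}\int g(\lambda(y-x))^{2}\,dx=(2\pi)^{-d}$ then finishes it. So the factor in the paper's display should be $(2\pi)^{d}$, not $(2\pi)^{d/2}$, and your normalization bookkeeping is the consistent one.

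You instead get the isometry from Plancherel in $\xi$ at fixed $x$. You then identify $\mathcal{T}^{A*}_{\lambda}$ with the Hilbert adjoint via Fubini and deduce $\mathcal{T}^{A*}_{\lambda}\mathcal{T}^{A}_{\lambda}=I$ by polarization. Your route:
\begin{itemize}
\item avoids giving meaning to the oscillatory $\delta$-integral;
\item makes clear that the $L^{2}$ theory does not see $A$ at all, because the phase is unimodular;
\item also shows that $\mathcal{T}^{A}_{\lambda}\mathcal{T}^{A*}_{\lambda}$ is the orthogonal projection onto the range.
\end{itemize}
The paper's computation is more explicit. It exhibits the flux $\Gamma^{B}$ vanishing on the diagonal, which is the same mechanism used later for the kernels $G^{A,\lambda}$ and $G_{K}$.

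Your treatment of the adjoint in (ii) is also the right one: $\mathcal{T}^{A*}_{\lambda}$ cannot be an isometry on all of $L^{2}(\mathbb{R}^{2d})$. Every $F=\mathcal{T}^{A}_{\lambda}u$ satisfies $|F(x,\xi)|\leq\|g^{A,\lambda}_{x,\xi}\|_{2}\|u\|_{2}=(2\pi)^{-d/2}\|F\|_{2}$. Hence the range is a proper closed subspace, and $\ker\mathcal{T}^{A*}_{\lambda}=(\operatorname{Ran}\mathcal{T}^{A}_{\lambda})^{\perp}\neq\{0\}$. The coisometry statement you prove is what (ii) can correctly assert.

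One minor point in Step 1: the bound $|\partial^{\alpha}A|\leq C_{\alpha}$ for $|\alpha|\geq 1$ needs the decay \eqref{eq:B decay}. With only bounded derivatives of $B$, the transversal gauge gives just $|\partial^{\alpha}A(y)|\lesssim\langle y\rangle$. Polynomial bounds are all your argument uses, so nothing breaks.
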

	The first two properties can be checked by direct calculation, or proved similarly as in the non-magnetic case, such as Proposition 1.82 in \cite{Benyi_Modulation} (also cf. Proposition 3.3 in \cite{Mantoiu_Modulation}). The inversion formula can be obtained by
	$$
		\begin{aligned}
			\mathcal{T}_{\lambda}^{A*}\mathcal{T}_{\lambda}^{A}u(y) & = \lambda^{d} \int_{\mathbb{R}^{3d}} e^{i\xi\cdot (y-y')}g(\lambda(y-x))g(\lambda(y'-x))e^{i\varphi^{A}(y,x)}e^{-i\varphi^{A}(y',x)} u(y')\ dy'dxd\xi\\
			& = \lambda^{d}(2\pi)^{\frac{d}{2}}\int_{\mathbb{R}^{2d}} \delta(y-y') g(\lambda(y-x))g(\lambda(y'-x))e^{i\Gamma^{B}(y,x,y')}e^{-i\varphi^{A}(y',y)} u(y')\ dy'dx\\
			& = \lambda^{d}(2\pi)^{\frac{d}{2}} u(y) \int_{\mathbb{R}^{d}} g(\lambda(y-x))^{2}\ dx\\
			& = u(y),
		\end{aligned}
	$$
	where we used the evaluations
	$$
	\Gamma^{B}(y,x,y')\big|_{y'=y} = \varphi^{A}(y,x) + \varphi^{A}(x,y)  = 0,\quad e^{-i\varphi^{A}(y',y)}\big|_{y'=y} = 0.
	$$
	From the above calculation we can also see the nice interaction between two wavepackets.

	We then define the magnetic modulation space $M_{A}^{p}(\mathbb{R}^{d})$ adapted to the magnetic potential $A$ to be the space of functions whose $\mathcal{T}^{A}$ transform has  $L^p_{x,\xi}$  integrability.  
	
	\begin{defi}
		For $1 \leq p < \infty$, we define the magnetic modulation space $M_{A}^{p}(\mathbb{R}^{d})$ by
		$$
		M_{A}^{p}(\mathbb{R}^{d}) := \left\{u \in \mathcal{S}'(\mathbb{R}^{d}) : \|u\|_{M_{A}^{p}(\mathbb{R}^{d})} := \left(\int_{\mathbb{R}^{2d}}|\mathcal{T}_{\lambda}^{A}u(x,\xi)|^{p}\ dxd\xi\right)^{1/p} < \infty \right\}.
		$$
		For $p=\infty$, the integral should be replaced by $L^{\infty}$ norm and the space $M_{A}^{p}(\mathbb{R}^{d})$ is defined as the closure of the Schwartz space with respect to the norm.
	\end{defi}
	
	By this definition, when $A \equiv 0$ we recover the usual modulation space $M^{p,p}(\mathbb{R}^{d})$. We note it is also possible to generalize by replacing $L^p_{x,\xi}$ with $L^p_{x}L^{q}_{\xi}$ integrability, which then align with the non-magnetic modulation space $M^{p,q}(\mathbb{R}^{d})$, but for this article we only focus on the case $p=q$. General information and properties of the non-magnetic $M^{p,q}(\mathbb{R}^{d})$ spaces can be found in \cite{Grochenig_Foundations} or Chapter 2 in \cite{Benyi_Modulation}. Similar to the non-magnetic case, We can prove analogously the following properties for $M_{A}^{p}(\mathbb{R}^{d})$.
	
	\begin{prop}\label{prop:magnetic modulation space}
		 Let $1\leq p \leq \infty$. We have the following.   
		\begin{enumerate}[(i)]
			\item $M^{p}_{A}(\mathbb{R}^{d})$ is independent of the choice $\lambda \geq 1$.
			\item $M^{p}_{A}(\mathbb{R}^{d})$ is a Banach space.
			\item $\mathcal{S}(\mathbb{R}^{d})$ is dense in $M^{p}_{A}(\mathbb{R}^{d})$.
			\item For $u \in M^{p}_{A}(\mathbb{R}^{d})$, we have $u = \mathcal{T}_{\lambda}^{A*}\mathcal{T}_{\lambda}^{A}u$. 
		\end{enumerate}
		In particular, for $p = 2$ we have $M^{2}_{A}(\mathbb{R}^{d}) \simeq L^{2}(\mathbb{R}^{d})$ isometrically.
	\end{prop}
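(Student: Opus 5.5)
The natural route is to reduce everything to a reproducing-kernel estimate on phase space, as in the non-magnetic theory of \cite{Grochenig_Foundations} and \cite{Benyi_Modulation}, but with the magnetic phases tracked through the flux $\Gamma^{B}$. For $\lambda,\mu \geq 1$, Lemma \ref{lem:wavepacket transform}(iii) gives on $\mathcal{S}(\mathbb{R}^{d})$ the identity
$$
\mathcal{T}^{A}_{\mu}u = \mathcal{T}^{A}_{\mu}\mathcal{T}^{A*}_{\lambda}\mathcal{T}^{A}_{\lambda}u .
$$
So the whole proof rests on the integral operator $\mathcal{T}^{A}_{\mu}\mathcal{T}^{A*}_{\lambda}$, whose kernel is
$$
K(x,\xi;x',\xi') = \langle g^{A,\mu}_{x,\xi}, g^{A,\lambda}_{x',\xi'}\rangle_{L^{2}}.
$$
The central step is to show that $|K| \leq F(x-x',\xi-\xi')$ for some $F \in L^{1}(\mathbb{R}^{2d})$, with constants depending on $\lambda,\mu,B$.

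Writing the kernel out, the two phases combine as
$$
-\varphi^{A}(y,x)+\varphi^{A}(y,x') = \Gamma^{B}(y,x',x) + \varphi^{A}(x',x) .
$$
The term $\varphi^{A}(x',x)$ does not depend on $y$ and only contributes a unimodular factor. What remains is an oscillatory integral:
$$
|K| = \Bigl|\int e^{-i(\xi-\xi')\cdot y}\, e^{i\Gamma^{B}(y,x',x)}\,\mu^{d/2}g(\mu(y-x))\,\lambda^{d/2}g(\lambda(y-x'))\,dy\Bigr| .
$$
The Gaussians give decay $e^{-c|x-x'|^{2}}$. For decay in $\xi-\xi'$, I integrate by parts with $(1+|\xi-\xi'|^{2})^{-1}(1-\Delta_{y})$, say $N$ times. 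By the flux estimate \eqref{eq:flux bound}, derivatives of $e^{i\Gamma^{B}}$ cost at most powers of $|y-x|+|y-x'|$, and the Gaussians absorb these. The outcome is
$$
|K| \lesssim_{N} \langle x-x'\rangle^{-N}\langle \xi-\xi'\rangle^{-N}.
$$
This step is the main obstacle: the magnetic phase breaks the exact convolution structure of the non-magnetic kernel, and the flux lemma is precisely the tool that restores it up to harmless polynomial factors.

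With this bound in hand, Schur's test (or Young's inequality) shows that $\mathcal{T}^{A}_{\mu}\mathcal{T}^{A*}_{\lambda}$ is bounded on $L^{p}(\mathbb{R}^{2d})$ for every $1\leq p\leq\infty$. Hence
$$
\|\mathcal{T}^{A}_{\mu}u\|_{L^{p}} \lesssim \|\mathcal{T}^{A}_{\lambda}u\|_{L^{p}}
$$
on Schwartz functions. This extends to all of $M^{p}_{A}$ by the density argument below, or by a weak-$*$ pairing, and by symmetry in $\lambda,\mu$ it gives (i). The same kernel bound with $\lambda=\mu$ shows $P := \mathcal{T}^{A}_{\lambda}\mathcal{T}^{A*}_{\lambda}$ is a bounded projection on $L^{p}$. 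Its range on $\mathcal{T}^{A}_{\lambda}(\mathcal{S}')$ consists exactly of transforms of distributions, since $\mathcal{T}^{A*}_{\lambda}$ maps $L^{p}$ into $\mathcal{S}'$; this follows by duality from Lemma \ref{lem:wavepacket transform}(i) together with polynomial bounds on the wavepackets.

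For (iv), extend the inversion formula of Lemma \ref{lem:wavepacket transform}(iii) to $\mathcal{S}'$ by duality, using that $\mathcal{T}^{A}_{\lambda}$ and $\mathcal{T}^{A*}_{\lambda}$ are adjoint on Schwartz spaces. For (ii), take a Cauchy sequence $u_{n}$. Then $\mathcal{T}^{A}_{\lambda}u_{n}\to F$ in $L^{p}$, and I set $u := \mathcal{T}^{A*}_{\lambda}F$. By (iv), $\mathcal{T}^{A}_{\lambda}u = PF = \lim P\mathcal{T}^{A}_{\lambda}u_{n} = F$, so $u_{n}\to u$ in $M^{p}_{A}$.

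For (iii) with $p<\infty$, approximate $\mathcal{T}^{A}_{\lambda}u$ in $L^{p}$ by functions $F_{k}\in C_{c}^{\infty}(\mathbb{R}^{2d})$ and put $u_{k} := \mathcal{T}^{A*}_{\lambda}F_{k} \in \mathcal{S}$, using Lemma \ref{lem:wavepacket transform}(i). Then
$$
\|u-u_{k}\|_{M^{p}_{A}} = \|P(\mathcal{T}^{A}_{\lambda}u - F_{k})\|_{L^{p}} \to 0 .
$$
For $p=\infty$, (iii) holds by definition. Finally, the case $p=2$ is Lemma \ref{lem:wavepacket transform}(ii).
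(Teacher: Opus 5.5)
Your proposal is correct and follows the route the paper intends. The paper gives no separate proof beyond saying the properties follow as in the non-magnetic theory. Your key step is the off-diagonal decay of $\langle g^{A,\mu}_{x,\xi}, g^{A,\lambda}_{x',\xi'}\rangle_{L^{2}}$, obtained via the flux factor $e^{i\Gamma^{B}}$ and integration by parts; this is the same estimate the paper carries out for the kernel $G^{A,\lambda}$ in the proof of Proposition \ref{prop:parametrix}, and the Schur and projection arguments built on it are the standard ones.

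There is one harmless sign slip. The $y$-independent phase should be $-\varphi^{A}(x',x)$, not $+\varphi^{A}(x',x)$, and since that factor is unimodular it does not affect any bound.
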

	
	Up to now, we have used the magnetic vector potential $A$ from the transversal gauge (\ref{def:A}) to define the wavepacket transform and the modulation spaces. Here, we remark on the general magnetic potentials $A'(y) := A(y) + \partial_{y}v(y)$. By replacing $A$ with $A'$, we can define accordingly $\mathcal{T}^{A'}$ and $M^{p}_{A'}(\mathbb{R}^{d})$, which actually leads to the same space.
	
	\begin{prop}
		For $v \in C_{\mathrm{pol}}^{\infty}\left(\mathbb{R}^d,\mathbb{R}\right)$ and $A'(y) := A(y) + \partial_{y}v(y)$, which satisfies $dA'=dA$, we have $M^{p}_{A'}(\mathbb{R}^{d}) \simeq M^{p}_{A}(\mathbb{R}^{d})$ isometrically.
	\end{prop}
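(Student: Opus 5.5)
The plan is to write down the isometry explicitly: it is multiplication by the unimodular gauge factor $e^{-iv}$, i.e.\ $U u := e^{-iv}u$. Everything reduces to how the magnetic phase changes under the gauge transformation.

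\emph{Step 1: the phase.} Following \eqref{def:magnetic phase}, the phase associated to $A'$ is $\varphi^{A'}(y,x) = \int_{\Gamma_{x,y}} A'$. Since $A' = A + \partial_y v$, the fundamental theorem of calculus along the segment $\Gamma_{x,y}$ gives
$$
\varphi^{A'}(y,x) = \varphi^{A}(y,x) + \int_{\Gamma_{x,y}} dv = \varphi^{A}(y,x) + v(y) - v(x), \qquad x,y\in\R^d .
$$
One small point needs checking. The defining relation \eqref{eq:magnetic phase property} involves $A'(\cdot,x)$. This is built from $B$ alone, so $A'(\cdot,x)=A(\cdot,x)$, and its line integral vanishes as before. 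Hence the line-integral formula is the correct definition of $\varphi^{A'}$.

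\emph{Step 2: the transforms.} Insert Step 1 into the definition of $\mathcal{T}^{A'}_\lambda$. For $u\in\mathcal S(\R^d)$ this gives
$$
\mathcal{T}^{A'}_{\lambda}u(x,\xi) = e^{iv(x)}\int_{\R^d} \lambda^{\frac d2} e^{-i\xi\cdot(y-x)} g(\lambda(y-x)) e^{-i\varphi^A(y,x)} e^{-iv(y)}u(y)\,dy = e^{iv(x)}\,\mathcal{T}^{A}_{\lambda}(e^{-iv}u)(x,\xi).
$$
Taking absolute values yields $|\mathcal{T}^{A'}_\lambda u| = |\mathcal{T}^{A}_\lambda (Uu)|$ pointwise on $\R^{2d}$. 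Therefore $\|u\|_{M^p_{A'}} = \|Uu\|_{M^p_A}$ for every $1\le p\le\infty$.

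\emph{Step 3: extension to distributions and bijectivity.} Since $v\in C^\infty_{\mathrm{pol}}$, every derivative of $e^{\pm iv}$ is polynomially bounded. So $e^{\pm iv}$ is a multiplier on $\mathcal S(\R^d)$, and by duality on $\mathcal S'(\R^d)$, continuously and with inverse $U^{-1}u = e^{iv}u$. The identity of Step 2 then extends to $u\in\mathcal S'(\R^d)$. This is justified by writing $\mathcal{T}^{A'}_\lambda u(x,\xi) = \langle \overline{g^{A',\lambda}_{x,\xi}}, u\rangle$ and observing that $g^{A',\lambda}_{x,\xi} = e^{-iv(x)} e^{iv}\, g^{A,\lambda}_{x,\xi}$, which is again a Step 1 computation.

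Consequently $U$ maps $M^p_{A'}$ isometrically onto $M^p_A$ for $1\le p<\infty$. For $p=\infty$, where the space is defined as a closure of $\mathcal S$, we additionally use that $U$ maps $\mathcal S$ bijectively onto $\mathcal S$ and is norm-preserving. Hence it carries the closure for $A'$ onto the closure for $A$.

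I expect no real obstacle. The only delicate points are the bookkeeping of the phase $\varphi^{A'}$ (Step 1) and the justification that $e^{-iv}$ acts on $\mathcal S'$ (Step 3). The latter is exactly why the hypothesis $v\in C^\infty_{\mathrm{pol}}$ is imposed.
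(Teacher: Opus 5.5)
Your proof is correct and follows the paper's argument. The paper uses the same gauge identity $\varphi^{A'}(y,x)=\varphi^{A}(y,x)+v(y)-v(x)$ and the map $u\mapsto e^{iv}u$ from $M^p_A$ to $M^p_{A'}$, which is the inverse of your $U$, to obtain $\mathcal{T}^{A'}(e^{iv}u)=e^{iv(x)}\mathcal{T}^{A}u$. Your Step 3 adds details the paper leaves implicit: it justifies the action of $e^{\pm iv}$ on $\mathcal{S}'$ via polynomial bounds on all derivatives of $v$, and it handles the $p=\infty$ closure.
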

	
	\begin{proof}
		Notice that $A'(y) := A(y) + \partial_{y}v(y)$ gives
		$$
		\varphi^{A'}(y,x) = \int_{\Gamma_{x, y}} A' = \int_{\Gamma_{x, y}} (A + \partial v) = \varphi^{A}(y,x) + v(y) - v(x).
		$$
		Consider the injective map  $u \mapsto e^{iv}u$  for $u \in M^{p}_{A}(\mathbb{R}^{d})$. Then for  $w:= e^{iv} u$, we have 
		$$
		\mathcal{T}^{A'}w(x,\xi) = \int e^{-i \xi \cdot(y-x)} g(y-x) e^{-i \varphi^{A'}(y, x)} w(y)\ d y = e^{iv(x)}\mathcal{T}^{A}u(x,\xi),
		$$
		so $w \in M^{p}_{A'}(\mathbb{R}^{d})$ with $\|w\|_{M^{p}_{A'}(\mathbb{R}^{d})} = \|u\|_{M^{p}_{A}(\mathbb{R}^{d})}$.  The map $u \mapsto e^{iv}u$ is also surjective, since  for any $w' \in M^{p}_{A'}(\mathbb{R}^{d})$, we can take $u' := e^{-iv}w'$, and by a similar calculation as above we have $u' \in M^{p}_{A}(\mathbb{R}^{d})$. 
	\end{proof}
	
	 We also introduce the weighted version $M^{m,p}_{A}(\mathbb{R}^{d})$.
	\begin{defi}
		Let $1 \leq p \leq \infty$ and $m \in \mathbb{R}$. We denote the standard weights $\nu_{m}(x,\xi) := (1+|x|^{2}+|\xi|^{2})^{m/2}$ and the re-scaled weights $\nu_{\lambda,m}(x,\xi) := (1+|\lambda^{-1}x|^{2}+|\lambda^{-1}\xi|^{2})^{m/2}$. The weighted magnetic modulation space $M_{A}^{m,p}(\mathbb{R}^{d})$ is defined by
		$$
		M_{A}^{m,p}(\mathbb{R}^{d}) := \left\{u \in \mathcal{S}'(\mathbb{R}^{d}) : \|u\|_{M_{A}^{m,p}(\mathbb{R}^{d})} := \|\nu_{\lambda,m}\mathcal{T}^{A}_{\lambda}u\|_{L^{p}(\mathbb{R}^{2d})} < \infty \right\},
		$$
		where the case $p=\infty$ is also understood as the closure of the Schwartz space with respect to the associated norm.
	\end{defi}
	
	 By the above definition, $M^{m,p}_{A}(\mathbb{R}^{d})$ is essentially a weighted $L^{p}$ space from a phase space perspective, so it also enjoys properties analogous to the ones in Proposition \ref{prop:magnetic modulation space}. For $m_{1} \geq m_{0}$, we have naturally $M^{m_{1},p}_{A}(\mathbb{R}^{d}) \subseteq M^{m_{0},p}_{A}(\mathbb{R}^{d})$.  For $p=2$, by the isomorphism $M^{0,2}_{A}(\mathbb{R}^{d}) \simeq L^{2}(\mathbb{R}^{d})$, it is easy to see for $k \in \mathbb{N}$, the space $M^{k,2}_{A}(\mathbb{R}^{d})$ contains the $L^{2}$-based weighted Sobolev space $\Sigma(k)$ used in Yajima \cite{Yajima_Schrodinger}
	$$
	\Sigma(k) = \{u \in L^{2}(\mathbb{R}^{d}) : \Sigma_{|\alpha|+|\beta|\leq k} \|x^{\alpha}\partial_{x}^{\beta}u\|^{2}_{L^{2}(\mathbb{R}^{d})} = \|u\|^{2}_{\Sigma(k)} < \infty\}.
	$$

	Moreover, for our generalized magnetic Schr\"odinger operator $\operatorname{Op}^{A}(h)$ with symbol $h$ satisfying Assumption \ref{asmp:h}, the space $M^{2,p}_{A}$ is a natural domain for $\operatorname{Op}^{A}(h)$ in $M^{p}_{A}$.

	\begin{lem}\label{lem:Op^A boundedness}
		Let $1 \leq p \leq \infty, m \in \mathbb{R}$, and let symbol $h$ satisfies the Assumption \ref{asmp:h}. Then the magnetic pseudo-differential operator $\operatorname{Op}^{A}(h): M^{m,p}_{A}(\mathbb{R}^{d}) \to M^{m-2,p}_{A}(\mathbb{R}^{d})$ is bounded.
	\end{lem}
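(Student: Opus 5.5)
We plan to conjugate $\operatorname{Op}^{A}(h)$ by the wavepacket transform and reduce the claim to a weighted Schur test in phase space. By Proposition~\ref{prop:magnetic modulation space}(iv) and density of $\mathcal{S}(\mathbb{R}^{d})$, it suffices to show that the integral operator
$$
K := \mathcal{T}^{A}_{\lambda}\operatorname{Op}^{A}(h)\mathcal{T}^{A*}_{\lambda}, \qquad K\tilde u(x,\xi)=\int_{\mathbb{R}^{2d}} k(x,\xi;x',\xi')\,\tilde u(x',\xi')\,dx'd\xi',
$$
with kernel $k(x,\xi;x',\xi')=\langle g^{A,\lambda}_{x,\xi},\operatorname{Op}^{A}(h)g^{A,\lambda}_{x',\xi'}\rangle_{L^{2}}$, maps $\nu_{\lambda,m}L^{p}$ into $\nu_{\lambda,m-2}L^{p}$. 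By the Schur test, valid for all $1\le p\le\infty$, it is then enough to prove
$$
\sup_{x,\xi}\int \frac{\nu_{\lambda,m-2}(x,\xi)}{\nu_{\lambda,m}(x',\xi')}|k|\,dx'd\xi' + \sup_{x',\xi'}\int \frac{\nu_{\lambda,m-2}(x,\xi)}{\nu_{\lambda,m}(x',\xi')}|k|\,dxd\xi<\infty .
$$
We fix $\lambda=1$, which is allowed by Proposition~\ref{prop:magnetic modulation space}(i).

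To estimate the kernel, we first pass to the Kohn--Nirenberg form via Lemma~\ref{lem:Weyl to KN}. We then write $b=b_{0}+b_{1}+b_{2}$, where $b_{0}+b_{1}$ is the second-order Taylor polynomial of $b$ at $(x',\xi')$ and $b_2$ is the remainder, which by $b\in S^{0,(2)}$ is bounded by $C\,(|y-x'|+|\eta-\xi'|)^{2}$. Let $p_{j}$ denote the operators $\operatorname{P}^{A}_{j}$, $\operatorname{Q}_{j}$ from \eqref{eq:basic quantization}. The polynomial part is then a quadratic expression in $\operatorname{P}^{A}_{j}-\xi'_{j}$ and $\operatorname{Q}_{j}-x'_{j}$, plus a zeroth-order term $b(x',\xi')$ and linear terms with coefficients $\partial b(x',\xi')$. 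Here $|b(x',\xi')|\lesssim\nu_{2}(x',\xi')$ and $|\partial b(x',\xi')|\lesssim\nu_{1}(x',\xi')$.

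Applying these operators to the wavepacket $g^{A}_{x',\xi'}$ is explicit, using \eqref{eq:magnetic phase property}. We have $(\operatorname{Q}-x')g^{A}_{x',\xi'}=(y-x')g^{A}_{x',\xi'}$ and
$$
(\operatorname{P}^{A}-\xi')g^{A}_{x',\xi'} = \bigl(i(y-x') - A(y,x')\bigr)g^{A}_{x',\xi'}, \qquad |A(y,x')|\lesssim C_{B}|y-x'|,
$$
so each factor only produces polynomial weights in $y-x'$ times the Gaussian. The inner product with $g^{A}_{x,\xi}$ produces the phase $e^{i\Gamma^{B}(y,x,x')}$, whose derivatives are controlled by the flux bounds \eqref{eq:flux bound}. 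Integrating by parts in $y$ with $(1+|\xi-\xi'|^{2})^{-1}(1-\Delta_{y})$ then gives
$$
|k|\lesssim \nu_{2}(x',\xi')\,\langle x-x'\rangle^{-N}\langle \xi-\xi'\rangle^{-N}.
$$
The same decay holds for the contribution of $b_{2}$, by writing it as an oscillatory integral in $(y,\eta,y')$ and integrating by parts, which is standard for $S^{0,(2)}$ symbols.

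Finally, Peetre's inequality gives $\nu_{m-2}(x,\xi)\nu_{2}(x',\xi')\nu_{-m}(x',\xi')\lesssim\langle(x-x',\xi-\xi')\rangle^{|m-2|}$. This is absorbed by the decay of $k$ for $N$ large, and the Schur test applies.

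The main obstacle is the uniform off-diagonal decay of $k$ under the magnetic phase. Because $A$ grows linearly, decay in $\xi-\xi'$ must come from integration by parts hitting only $e^{i\Gamma^{B}}$, whose derivatives grow at most polynomially in $|y-x|+|y-x'|$ by \eqref{eq:flux bound}, and never $A$ itself. This is exactly why we expand around $(x',\xi')$ using $\operatorname{P}^{A}$ rather than $D$.
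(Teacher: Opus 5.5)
Your proposal is correct and is essentially the paper's argument. The paper derives the lemma from Lemma~\ref{lem:weighted kernel bound} with $s=2$: a bound on $\langle g^{A}_{z,\zeta},\operatorname{Op}^{A}(h)g^{A}_{x,\xi}\rangle_{L^{2}}$ with growth $\nu_{2}$ and rapid off-diagonal decay, followed by Peetre's inequality and a Young/Schur argument in phase space, exactly as you do; placing the weight at the input rather than the output point is immaterial given the decay. The paper leaves that kernel estimate implicit. Your sketch of it, via $(\operatorname{P}^{A}-\xi')g^{A}_{x',\xi'}=(i(y-x')-A(y,x'))g^{A}_{x',\xi'}$, the flux phase $e^{i\Gamma^{B}}$ and an oscillatory-integral treatment of the Taylor remainder, uses the same ingredients as Lemma~\ref{lem:flat approximation} and the $G_{\mathcal{R}_{2}}$ estimate in Lemma~\ref{lem:G bound}.
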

	
	This lemma can be seen as a consequence of the following more general lemma.
	
	\begin{lem}\label{lem:weighted kernel bound}
		Let $1 \leq p \leq \infty$ and $m,s \in \mathbb{R}$. If a linear operator $\mathcal{L}: \mathcal{S}(\mathbb{R}^{d}) \to \mathcal{S}'(\mathbb{R}^{d})$ satisfies for all $N,N' \in \mathbb{N}$
		\begin{equation}\label{eq:L kernel bound}
			|\langle g^{A}_{z,\zeta}, \mathcal{L}g^{A}_{x,\xi}\rangle_{L^{2}}| \lesssim \nu_{s}(z,\zeta)\langle z-x\rangle^{-N}\langle \zeta-\xi\rangle^{-N'},
		\end{equation}
		then $\mathcal{L}: M^{m,p}_{A}(\mathbb{R}^{d}) \to M^{m-s,p}_{A}(\mathbb{R}^{d})$ is bounded.
	\end{lem}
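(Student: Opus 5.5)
We prove the lemma through the inversion formula of Proposition \ref{prop:magnetic modulation space}(iv). This reduces the boundedness of $\mathcal{L}$ to the boundedness of an integral operator on weighted $L^{p}(\mathbb{R}^{2d})$. We take $\lambda=1$, which is allowed by Proposition \ref{prop:magnetic modulation space}(i) (the weighted analogue holds with the same argument). We first work with $u\in\mathcal{S}(\mathbb{R}^{d})$ and at the end extend by density (Proposition \ref{prop:magnetic modulation space}(iii)). For $p=\infty$ the extension is built into the definition of the space as a closure of $\mathcal{S}$.

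\textbf{Step 1: reduction to a phase-space kernel.} Write $u=\mathcal{T}^{A*}\mathcal{T}^{A}u=\int g^{A}_{x,\xi}\,\mathcal{T}^{A}u(x,\xi)\,dxd\xi$. Then
$$
\mathcal{T}^{A}\mathcal{L}u(z,\zeta)=\int_{\mathbb{R}^{2d}} K(z,\zeta;x,\xi)\,\mathcal{T}^{A}u(x,\xi)\,dxd\xi,\qquad K(z,\zeta;x,\xi):=\langle g^{A}_{z,\zeta},\mathcal{L}g^{A}_{x,\xi}\rangle_{L^{2}}.
$$
Interchanging $\mathcal{L}$ with the integral needs justification, because $\mathcal{L}$ only maps $\mathcal{S}\to\mathcal{S}'$. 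For $u\in\mathcal{S}$ we have $\mathcal{T}^{A}u\in\mathcal{S}(\mathbb{R}^{2d})$ by Lemma \ref{lem:wavepacket transform}(i). The map $(x,\xi)\mapsto g^{A}_{x,\xi}$ is smooth into $\mathcal{S}(\mathbb{R}^{d})$ with polynomially growing seminorms, so the integral converges in $\mathcal{S}$. Continuity of $\mathcal{L}$, which I assume as part of "$\mathcal{L}:\mathcal{S}\to\mathcal{S}'$ linear", then allows the interchange. Pairing with $g^{A}_{z,\zeta}$ gives the formula.

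\textbf{Step 2: weighted Schur test.} We must show that $\tilde u\mapsto \nu_{m-s}\,(K\tilde u)$ is bounded from $\nu_{m}^{-1}L^{p}$ to $L^{p}$. Equivalently, the kernel
$$
\tilde K(z,\zeta;x,\xi):=\nu_{m-s}(z,\zeta)\,K(z,\zeta;x,\xi)\,\nu_{-m}(x,\xi)
$$
must be bounded on $L^{p}(\mathbb{R}^{2d})$. By hypothesis \eqref{eq:L kernel bound},
$$
|\tilde K|\lesssim \nu_{m}(z,\zeta)\,\nu_{-m}(x,\xi)\,\langle z-x\rangle^{-N}\langle\zeta-\xi\rangle^{-N'}.
$$
Peetre's inequality gives $\nu_{m}(z,\zeta)\nu_{-m}(x,\xi)\lesssim \nu_{|m|}(z-x,\zeta-\xi)\lesssim\langle z-x\rangle^{|m|}\langle \zeta-\xi\rangle^{|m|}$. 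Choosing $N,N'>|m|+2d$, we get $|\tilde K|\lesssim\langle z-x\rangle^{-2d}\langle\zeta-\xi\rangle^{-2d}$, which is a convolution kernel with integrable profile on $\mathbb{R}^{2d}$. Young's inequality, or equivalently Schur's test with uniform row and column bounds, then gives boundedness on $L^{p}$ for every $1\le p\le\infty$.

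\textbf{Step 3: conclusion and extension.} Steps 1 and 2 give $\|\mathcal{L}u\|_{M^{m-s,p}_{A}}\lesssim\|u\|_{M^{m,p}_{A}}$ for $u\in\mathcal{S}$. For $p<\infty$ we extend to all of $M^{m,p}_{A}$ by density, using that $\mathcal{S}$ is dense in the weighted space, the analogue of Proposition \ref{prop:magnetic modulation space}(iii). For $p=\infty$ the extension is automatic, since $M^{m,\infty}_{A}$ is defined as the closure of $\mathcal{S}$.

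The main obstacle is the justification in Step 1, namely that the phase-space integral representation commutes with $\mathcal{L}$. It requires checking that the magnetic wavepackets $g^{A}_{x,\xi}$ depend smoothly and tempered-ly on $(x,\xi)$ in $\mathcal{S}$. This in turn needs control of derivatives of $\varphi^{A}(y,x)$, which follows from the bounds on $B$ and the transversal gauge. Once that is done, the rest is the standard Peetre plus Schur argument.
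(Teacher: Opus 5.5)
Your proof is correct and follows the paper's argument. It uses the reproducing formula $u=\mathcal{T}^{A*}\mathcal{T}^{A}u$, the phase-space kernel $\langle g^{A}_{z,\zeta},\mathcal{L}g^{A}_{x,\xi}\rangle_{L^{2}}$ weighted by $\nu_{m-s}$ and $\nu_{-m}$, Peetre's inequality, Young's inequality for integral operators, and finally density. Your version is also slightly more careful than the paper's in two places: you use $\nu_{|m|}$ in Peetre's inequality (the paper's step $\nu_{m}(z,\zeta)\lesssim\nu_{m}(z-x,\zeta-\xi)\nu_{m}(x,\xi)$ only holds for $m\geq 0$), and you make explicit that continuity of $\mathcal{L}:\mathcal{S}\to\mathcal{S}'$ is needed to pass $\mathcal{L}$ through the superposition of wavepackets.
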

	
	\begin{proof}
		 By  density of the Schwartz space $\mathcal{S}(\mathbb{R}^{d})$ in $M^{m,p}_{A}(\mathbb{R}^{d})$, it suffices to consider $u \in \mathcal{S}(\mathbb{R}^{d})$. Denote $G_{\mathcal{L}}(z,\zeta,x,\xi) := \langle g^{A}_{z,\zeta}, \mathcal{L}g^{A}_{x,\xi}\rangle_{L^{2}}$.
		By the reproducing formula  $u = \mathcal{T}^{A*}\mathcal{T}^{A}u$ we can write
		$$
		\begin{aligned}
			& \|\mathcal{L}u\|_{M^{m-s,p}_{A}(\mathbb{R}^{d})}^{p}\\
			= & \int_{\mathbb{R}^{2d}} \left|\nu_{m-s}(z,\zeta)\int_{\mathbb{R}^{d}} \overline{g^{A}_{z,\zeta}}(y) \mathcal{L} \left( 
			\int_{\mathbb{R}^{2d}} g^{A}_{x,\xi}(y) \mathcal{T}^{A}u(x,\xi)\ dxd\xi\right)\ dy\right|^{p}\ dzd\zeta\\
			= & \int_{\mathbb{R}^{2d}} \left|\int_{\mathbb{R}^{2d}} \nu_{m-s}(z,\zeta)G_{\mathcal{L}}(z,\zeta,x,\xi) \nu_{-m}(x,\xi)\nu_{m}(x,\xi)\mathcal{T}^{A}u(x,\xi)\ dxd\xi\right|^{p}\ dzd\zeta.
		\end{aligned}
		$$
		Notice that the inner integral on right hand side above can be seen as integrating a phase space function $\nu_{m}\mathcal{T}^{A}u$ against an integral kernel involving $G_{\mathcal{L}}$.  By the property \eqref{eq:L kernel bound}, the integral kernel can be bounded for all $N,N' \in \mathbb{N}$ by
	$$
		\begin{aligned}
			& |\nu_{m-s}(z,\zeta)G_{\mathcal{L}}(z,\zeta,x,\xi) \nu_{-m}(x,\xi)|\\
			\lesssim\ & \nu_{m}(z,\zeta) \langle z-x\rangle^{-N}\langle \zeta-\xi\rangle^{-N'} \nu_{-m}(x,\xi)\\
			\lesssim\ &  \nu_{m}(z-x,\zeta-\xi)\nu_{m}(x,\xi)\langle z-x\rangle^{-N}\langle \zeta-\xi\rangle^{-N'}\nu_{-m}(x,\xi)\\
			\lesssim\ &  \langle z-x\rangle^{-N+m}\langle \zeta-\xi\rangle^{-N'+m}.
		\end{aligned}
		$$
		With the sufficient off-diagonal decay of the kernel above, we obtain the boundedness of $\mathcal{L}$ by Young's inequality for integral operators
		$$
		\|\mathcal{L}u\|_{M^{m-s,p}_{A}(\mathbb{R}^{d})} \lesssim \|\nu_{m}\mathcal{T}^{A}u\|_{L^{p}(\mathbb{R}^{2d})} = \|u\|_{M^{m,p}_{A}(\mathbb{R}^{d})}.
		$$
	\end{proof}

	\section{Magnetic Phase Space Flow}
	
	The magnetic Weyl quantization $h \mapsto \operatorname{Op}^{A}(h)$ gives a correspondence between the classical Hamiltonian $h$ and the quantum observable $\operatorname{Op}^{A}(h)$. In the spirit of the correspondence principle, it is natural to relate the quantum evolution $u_{0} \mapsto u(t)$ on $\mathbb{R}^{d}$ to a classical phase flow on the phase space $T^{*}\mathbb{R}^{d} \simeq \mathbb{R}^{2d}$. Since our choice of quantization depends intrinsically on the magnetic field $B$, the corresponding flow should also depends on $B$ instead of the vector potential $A$. This is achieved by equipping the phase space $T^{*}\mathbb{R}^{d}$ with the twisted symplectic form $\omega_{B}$, given by
	$$
	\omega_{B(x)} := \sum_{j=1}^d d x_{j}\wedge d\xi_{j}+\frac{1}{2}\sum_{j, k} B_{j k}(x) d x_j\wedge d x_{k},
	$$
	which then gives rise to the associated Poisson bracket $\{\cdot,\cdot\}_{B}$
	$$
	\{f, g\}_B := \sum_{j=1}^d\left(\partial_{\xi_j} f \partial_{x_j} g - \partial_{x_j} f \partial_{\xi_j} g\right)+ \sum_{j, k=1}^d B_{j k}(\cdot) \partial_{\xi_j} f \partial_{\xi_k} g .
	$$
	 With the magnetic Poisson bracket above, we define the magnetic Hamiltonian flow induced by a time-dependent Hamiltonian $h$. 
	\begin{defi}
		For a given magnetic field $B$, we define $\chi_{B}(h)(t,s): (x^{s},\xi^{s}) \mapsto (x^{t},\xi^{t})$ the magnetic Hamiltonian flow induced by the Hamiltonian  $h \in C(\mathbb{R}, C^{\infty}(\mathbb{R}^{2d}))$, satisfying the  equations 
\begin{align} \label{eq:def-hamilt-flow}
		\begin{cases}
			\dot{x^{t}}_j  =  \{h(t),x^{t}_{j}\}_{B},\\
			\dot{\xi^{t}}_j  = \{h(t),\xi_{j}^{t}\}_{B}.
		\end{cases} 
\end{align}
	\end{defi}
	
	By definition  of the Poisson bracket,  the flow $(x^{t},\xi^{t})$ satisfies  more  explicitly 
	\begin{equation}\label{eq:magnetic hamiltonian flow flat}
		\dot{x^{t}_{j}} = \partial_{\xi_{j}}h(t;x^{t},\xi^{t}), \quad \dot{\xi^{t}_{j}} =  -\partial_{x_{j}}h(t;x^{t},\xi^{t}) + \sum_{k=1}^{d}B_{kj}(x^{t})\partial_{\xi_{k}}h(t;x^{t},\xi^{t}).
	\end{equation}
	 In the following, we focus on such magnetic flows associated to symbols $h$  satisfying $h(t) \in S^{0,(2)}(\mathbb{R}^{2d})$.

\begin{lem}\label{lem:flow}
 Assume $h \in C(\mathbb{R}, C^{\infty}(\mathbb{R}^{2d}))$ with  $h(t) \in S^{0,(2)}(\mathbb{R}^{2d})$ uniformly in $t$, and let  the  magnetic field $B$ satisfy  $\|B_{jk}\|_\infty \leq C_{B}$. Then the flow $\chi_{B}(h)$ exists globally in time and $\chi_{B}(h)(t,s)$ is volume-preserving for all $t, s \in \R$.
	\end{lem}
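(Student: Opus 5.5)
Global existence and volume preservation for \eqref{eq:magnetic hamiltonian flow flat} follow from two separate arguments: a linear growth bound on the vector field, and Liouville's theorem for the twisted symplectic form. Write $F(t;x,\xi)$ for the right-hand side of \eqref{eq:magnetic hamiltonian flow flat}.

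\emph{Local existence and global continuation.} Since $h(t)\in S^{0,(2)}(\mathbb{R}^{2d})$ uniformly in $t$, all second and higher derivatives of $h$ in $(x,\xi)$ are bounded. Integrating from the origin, the first derivatives grow at most linearly:
$$|\partial_x h(t;x,\xi)|+|\partial_\xi h(t;x,\xi)|\le |\nabla h(t;0,0)|+C(|x|+|\xi|).$$
Here $|\nabla h(t;0,0)|$ is locally bounded in $t$, by the continuity $h\in C(\mathbb{R},C^\infty)$ and compactness of time intervals. The field $F$ is continuous in $t$ and smooth in $(x,\xi)$, since $B_{kj}$ is smooth. Picard--Lindelöf therefore gives a unique local solution.

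The magnetic term is bounded by $C_B|\partial_\xi h|$, because $\|B_{jk}\|_\infty\le C_B$. Hence on any compact interval $[-T,T]$,
$$|F(t;x,\xi)|\le C_T(1+|x|+|\xi|).$$
Set $r(t)=1+|x^t|^2+|\xi^t|^2$. Then $|\dot r|\lesssim C_T r$, and Gronwall's inequality excludes blow-up in finite time. So the flow $\chi_B(h)(t,s)$ exists for all $t,s$ and is a family of $C^\infty$ diffeomorphisms satisfying $\chi_B(h)(t,s)\chi_B(h)(s,r)=\chi_B(h)(t,r)$.

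\emph{Volume preservation.} By Liouville's formula, $\det D\chi_B(h)(t,s)=\exp\int_s^t (\operatorname{div}F)(\tau;\chi(\tau,s))\,d\tau$, so it suffices to show $\operatorname{div}_{(x,\xi)}F=0$. Computing,
$$\operatorname{div}F=\sum_j \partial_{x_j}\partial_{\xi_j}h-\sum_j\partial_{\xi_j}\partial_{x_j}h+\sum_{j,k}B_{kj}(x)\,\partial_{\xi_j}\partial_{\xi_k}h .$$
The first two sums cancel by equality of mixed partials. The last sum vanishes because $B_{kj}=-B_{jk}$ is antisymmetric while the Hessian $\partial_{\xi_j}\partial_{\xi_k}h$ is symmetric. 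Note that $B_{kj}$ depends only on $x$, so no term like $\partial_{\xi_j}B_{kj}$ appears.

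There is an equivalent geometric argument. $F$ is the Hamiltonian vector field of $h(t)$ for the closed form $\omega_B$, since $B$ is closed. Hence $\omega_B$, and so $\omega_B^d$, is preserved. Because the $dx\wedge dx$ terms drop out of $\omega_B^d$, this top form is a constant multiple of Lebesgue measure.

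Neither step is difficult. The only point needing care is the bookkeeping of time-uniformity: $\nabla h(t;0,0)$ is only locally bounded in $t$, so the Gronwall constant depends on the interval, and this is enough for global existence.
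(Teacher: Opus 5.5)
Your proof is correct and follows essentially the same route as the paper: global existence comes from linear growth of $\nabla h$ (from the bounded Hessian) plus Gronwall, and volume preservation from $\operatorname{div}F=0$ via equality of mixed partials and the antisymmetry of $B$. Your extra care is welcome: working with $1+|x^t|^2+|\xi^t|^2$ avoids the non-differentiability of $|x^t|$ at the origin, the Liouville formula handles the time-dependent field directly, and you correctly note that $\nabla h(t;0,0)$ is only locally bounded in $t$.
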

	
	\begin{proof}
		For this lemma, the $t$-dependence of $h(t)$ does not play a role, because we only need the bounds for the $x,\xi$ derivatives of $h$, which by our assumption are uniform in $t$.  We thus suppress the notation and just take $h(x,\xi) := h(t;x,\xi)$. For the existence, we firstly note  that  the vector field in (\ref{eq:magnetic hamiltonian flow flat}) associated to $h \in S^{0,(2)}(\mathbb{R}^{2d})$
		$$
		\Phi_{h} := \left(\partial_{\xi_{1}}h,\dots,\partial_{\xi_{d}}h,-\partial_{x_{1}}h+\sum_{k}B_{k1}(x)\partial_{\xi_{k}}h,\dots,-\partial_{x_{d}}h+\sum_{k}B_{kd}(x)\partial_{\xi_{k}}h\right)
		$$
		is smooth and thus locally Lipschitz, so $\chi_{B}(h)(t,s)$ exists locally in time. To extend it globally, we rule out finite time blow-up by showing, for $s,t \in \mathbb{R}$ with $s < t$,  we can take $T > 0$ such that $t-s < T$ and bound 
		\begin{equation}\label{eq:no blow-up}
			1+|x^{t}|+|\xi^{t}| \leq C_{T}(1+|x^{s}|+|\xi^{s}|).
		\end{equation}
		By the boundedness of the second-order derivatives of $h \in S^{0,(2)}(\mathbb{R}^{2d})$, we can bound its first-order derivatives by
		$$
			|\partial_{x}h(x,\xi)| + |\partial_{\xi}h(x,\xi)| \leq C_{h}(1 + |x| + |\xi|).
		$$
		Take $C_{B} \geq 1$ such that $|B(x)| \leq C_{B}$, and denote $C_{h,B} := C_{h}C_{B} \geq C_{h}$. The above bounds for the derivatives of $h$ and the expression \eqref{eq:magnetic hamiltonian flow flat} for $\dot{x^{t}}$ and $\dot{\xi^{t}}$ imply
		\begin{equation}\label{eq:t-derivatives of flow}
			|\dot{x^{t}}| \leq C_{h}(1+|x^{t}|+|\xi^{t}|),\qquad |\dot{\xi^{t}}| \leq C_{h,B}(1+|x^{t}|+|\xi^{t}|).
		\end{equation}
		We then calculate
		\begin{equation}\label{eq:t-derivative bounds}
			\begin{gathered}
				\dfrac{d|x^{t}|}{dt} = \dfrac{\dot{x^{t}}\cdot x^{t}}{|x^{t}|} \leq |\dot{x^{t}}| \leq C_{h}(1+|x^{t}|+|\xi^{t}|),\\
				\dfrac{d|\xi^{t}|}{dt} = \dfrac{\dot{\xi^{t}}\cdot \xi^{t}}{|\xi^{t}|} \leq |\dot{\xi^{t}}| \leq C_{h,B}(1+|x^{t}|+|\xi^{t}|).
			\end{gathered}
		\end{equation}
		Combining the above bounds, we see that
		$$
		\begin{aligned}
			\frac{d(1+|x^{t}|+|\xi^{t}|)}{dt} & \leq 2C_{h,B}(1 + |x^{t}|+|\xi^{t}|).
		\end{aligned}
		$$
		By Gronwall's inequality,  we obtain the desired upper bound \eqref{eq:no blow-up}
		\begin{equation}\label{eq:Gronwall}
			1 + |x^{t}| + |\xi^{t}| \leq e^{2TC_{h,B}}(1 + |x^{s}| + |\xi^{s}|).
		\end{equation}
		 Moreover, since the backward-in-time flow $(x^{t},\xi^{t}) \mapsto (x^{s},\xi^{s})$ can be seen as a flow driven by the vector field $-\Phi_{h}$, a similar argument as above also gives a reversed bound
		\begin{equation}\label{eq:Gronwall backward}
			1 + |x^{s}| + |\xi^{s}| \leq e^{2TC_{h,B}}(1 + |x^{t}| + |\xi^{t}|).
		\end{equation}
		
		 For the volume-preserving property, we use the fact that a flow generated by a divergence-free vector field is volume-preserving, see Proposition 16.33 in \cite{Lee_Smooth}. For the vector field $\Phi_{h}$ we can compute
		$$
		\operatorname{div} \Phi_{h} = \sum_{j,k=1}^{d} \partial_{x_{j}}\partial_{\xi_{k}}h - \partial_{\xi_{j}}\partial_{x_{k}}h + B_{kj}(x)\partial_{\xi_{j}}\partial_{\xi_{k}}h = 0,
		$$
		so the flow $\chi_{B}(h)$ is volume-preserving. 
	\end{proof}
	
	The following lemma shows that the magnitude of the position and momentum vectors of certain magnetic flows are comparable in a time-averaged sense.    The lemma is inspired by \cite[Lemma 2.1]{Yajima_Schrodinger}, and is one of the crucial technical ingredients for dealing with unbounded magnetic potentials.  
	
	\begin{lem}\label{lem:t-averaging}
		 Let $\chi_{B}(h)$ be the magnetic Hamiltonian flow associated to a symbol $h$ satisfying Assumption \ref{asmp:h}  and a magnetic field $B$ satisfying $\|B_{jk}\|_\infty \leq C_{B}$. Let $I \subset \mathbb{R}$ be a compact interval  and fix an arbitrary $\epsilon > 0$.  We have for $(x^{t},\xi^{t}), t \in I$, satisfying  the flow $\chi_{B}(h)$ in \eqref{eq:def-hamilt-flow},
		\begin{equation}\label{eq:flow lem}
			\int_{I} \langle x^{\tau} \rangle^{-1-\epsilon} |\xi^{\tau}|\ d\tau \leq C_{h,B,\epsilon}(1+|I|), \qquad  (x,\xi) \in \mathbb{R}^{2d}.
		\end{equation}
	\end{lem}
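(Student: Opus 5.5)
The plan is a positive-commutator (virial) argument along the flow, in the spirit of \cite[Lemma 2.1]{Yajima_Schrodinger}. I would look for a bounded phase-space observable $q(x,\xi)$ whose derivative along $\chi_{B}(h)$ dominates $\langle x\rangle^{-1-\epsilon}|\xi|$ up to a bounded error. Integrating over $I$ would then give
$$
\int_I \langle x^\tau\rangle^{-1-\epsilon}|\xi^\tau|\,d\tau \lesssim \sup|q| + C|I|.
$$

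\emph{Step 1: the weight.} I would construct a radial $\phi\in C^\infty(\mathbb{R}^d)$ with $\partial^\alpha\phi$ bounded for $1\le|\alpha|$ and with Hessian bounded below, $\nabla^2\phi(x)\ge c\langle x\rangle^{-1-\epsilon}I$. One candidate is $\phi(x)=\langle x\rangle-\frac{1}{\epsilon'}\langle x\rangle^{1-\epsilon'}$ with $\epsilon'\le\epsilon$, adjusted near the origin. The radial eigenvalue of its Hessian behaves like $\epsilon'\langle x\rangle^{-1-\epsilon'}$, and the transverse eigenvalues like $\phi'(r)/r\gtrsim\langle x\rangle^{-1}$. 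I would then set
$$
q(x,\xi):=\nabla\phi(x)\cdot\xi\,\langle\xi\rangle^{-1},
$$
which is bounded by $\|\nabla\phi\|_\infty$.

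\emph{Step 2: the derivative of $q$.} Differentiating along \eqref{eq:magnetic hamiltonian flow flat} gives
$$
\tfrac{d}{dt}q=\frac{\partial_\xi h\cdot\nabla^2\phi\,\xi}{\langle\xi\rangle}+\frac{\nabla\phi\cdot\dot\xi}{\langle\xi\rangle}-\frac{(\nabla\phi\cdot\xi)(\xi\cdot\dot\xi)}{\langle\xi\rangle^{3}}.
$$
In the main term I would write $\partial_\xi h(x,\xi)=M(x,\xi)\xi+\partial_\xi h(x,0)$ with $M=\int_0^1\partial_\xi^2h(x,s\xi)\,ds$. Here I use the second-order hypothesis in the form that $\partial_\xi^2h$ is uniformly positive definite; this is how I would interpret ``$h$ is of second order in $\eta$'' in Assumption \ref{asmp:h}. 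Combined with the lower bound on $\nabla^2\phi$, this gives the main term $\gtrsim\langle x\rangle^{-1-\epsilon}|\xi|^2/\langle\xi\rangle$, which in turn is $\gtrsim\langle x\rangle^{-1-\epsilon}|\xi|-1$. Since $M$ and $\nabla^2\phi$ are not simultaneously diagonal, I would either choose $\phi$ adapted to $M$ or absorb the off-diagonal part using $M\ge cI$ and $\|M\|\le C$. The remaining part $\partial_\xi h(x,0)$ is $O(1+|x|)$, and its contribution is controlled by $|x|\langle x\rangle^{-1-\epsilon}\lesssim1$.

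\emph{Step 3: the error terms, which I expect to be the main obstacle.} The terms involving $\dot\xi$ are the difficulty. By \eqref{eq:t-derivatives of flow} we have $|\dot\xi|\lesssim1+|x|+|\xi|$, so these terms are bounded by $C(1+|x|/\langle\xi\rangle)$, and the $|x|/\langle\xi\rangle$ part is not bounded in general. Note that the magnetic contribution $B\,\partial_\xi h$ is only $O(|\xi|+|x|)$ because $B$ is bounded. To handle this, I would first try to split $\partial_x h=\partial_x h(x,0)+O(|\xi|)$, so that the electric part $-\nabla\phi\cdot\partial_xh(x,0)$ appears as an honest force term. I would then modify $q$ by replacing $\langle\xi\rangle$ with $\langle\xi\rangle+\langle x\rangle$, or alternatively use $\langle\,\cdot\,\rangle$ of the energy-like quantity $h(t;x,\xi)$, which grows like $|\xi|^2$ plus at most $|x|^2$. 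With this choice the ratio $|\dot\xi|/\text{(denominator)}$ stays bounded, and the main term still controls $\langle x\rangle^{-1-\epsilon}|\xi|$ whenever $|\xi|\gtrsim|x|$.

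In the complementary region $|\xi|\lesssim\langle x\rangle$ the integrand is already small, since $\langle x\rangle^{-1-\epsilon}|\xi|\lesssim\langle x\rangle^{-\epsilon}\le1$, so it contributes at most $|I|$ trivially. Splitting the time integral according to these two regions and using the Gronwall comparability \eqref{eq:Gronwall}--\eqref{eq:Gronwall backward} to handle transitions between them should then give \eqref{eq:flow lem} with a constant independent of $(x,\xi)$.
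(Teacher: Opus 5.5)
There is a genuine gap in Step 2: the main term need not be positive. That term is $(M\xi)\cdot\nabla^2\phi(x)\,\xi$ with $M=\int_0^1\partial_\xi^2h(x,s\xi)\,ds$. The product of two positive definite matrices does not give a positive quadratic form in general.

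Absorbing the off-diagonal part using $cI\le M\le CI$ does not work, because $\nabla^2\phi$ is necessarily very anisotropic. For your radial weight the radial eigenvalue is of order $\langle x\rangle^{-1-\epsilon'}$, while the transverse ones are of order $\langle x\rangle^{-1}$. No strictly convex weight with bounded gradient can do better: if $\nabla^2\phi$ had bounded condition number, the spherical mean $\bar\phi$ would satisfy $\bar\phi''(r)\gtrsim\bar\phi'(r)/r$ for $r\ge 1$, contradicting boundedness of $\nabla\phi$.

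Concretely, take $d=2$, $x=Re_1$, $M$ with diagonal entries $1$ and off-diagonal entries $a\in(0,1)$, and $\xi=|\xi|(1,-s)$ with fixed $s\in(0,a)$. Then
$(M\xi)\cdot\nabla^2\phi\,\xi=-s(a-s)\phi'(R)R^{-1}|\xi|^2+O(R^{-1-\epsilon'}|\xi|^2)$.
Its negative part exceeds the gain $\langle x\rangle^{-1-\epsilon}|\xi|^2$ by a factor $\gtrsim R^{\epsilon}$.

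Adapting $\phi$ to $M$ could at best handle a constant $M$, via a linear symplectic change of variables. Under your hypothesis, however, $M=M(t,x,\xi)$. For the model symbol $|\eta|^2+V$, where $M=2I$, your argument with denominator $\langle x\rangle+\langle\xi\rangle$ does go through.

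The repair is to put $\dot x=\partial_\xi h$ instead of $\xi$ into the observable:
\[
q(t):=\frac{\nabla\phi(x^t)\cdot\partial_\xi h(t;x^t,\xi^t)}{\langle x^t\rangle+\langle\xi^t\rangle},
\]
which is bounded. Since $\ddot x=\partial_\xi\dot h+\partial_x\partial_\xi h\,\dot x+\partial_\xi^2h\,\dot\xi=O(1+|x|+|\xi|)$ (this is where $\dot h\in S^{0,(2)}$ enters), and the denominator has derivative $O(1+|x|+|\xi|)$, you get
$\dot q=\dot x\cdot\nabla^2\phi(x)\,\dot x\,/(\langle x\rangle+\langle\xi\rangle)+O(1)$.

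The main term is now nonnegative for any convex weight. On $\{|\xi|\ge K\langle x\rangle\}$ it is $\gtrsim\langle x\rangle^{-1-\epsilon}|\xi|$ once $|\partial_\xi h|\ge c|\xi|-C\langle x\rangle$. That lower bound follows from your Hessian hypothesis, and it is the kind of bound \eqref{eq:h xi-derivative} the paper relies on. On the complement the integrand is at most $K$. Hence $\langle x^t\rangle^{-1-\epsilon}|\xi^t|\le C_1\dot q+C_2$ pointwise on $I$, and integrating gives \eqref{eq:flow lem} directly.

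Your last paragraph should therefore be dropped. Splitting the time integral by regions would produce a boundary term $q(\text{exit})-q(\text{entry})$ for every excursion, with no control on their number. Nonnegativity of the main term makes any splitting unnecessary.

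Repaired this way, your route is a genuine alternative to the paper's. Yours is one global Morawetz-type estimate with a bounded weight. The paper chops $I$ into pieces of length $T\sim C_{h,B}^{-1}$, runs the bootstrap \eqref{eq:t growth}, and uses the short-time convexity $\tfrac{d^2}{dt^2}|x^t|^2=2|\dot x^t|^2+2\ddot x^t\cdot x^t\ge\tfrac14|\xi^r|^2$. That is a local virial identity with the unbounded weight $|x|^2$; both arguments draw their positivity from $|\dot x|^2$.

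Finally, reading the second-order hypothesis as uniform positivity of $\partial_\xi^2h$ strengthens Assumption \ref{asmp:h}, but some strengthening is needed anyway. Take a radial $h(\eta)$ equal to $|\eta|^2(1+a\sin\log|\eta|)$ for $|\eta|\ge 2$, with $2/\sqrt5<a<1$, suitably smoothed on $|\eta|\le 2$. It satisfies that assumption, yet $\partial_\eta h$ vanishes on spheres of radii $r_k\to\infty$. The points $(0,\xi_k)$ with $|\xi_k|=r_k$ are then stationary for the flow and violate \eqref{eq:flow lem}.
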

	
	\begin{proof}
		We prove this in a similar way as  \cite[Lemma 2.1]{Yajima_Schrodinger}.  Again the $t$-dependence of $h(t)$ only makes a difference at the very end of the proof, so we suppress the notation for now.  By  Assumption \ref{asmp:h}, $h \in S^{0,(2)}(\mathbb{R}^{2d})$ and is second-order in $\xi$.   We thus have a lower bound for its  $\xi$-derivative, that is, there exist $b \in \mathbb{R}^{d}, C \in \mathbb{R}$ such that 
		\begin{equation}\label{eq:h xi-derivative}
			\begin{aligned}
				|\partial_{\xi}h(x,\xi)| \geq \big||\xi| - |b\cdot x + C|\big|.  
			\end{aligned}
		\end{equation}
		We denote a new constant $C_{h,B}$ based on $C_{h}$, $C_{B}$ and $|b|$ by 
		$$
		C_{h,B} := \max(|b|,C_{h}C_{B}).
		$$
		Without loss of generality, we can assume the constant $C_{h,B} \geq 1$, since otherwise the arguments below hold by replacing $C_{h,B}$ everywhere with $1$.  Sub-divide the interval $I = \sum_{j} I_{j}$ into compact intervals with length $|I_{j}| = T$, such that $T$ is sufficiently small satisfying
		\begin{equation}\label{eq:T small}
			200T^{2}C_{h,B}^{3}+60TC_{h,B}^{2} \leq \frac{1}{4}.
		\end{equation}
		It suffices to prove \eqref{eq:flow lem} separately for each $I_{j}$. We fix $I_{j}$ and suppress the index $j$, denoting it again by $I$. We also fix the starting point of the flow  as   $x,\xi \in \mathbb{R}^{d}$. Firstly, for the case of $TC_{h,B}|\xi^{t}| \leq 1 + |x^{t}|$ for all $t \in I$, the integral is bounded by
		\begin{equation}\label{eq:flow t-integral bound 1}
			\int_{I} \langle x^{\tau} \rangle^{-1-\epsilon} |\xi^{\tau}|\ d\tau \leq (TC_{h,B})^{-1}\int_{I} \langle x^{\tau} \rangle^{-1-\epsilon}(1+|x^{\tau}|)\ d\tau \leq C_{h,B}^{-1}.
		\end{equation}
		 For the other case, there exists $r \in I$ such that $TC_{h,B}|\xi^{r}| > 1 + |x^{r}|$.   We claim the following bounds hold for all $t \in I$:
		\begin{equation}\label{eq:t growth}
			1+|x^{t}| < 10TC_{h,B}|\xi^{r}|,\quad \tfrac{2}{3}|\xi^{r}| \leq |\xi^{t}| \leq \tfrac{3}{2}|\xi^{r}|,\quad t \in I.
		\end{equation}
		 We can in fact prove a stronger claim, that the above bounds are satisfied for all $t$ with $|t-r|\leq T$.  This claim can be verified by contradiction.   Denote $T^{*}: = \sup\{\sigma \leq T:  \eqref{eq:t growth} \text{ is satisfied for } |t-r|\leq \sigma\}$ and suppose $T^{*} < T$.    For  $t$  with $|t-r|\leq T^{*}$,  we can bound $1+|x^{t}|$ by 
		$$
		1+|x^{t}| \leq 1 + |x^{r}| + T^{*}\sup_{\tau\in I}\left|\frac{d|x^{\tau}|}{d\tau}\right|.
		$$
		Then by $TC_{h,B}|\xi^{r}| > 1 + |x^{r}|$,  the bound for $\frac{d|x^{t}|}{dt}$ in (\ref{eq:t-derivative bounds}) and the bound (\ref{eq:Gronwall}), we obtain  
		$$
		\begin{aligned}
			1 + |x^{t}| & \leq 1 + |x^{r}| + T^{*}C_{h,B}e^{2TC_{h,B}}(1+|x^{r}|+|\xi^{r}|)\\
			& < TC_{h,B}|\xi^{r}| + 3TC_{h,B}(TC_{h,B}|\xi^{r}|+|\xi^{r}|)\\
			& = 4TC_{h,B}|\xi^{r}| + 3T^{2}C_{h,B}^{2}|\xi^{r}|\\
			& < 9TC_{h,B}|\xi^{r}|,
		\end{aligned}
		$$
		where  we used $e^{2TC_{h,B}} < 3$ because by the smallness assumption \eqref{eq:T small} of $T$ we have $2TC_{h,B} < 1$. Similarly  by the bound for $\frac{d|\xi^{t}|}{dt}$ in \eqref{eq:t-derivative bounds} and the above, we have
		$$
		\begin{aligned}
			|\xi^{t}| & \leq |\xi^{r}| + T^{*}C_{h,B}e^{2TC_{h,B}}(1+|x^{r}|+|\xi^{r}|)\\
			& < |\xi^{r}| + (3T^{2}C_{h,B}^{2} + 3TC_{h,B})|\xi^{r}|\\
			& < \tfrac{11}{10}|\xi^{r}|,
		\end{aligned}
		$$
		where the smallness of $T$ as in \eqref{eq:T small} is also used to obtain the last inequality. The lower bound of $|\xi^{t}|$ holds analogously, that is
		$$
		|\xi^{t}|  > |\xi^{r}| - (3T^{2}C_{h,B}^{2} + 3TC_{h,B})|\xi^{r}| \geq \tfrac{9}{10}|\xi^{r}|.
		$$
		Because the flow $(x^{t},\xi^{t})$ is continuous in $t$, such $T^{*}$ cannot be maximal, thus contradicting its definition.
		
		Now for the constant $C$ in \eqref{eq:h xi-derivative},  we  distinguish two cases   based on the size of $|\xi^{r}|$. Firstly for the case that $|\xi^{r}| \leq 10|C|$, we directly have a desired bound
		$$
		\int_{I} \langle x^{\tau} \rangle^{-1-\epsilon} |\xi^{\tau}|\ d\tau \leq \int_{I} |\xi^{r}|\ d\tau \leq 10|C|T.
		$$
		For the other case $|\xi^{r}| > 10|C|$, using (\ref{eq:h xi-derivative}), (\ref{eq:t-derivative bounds}) and (\ref{eq:t growth}) we can calculate a lower bound for the second-order $t$-derivative of $|x^{t}|$. Here we will be taking $t$-derivatives on $h(t)$, so the $t$-dependence makes a difference, but things are under control since  by our Assumption \ref{asmp:h}, we have $\dot{h}(t) \in S^{0,(2)}(\mathbb{R}^{2d})$  uniformly  in $t$.  In particular, the assumption implies
		$$
		|\partial_{\xi}\dot{h}(t;x^{t},\xi^{t})| \leq C_{h}(1+|x^{t}|+|\xi^{t}|).
		$$
		Using bounds \eqref{eq:t-derivatives of flow} and \eqref{eq:t growth},  we have
		$$
			\begin{aligned}
				\dfrac{d^{2}|x^{t}|^{2}}{dt^{2}} & = 2|\partial_{\xi}h(t;x^{t},\xi^{t})|^{2} + 2\dfrac{d\partial_{\xi}h(t;x^{t},\xi^{t})}{dt}\cdot x^{t}\\
				& = 2|\partial_{\xi}h(t;x^{t},\xi^{t})|^{2} + 2\left(\partial_{\xi}\dot{h}(t;x^{t},\xi^{t}) + \partial_{x}\partial_{\xi}h(t;x^{t},\xi^{t})\dot{x}^{t} + \partial_{\xi}^{2}h(t;x^{t},\xi^{t})\dot{\xi}^{t}\right)\cdot x^{t}\\
				& \geq 2(\tfrac{9}{10}|\xi^{t}|-|b\cdot x^{t}|)^{2} - 2C_{h,B}(1+|x^{t}|+|\xi^{t}|)|x^{t}|\\
				& \geq 2(\tfrac{3}{5}|\xi^{r}|-10TC_{h,B}^{2}|\xi^{r}|)^{2} - 2C_{h,B}(10TC_{h,B}|\xi^{r}|+\tfrac{3}{2}|\xi^{r}|)10TC_{h,B}|\xi^{r}|\\
				& \geq \left(\tfrac{1}{2}-(200T^{2}C_{h,B}^{3}+60TC_{h,B}^{2})\right)|\xi^{r}|^{2},
			\end{aligned}
		$$
		so by the smallness of $T$ as in (\ref{eq:T small}), we have $
		\frac{d^{2}|x^{t}|^{2}}{dt^{2}} \geq \frac{1}{4}|\xi^{r}|^{2}$.   Then  we show for $\tau \in I$, $|x^{\tau}|$ can be bounded from below. Suppose at a time $\rho \in I$, $x^{\rho}$ attains the minimum value $|x^{\rho}| = \inf_{t\in I}|x^{t}|$. Then by a Taylor expansion of $|x^{\tau}|^{2}$ in the $t$-variable at $|x^{\rho}|^{2}$, we have  
		$$
		\begin{aligned}
			|x^{\tau}|^{2} & =|x^{\rho}|^{2} + \tfrac{d|x^{t}|^{2}}{dt}\Big|_{t=\rho}(\tau-\rho) + \tfrac{1}{2}\tfrac{d^{2}|x^{t}|^{2}}{dt^{2}}\Big|_{t=\theta}(\tau-\rho)^{2}\\
			& \geq |x^{\rho}|^{2} + \tfrac{1}{8}|\xi^{r}|^{2}(\tau-\rho)^{2},
		\end{aligned}
		$$
	where in the last inequality we used the fact that $|x^{\rho}|^{2}$ is a minimum, so $\tfrac{d|x^{t}|^{2}}{dt}\Big|_{t=\rho}(\tau-\rho) \geq 0$.    Then we have the desired bound for the integral 
		$$
		\begin{aligned}
			\int_{I} \langle x^{\tau} \rangle^{-1-\epsilon} |\xi^{\tau}|\ d\tau & \lesssim \int_{I} (1+|x^{\tau}|)^{-1-\epsilon} |\xi^{\tau}|\ d\tau\\
			& \lesssim \int_{I} (1+|\xi^{r}||\tau-\rho|)^{-1-\epsilon}|\xi^{r}|\ d\tau\\
			& \leq C'\epsilon^{-1},
		\end{aligned}
		$$
		 where we used the lower bound of $|x^{\tau}|$ and the fact that $|\xi^{\tau}|,|\xi^{\rho}|$ are comparable to $|\xi^{r}|$ by \eqref{eq:t growth}.  Combining this  with the bound (\ref{eq:flow t-integral bound 1}) for the other case, we obtain (\ref{eq:flow lem}).
		\end{proof}

	\section{Phase space approximation}

	 The evolution $u_{0} \mapsto u(t)$  satisfying (\ref{eq:electro-magnetic weyl schrodinger}) can be seen as a quantum evolution associated to the operator $\operatorname{Op}^{A}(h)$. Our goal in this section is to approximate such an evolution with a classical evolution associated to the Hamiltonian $h$ in the phase space. This is in some sense a manifestation of the quantum-classical correspondence principle, and the relevant classical evolution turns out to follow a magnetic Hamiltonian flow $\chi_{B}(h)$ in  phase space. So by taking a phase space perspective using the wavepacket transform $\mathcal{T}_{\lambda}^{A}$, we can obtain an approximate evolution in  phase space that is more transparent and tractable. This change of perspective reveals that to some extent, the quantum evolution $u_{0} \mapsto u(t)$ preserves phase space localizations in the sense of magnetic wavepackets, which leads to the magnetic modulation space boundedness of $u_{0} \mapsto u(t)$. 
	
	\subsection{Conjugating magnetic $\Psi$DOs}
	
The   first step in approximating the generalized magnetic Schr\"odinger evolution in  phase space is to approximate the effect of the operator $\operatorname{Op}^{A}(h)$ in  phase space.   
	That is, we need to understand how the conjugated operator $\mathcal{T}_{\lambda}^{A}\operatorname{Op}^{A}(h)\mathcal{T}_{\lambda}^{A*}$ behaves on phase space functions.  For real symbols $h$, the operator $\operatorname{Op}^{A}(h)$ is symmetric for the pairing of $u \in \mathcal{S}(\mathbb{R}^{d})$ and $g_{x,\xi}^{A,\lambda}$,
	$$
	\mathcal{T}_{\lambda}^{A}\operatorname{Op}^{A}(h)u(x,\xi) = \left\langle g_{x,\xi}^{A,\lambda},\operatorname{Op}^{A}(h)u\right\rangle_{L^{2}} = \left\langle \operatorname{Op}^{A}(h)g_{x,\xi}^{A,\lambda},u\right\rangle_{L^{2}},
	$$
	so it boils down to determining an approximation of the term $\operatorname{Op}^{A}(h)g_{x,\xi}^{A,\lambda}$ in  phase space. The following proposition relates differentiation and multiplication on the configuration space and the phase space.
	
	\begin{lem}\label{lem:substitution}
		For magnetic wavepackets $g_{x,\xi}^{A,\lambda}(y)$, multiplication and differentiation with respect to $y \in \mathbb{R}^{d}$ can be approximated by multiplication and differentiation with respect to $(x,\xi) \in \mathbb{R}^{2d}$, that is,
		\begin{equation}\label{eq:wavepacket differentiation}
			\begin{aligned}
				-i\partial_{y_{j}}g^{A,\lambda}_{x,\xi}(y) & = i\partial_{x_{j}}g^{A,\lambda}_{x,\xi}(y) - \sum_{k=1}^{d}B_{jk}(x)i\partial_{\xi_{k}}g^{A,\lambda}_{x,\xi}(y)\\
				& \quad + [A_{j}(y)-A_{j}(x) - r^{(1)}_{x}(A_{j}(y,x)) + r^{(1)}_{x}(A_{j}(x,y))]g^{A,\lambda}_{x,\xi}(y),
			\end{aligned}
		\end{equation}
		where $r^{(1)}_{x}$ denotes the remainder of the first-order Taylor expansion of $y$ at $x$. 
	\end{lem}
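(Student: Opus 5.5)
The identity \eqref{eq:wavepacket differentiation} is exact, so I would prove it by computing both sides explicitly. Write
$$
g^{A,\lambda}_{x,\xi}(y)=\lambda^{\frac d2}e^{i\xi\cdot(y-x)}g(\lambda(y-x))e^{i\varphi^{A}(y,x)} .
$$
The plan is to apply $-i\partial_{y_j}$, $i\partial_{x_j}$ and $i\partial_{\xi_k}$ to this product, then compare the resulting multiplicative factors.

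\textbf{Step 1: the three derivatives.}

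For $-i\partial_{y_j}$:
\begin{itemize}
\item the modulation contributes $\xi_j$;
\item the Gaussian contributes $-i\lambda(\partial_jg)(\lambda(y-x))/g(\lambda(y-x))$;
\item the phase contributes $\partial_{y_j}\varphi^A(y,x)=A_j(y)-A_j(y,x)$, by \eqref{eq:magnetic phase property}.
\end{itemize}

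For $i\partial_{x_j}$, the modulation again gives $\xi_j$. The Gaussian gives exactly the same term as above, since it depends only on $y-x$. The phase gives $-\partial_{x_j}\varphi^A(y,x)$. To evaluate this, I would use the antisymmetry \eqref{eq:phase anti-symmetry}, $\varphi^A(y,x)=-\varphi^A(x,y)$, together with \eqref{eq:magnetic phase property} applied with the roles of the variables swapped. This gives
$$
-\partial_{x_j}\varphi^A(y,x)=\partial_{x_j}\varphi^A(x,y)=A_j(x)-A_j(x,y).
$$

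Finally, $i\partial_{\xi_k}g^{A,\lambda}_{x,\xi}=-(y_k-x_k)g^{A,\lambda}_{x,\xi}$.

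\textbf{Step 2: reduce to a scalar identity.} Substituting into the right-hand side of \eqref{eq:wavepacket differentiation}, the $\xi_j$ and Gaussian terms reproduce $-i\partial_{y_j}g^{A,\lambda}_{x,\xi}$. What remains is to show that the leftover multiplicative factor vanishes:
$$
A_j(x)-A_j(x,y)+\sum_k B_{jk}(x)(y_k-x_k)+A_j(y)-A_j(x)-r^{(1)}_x(A_j(y,x))+r^{(1)}_x(A_j(x,y)) = A_j(y)-A_j(y,x).
$$
After cancelling $A_j(y)$ and $A_j(x)$, this reduces to
$$
A_j(y,x)-A_j(x,y)+\sum_kB_{jk}(x)(y_k-x_k)=r^{(1)}_x(A_j(y,x))-r^{(1)}_x(A_j(x,y)).
$$

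\textbf{Step 3: first-order Taylor expansions in $y$ at $x$.} This is the only step requiring care: the two potentials $A_j(\cdot,x)$ and $A_j(x,\cdot)$ depend on $y$ differently, and one has to track signs.

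For $A_j(y,x)=-\sum_k\int_0^1 s(y_k-x_k)B_{jk}(x+s(y-x))\,ds$:
\begin{itemize}
\item it vanishes at $y=x$;
\item its gradient at $y=x$ is $-\tfrac12B_{jl}(x)$;
\item hence $A_j(y,x)=-\tfrac12\sum_kB_{jk}(x)(y_k-x_k)+r^{(1)}_x(A_j(y,x))$.
\end{itemize}

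For $A_j(x,y)=-\sum_k\int_0^1 s(x_k-y_k)B_{jk}(y+s(x-y))\,ds$, viewed as a function of $y$:
\begin{itemize}
\item it also vanishes at $y=x$;
\item only the derivative falling on the prefactor $(x_k-y_k)$ survives at $y=x$;
\item hence $A_j(x,y)=\tfrac12\sum_kB_{jk}(x)(y_k-x_k)+r^{(1)}_x(A_j(x,y))$.
\end{itemize}

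Subtracting the two expansions gives
$$
A_j(y,x)-A_j(x,y)=-\sum_kB_{jk}(x)(y_k-x_k)+r^{(1)}_x(A_j(y,x))-r^{(1)}_x(A_j(x,y)),
$$
which is exactly the reduced identity from Step 2. This completes the proof.
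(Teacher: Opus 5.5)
Your proposal is correct and follows essentially the same route as the paper. Like the paper, you differentiate $e^{i\varphi^A(y,x)}$ in $y$ and $x$ via \eqref{eq:magnetic phase property} and \eqref{eq:phase anti-symmetry}, observe that the modulation and Gaussian factors depend only on $y-x$, Taylor-expand $A_j(y,x)$ and $A_j(x,y)$ in $y$ at $x$ (with gradients $-\frac{1}{2}B_{jl}(x)$ and $+\frac{1}{2}B_{jl}(x)$), and absorb the linear term using $\partial_{\xi_k}g^{A,\lambda}_{x,\xi}=i(y_k-x_k)g^{A,\lambda}_{x,\xi}$; your explicit check of the gradient of $A_j(x,\cdot)$ at $y=x$ fills in a step the paper only summarizes as ``expanding also $A_j(x,y)$''.
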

	
	\begin{proof}
		By  property  \eqref{eq:magnetic phase property} and  \eqref{eq:phase anti-symmetry} of $\varphi^{A}(y,x)$, we have
		$$
			\begin{gathered}
				\partial_{y_{j}} e^{i\varphi^{A}(y,x)} = i(A_{j}(y,0) - A_{j}(y,x))e^{i\varphi^{A}(y,x)},\\
				\partial_{x_{j}}  e^{i\varphi^{A}(y,x)} = i(- A_{j}(x,0) + A_{j}(x,y))e^{i\varphi^{A}(y,x)}.
			\end{gathered}
		$$
		A direct computation gives
		$$
		-i\partial_{y_{j}}g^{A,\lambda}_{x,\xi}(y) = i\partial_{x_{j}}g^{A,\lambda}_{x,\xi}(y) + [A_{j}(y)-A_{j}(x)-A_{j}(y,x)+A_{j}(x,y)]g^{A,\lambda}_{x,\xi}(y).
		$$
		Next, we use Taylor's formula to expand the term $-A_{j}(y,x)+A_{j}(x,y)$ with respect to the $y$ variable at the value $x$. Recall the definition of the potential
		$$
		A_j(y, x):=-\sum_{k=1}^d \int_0^1 s\left(y_k-x_k\right) B_{j k}(x+s(y-x)) d s,
		$$
		so evaluating $A_{j}$ at $y=x$ we have $A_{j}(x,x) = 0$. The first-order derivatives of $A_{j}$ are
		$$
		\begin{aligned}
			\partial_{y_{l}}A_{j}(y,x) = -\int^{1}_{0} sB_{jl}(x+s(y-x)) d s -\sum_{k=1}^d \int_0^1 s^{2}\left(y_k-x_k\right) \partial_{l}B_{j k}(x+s(y-x)) d s.
		\end{aligned}
		$$
		The derivatives at $y=x$ evaluate to
		\begin{equation}\label{eq:A_j^(1)}
			\partial_{y_{l}}A_{j}(x,x) = -\int^{1}_{0} sB_{jl}(x) d s = - \frac{1}{2} B_{jl}(x). 
		\end{equation}
		Further, the second-order derivatives are, for $1 \leq m,l\leq d$, 
		\begin{equation}\label{eq:A_j^(2)}
			\begin{aligned}
				\partial_{y_{m}}\partial_{y_{l}}A_{j}(y,x) & = -\int^{1}_{0}  s^{2}  \partial_{m}B_{jl}(x+s(y-x)) d s - \int_0^1 s^{2} \partial_{l}B_{j m}(x+s(y-x)) d s\\
				& \quad - \sum_{k=1}^d \int_0^1 s^{3}\left(y_k-x_k\right) \partial_{m}\partial_{l}B_{j k}(x+s(y-x)) d s,
			\end{aligned}
		\end{equation}
		so by \eqref{eq:A_j^(1)} and \eqref{eq:A_j^(2)}, Taylor's formula of $A_{j}$ takes the form
		$$
		A_{j}(y,x) = - \sum^{d}_{l=1}\frac{1}{2} B_{jl}(x)(y_{l}-x_{l}) + r^{(1)}_{x}(A_{j}(y,x)),
		$$
		where,  for  some $\theta \in (0,1)$, the remainder can be expressed by
		\begin{equation}\label{def:magnetic remainder}
			\begin{aligned}
				r^{(1)}_{x}(A_{j}(y,x)) & := \sum_{m,l=1}^{d} (y_{m}-x_{m})(y_{l}-x_{l}) \partial_{y_{m}}\partial_{y_{l}}A_{j}(x+\theta(y-x),x).
			\end{aligned}
		\end{equation}
		Expanding also $A_{j}(x,y)$ and combining the terms, we obtain
		$$
			- A_{j}(y,x) + A_{j}(x,y) = \sum_{l=1}^{d} B_{jl}(x)(y_{l}-x_{l}) - r^{(1)}_{x}(A_{j}(y,x)) + r^{(1)}_{x}(A_{j}(x,y)).
		$$
		The lemma then follows from the identity $\partial_{\xi_{j}}g^{A,\lambda}_{x,\xi}(y) = i(y_{j}-x_{j})g^{A,\lambda}_{x,\xi}(y)$.
	\end{proof}
	
  We build upon the above lemma to show that the action of $\operatorname{Op}^{A}(h)$ on wavepackets $g^{A,\lambda}_{x,\xi}$ can be approximated, up to some error terms, as a phase space differentiation along the magnetic Hamiltonian vector field associated to $h$. The lemma below can hold for more general $h$, but for  simplicity we formulate it for the most relevant symbol class $S^{0,(2)}(\mathbb{R}^{2d})$.
	
	\begin{lem}\label{lem:flat approximation}
 Let $h \in S^{0,(2)}(\mathbb{R}^{2d})$. 		The magnetic pseudo-differentiation $\operatorname{Op}^{A}(h)$ on wavepackets $g_{x,\xi}^{A,\lambda}$ can be approximated by a phase space differentiation $\tilde{H}^{B}$. That is, 
		\begin{equation}\label{eq:conjugation calculation}
			\begin{aligned}
				\operatorname{Op}^{A}(h)g_{x,\xi}^{A,\lambda}(y) = \left(i\tilde{H}^{B} +\mathcal{R}\right)g^{A,\lambda}_{x,\xi}(y), \qquad  y \in \R^d,  
			\end{aligned}
		\end{equation}
		where the components of $\tilde{H}^{B}$ are
		$$
		\tilde{H}^{B}_{j} := \left(\partial_{\xi_{j}}h\partial_{x_{j}},-\partial_{x_{j}}h\partial_{\xi_{j}}+\sum_{k=1}^{d}B_{kj}(x)\partial_{\xi_{k}}h\partial_{\xi_{j}}\right),\quad 1\leq j \leq d.
		$$
		The approximation error $\mathcal{R}$ term consists of the three parts
		$$
		\mathcal{R} = m^{A}(h) + \mathcal{R}_{1} + \mathcal{R}_{2},
		$$
 as defined in  (\ref{def:m_h}), (\ref{def:R_1}), and (\ref{def:R_2}),   where $m^{A}(h)$ denotes a multiplier that only depends on the phase space variables $x, \xi$, while $\mathcal{R}_{1}$, $\mathcal{R}_{2}$ denotes respectively a multiplicative operator and a magnetic pseudo-differential operator on the $y$ variable.
	\end{lem}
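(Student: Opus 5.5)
The plan is to convert the Weyl operator to the magnetic Kohn--Nirenberg form and then Taylor-expand the symbol around the centre of the wavepacket. By Lemma \ref{lem:Weyl to KN} we write $\operatorname{Op}^{A}(h)=\operatorname{Op}^{A}_{KN}(b)$ with $b=h+a_r$, where $a_r\in S^{0,(0)}$; the contribution $\operatorname{Op}^A_{KN}(a_r)g^{A,\lambda}_{x,\xi}$ is placed into $\mathcal{R}_2$. For $\operatorname{Op}^A_{KN}(h)$ we expand $h(y,\eta)$ to second order around $(x,\xi)$:
\[
h(y,\eta)=h(x,\xi)+\partial_x h(x,\xi)\cdot(y-x)+\partial_\xi h(x,\xi)\cdot(\eta-\xi)+r^{(2)}_{x,\xi}(y,\eta).
\]
Because $h\in S^{0,(2)}$, the remainder $r^{(2)}$ is a symbol in the variables $(y,\eta)$ that is quadratic in $(y-x,\eta-\xi)$ and has bounded second and higher derivatives, uniformly in $(x,\xi)$. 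The corresponding operator $\operatorname{Op}^A_{KN}(r^{(2)}_{x,\xi})$ acting on $g^{A,\lambda}_{x,\xi}$ is the second piece of $\mathcal{R}_2$.

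The zeroth- and first-order parts are quantized exactly through \eqref{eq:basic quantization}. The constant $h(x,\xi)$ acts by multiplication. The term $\partial_xh(x,\xi)\cdot(y-x)$ becomes $\partial_x h\cdot(\operatorname{Q}-x)$, and the identity $\partial_{\xi_j}g^{A,\lambda}_{x,\xi}=i(y_j-x_j)g^{A,\lambda}_{x,\xi}$ turns this into $-i\partial_{x_j}h\,\partial_{\xi_j}g^{A,\lambda}_{x,\xi}$. The term $\partial_\xi h(x,\xi)\cdot(\eta-\xi)$ becomes $\partial_{\xi_j}h\,(\operatorname{P}^A_j-\xi_j)=\partial_{\xi_j}h\,(-i\partial_{y_j}-A_j(y)-\xi_j)$. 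Here we apply Lemma \ref{lem:substitution} to replace $-i\partial_{y_j}g^{A,\lambda}_{x,\xi}$ by
\[
i\partial_{x_j}g^{A,\lambda}_{x,\xi}-\sum_k B_{jk}(x)\,i\partial_{\xi_k}g^{A,\lambda}_{x,\xi}+\bigl[A_j(y)-A_j(x)-r^{(1)}_x(A_j(y,x))+r^{(1)}_x(A_j(x,y))\bigr]g^{A,\lambda}_{x,\xi}.
\]
The $A_j(y)$ terms cancel. The $\xi_j$ term does not vanish: the $x$-derivative of the factor $e^{i\xi\cdot(y-x)}$ produces $-i\xi_j$, so writing $i\partial_{x_j}g^{A,\lambda}_{x,\xi}$ brings in a compensating $\xi_j g^{A,\lambda}_{x,\xi}$, which combines with $-\xi_j$ and $-A_j(x)$ into a multiplier depending only on $(x,\xi)$. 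After relabelling indices with $B_{jk}=-B_{kj}$, the surviving derivative terms are $i\partial_{\xi_j}h\,\partial_{x_j}-i\partial_{x_j}h\,\partial_{\xi_j}+i\sum_k B_{kj}(x)\partial_{\xi_k}h\,\partial_{\xi_j}$, which is $i\tilde H^B$.

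The leftover terms are sorted as follows. Define $m^A(h)(x,\xi)$ as the collection of the pure phase-space multipliers, namely $h(x,\xi)$ and $-\partial_\xi h(x,\xi)\cdot A(x)$, together with any constants produced by the commutation above. Define $\mathcal{R}_1$ as the multiplication operator $\sum_j\partial_{\xi_j}h(x,\xi)\bigl[r^{(1)}_x(A_j(x,y))-r^{(1)}_x(A_j(y,x))\bigr]$ in $y$. Define $\mathcal{R}_2:=\operatorname{Op}^A_{KN}(a_r+r^{(2)}_{x,\xi})$. These definitions become \eqref{def:m_h}, \eqref{def:R_1} and \eqref{def:R_2}.

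The main obstacle is bookkeeping rather than analysis. One must check that applying $\operatorname{Op}^A_{KN}$ to an affine symbol with coefficients frozen at $(x,\xi)$ gives exactly $\operatorname{P}^A_j$ and $\operatorname{Q}_j$ with those frozen coefficients, which follows from \eqref{eq:basic quantization} and linearity. One must also track the signs and index conventions in the $B$-term so that it matches $\tilde H^B$ and the flow \eqref{eq:magnetic hamiltonian flow flat}. The $\lambda$-rescaling does not affect any of these identities.
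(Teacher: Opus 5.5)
Your route is the same as the paper's: pass to $\operatorname{Op}^A_{KN}$ via Lemma \ref{lem:Weyl to KN}, Taylor-expand $h$ to first order at $(x,\xi)$ with integral remainder, quantize the affine part exactly by \eqref{eq:basic quantization}, insert Lemma \ref{lem:substitution}, and sort the result into $m^A(h)$, $\mathcal{R}_1$ and $\mathcal{R}_2=\operatorname{Op}^A_{KN}(r^{(1)}_{x,\xi}(h)+h_r)$. The problems are in the two bookkeeping steps you yourself single out.

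\emph{The $\xi$-term.} There is no ``compensating'' $\xi_j g^{A,\lambda}_{x,\xi}$. Both $-i\partial_{y_j}$ and $i\partial_{x_j}$ produce $+\xi_j$ from $e^{i\xi\cdot(y-x)}$, and these are already matched inside the identity of Lemma \ref{lem:substitution}. So the $-\xi_j$ coming from $\eta_j-\xi_j$ survives on its own. The multiplier is therefore exactly $m^A(h)=h(x,\xi)-\partial_\xi h(x,\xi)\cdot(\xi+A(x))$, as in \eqref{def:m_h}. Taken literally, your cancellation would drop $-\partial_\xi h\cdot\xi$ while keeping the full transport term $i\partial_{\xi_j}h\,\partial_{x_j}$, which is inconsistent. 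Replace ``any constants produced by the commutation'' by this explicit formula.

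\emph{The $B$-term.} This cannot be matched to $\tilde H^B$ by relabelling. Lemma \ref{lem:substitution} contributes $-i\sum_{j,k}B_{jk}(x)\partial_{\xi_j}h\,\partial_{\xi_k}$. Swapping dummy indices gives $-i\sum_{j,k}B_{kj}(x)\partial_{\xi_k}h\,\partial_{\xi_j}=+i\sum_{j,k}B_{jk}(x)\partial_{\xi_k}h\,\partial_{\xi_j}$, which has the opposite sign to the term in $i\tilde H^B$. As a check, take $h=\frac12|\eta|^2$ and constant $B$: the transport field read off from $\operatorname{Op}^A(h)g^{A,\lambda}_{x,\xi}$ has $\xi$-component $B\xi$, whereas \eqref{eq:magnetic hamiltonian flow flat} gives $B^{T}\xi=-B\xi$. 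The paper's own combination step makes the same slip. Its source is that the transversal gauge \eqref{def:A} satisfies $\partial_kA_j-\partial_jA_k=-B_{jk}$, not the stated $+B_{jk}$. The correct identity has $\sum_k B_{jk}(x)\partial_{\xi_k}h$ in the $\xi_j$-component of $\tilde H^B$ and of the flow; equivalently, flip the sign in \eqref{def:A}. Nothing later is affected, since Lemmas \ref{lem:flow} and \ref{lem:t-averaging} and the volume preservation use only bounds on $B$ and its antisymmetry.
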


	\begin{proof}
		For  convenience of later proofs, we first convert to the magnetic Kohn-Nirenberg quantization $\operatorname{Op}_{KN}^{A}(h^{*})=\operatorname{Op}^{A}(h)$. By Lemma \ref{lem:Weyl to KN}, the symbol $h^{*} \in S^{0,(2)}(\mathbb{R}^{2d})$ is given by 
		\begin{equation}\label{def:KN symbol}
			h^{*}(y,\eta) := h(y,\eta) +  h_{r}(y,\eta),\quad h_{r} \in S^{0,(0)}(\mathbb{R}^{2d}),
		\end{equation}
		and we can represent $\operatorname{Op}^{A}(h)$ using the Kohn-Nirenberg quantization 
		$$
		\operatorname{Op}^{A}\left(h\right)g^{A,\lambda}_{x,\xi}(y) =  \left(\operatorname{Op}^{A}_{KN}\left(h\right) + \operatorname{Op}^{A}_{KN}\left(h_{r}\right)\right)g^{A,\lambda}_{x,\xi}(y).
		$$
		The $\operatorname{Op}^{A}_{KN}\left(h_{r}\right)$ will be included in the pseudo-differential remainder $\mathcal{R}_{2}$, since its symbol $h_{r}$ is nicely bounded. Our focus here is to approximate $\operatorname{Op}^{A}_{KN}\left(h\right)$, and we start with a Taylor expansion on $h$ at the point $(x,\xi)$, that is,
		\begin{equation}\label{eq:expansion part 1}
			\begin{aligned}
				& \quad \operatorname{Op}^{A}_{KN}(h)g^{A,\lambda}_{x,\xi}(y)\\
				& = \operatorname{Op}_{KN}^{A}\left[h(x,\xi) + \partial_{y}h(x,\xi)\cdot(y-x) + \partial_{\eta}h(x,\xi)\cdot(\eta-\xi) + r^{(1)}_{x,\xi}(h)(y,\eta)\right]g^{A,\lambda}_{x,\xi}(y)\\
				& = \operatorname{Op}_{KN}^{A}\left[ \partial_{y}h(x,\xi)\cdot(y-x) + \partial_{\eta}h(x,\xi)\cdot\eta + r^{(1)}_{x,\xi}(h)(y,\eta)\right]g^{A,\lambda}_{x,\xi}(y)\\
				& \quad + (h(x,\xi) - \partial_{\eta}h(x,\xi)\cdot \xi)g^{A,\lambda}_{x,\xi}(y).
			\end{aligned}
		\end{equation}
		Here $r^{(1)}_{x,\xi}(h)$ denotes the remainder term from the first-order expansion:
		\begin{equation}\label{def:r1 remainder}
			\begin{aligned}
				r^{(1)}_{x,\xi}(h)(y,\eta) & := \int_{0}^{1}(1-s) \partial_s^{2} h(x+s(y-x), \xi+s(\eta-\xi))\ ds.
			\end{aligned}
		\end{equation}
		By the basic quantizations (\ref{eq:basic quantization}) and the identity (\ref{eq:wavepacket differentiation}) in Lemma \ref{lem:substitution}, we can calculate two magnetic $\Psi$DO terms on the right hand side of (\ref{eq:expansion part 1}) as
		\begin{equation}\label{eq:expansion part 2}
			\begin{aligned}
				\operatorname{Op}_{KN}^{A}\left[\partial_{y}h(x,\xi)\cdot(y-x)\right]g^{A,\lambda}_{x,\xi}(y) & = \partial_{y}h(x,\xi)\cdot(y-x)g^{A,\lambda}_{x,\xi}(y)\\
				& = -i\partial_{y}h(x,\xi)\cdot \partial_{\xi}g^{A,\lambda}_{x,\xi}(y),
			\end{aligned}
		\end{equation}
		and
		\begin{equation}\label{eq:expansion part 3}
			\begin{aligned}
				& \operatorname{Op}_{KN}^{A}\left[\partial_{\eta}h(x,\xi)\cdot\eta\right]g^{A,\lambda}_{x,\xi}(y)\\
				&\quad = \partial_{\eta}h(x,\xi)\cdot(-i\partial_{y} - A(y))g^{A,\lambda}_{x,\xi}(y)\\
				&\quad = \sum_{j=1}^{d} \partial_{\eta_{j}}h(x,\xi)\left(i\partial_{x_{j}}g^{A,\lambda}_{x,\xi}(y) - \sum_{k=1}^{d}B_{jk}(x)i\partial_{\xi_{k}}g^{A,\lambda}_{x,\xi}(y)\right) + \\
				&\quad \quad \quad + \sum_{j=1}^{d} \partial_{\eta_{j}}h(x,\xi)\left(-A_{j}(x) - r^{(1)}_{x}(A_{j}(y,x)) + r^{(1)}_{x}(A_{j}(x,y))\right)g^{A,\lambda}_{x,\xi}(y).
			\end{aligned}
		\end{equation}
		Then substituting (\ref{eq:expansion part 2}), (\ref{eq:expansion part 3}) into (\ref{eq:expansion part 1}) yields
		$$
		\begin{aligned}
			& \operatorname{Op}^{A}(h) g^{A,\lambda}_{x,\xi}(y)\\
			&\quad = i\sum_{j=1}^{d}\partial_{\eta_{j}}h(x,\xi)\partial_{x_{j}}g^{A,\lambda}_{x,\xi}(y) -i  \sum_{j=1}^{d} \left(\partial_{y_{j}}h(x,\xi) - \sum_{k=1}^d B_{kj}(x)\partial_{\eta_{k}}h(x,\xi)\right)  \partial_{\xi_{j}}g^{A,\lambda}_{x,\xi}(y) +\\
			&\quad \quad + \left(m^{A}(h) + \mathcal{R}_{1} + \mathcal{R}_{2}\right)g^{A,\lambda}_{x,\xi}(y)
		\end{aligned}
		$$
		with a multiplicative factor $m^{A}(h)$ which only depends on the variable $x,\xi$
		\begin{equation} \label{def:m_h}
			\begin{aligned}
				m^{A}(h)(x,\xi) := h(x,\xi) - \partial_{\eta}h(x,\xi)\cdot (\xi+A(x)),
			\end{aligned}
		\end{equation}
		and two other error terms involving the $y$ variable, including one multiplicative $\mathcal{R}_{1}$,
		\begin{equation}\label{def:R_1}
			\begin{aligned}
				\mathcal{R}_{1}g^{A,\lambda}_{x,\xi}(y) & :=  \partial_{\eta}h(x,\xi)\cdot\left( - r^{(1)}_{x}(A(y,x)) + r^{(1)}_{x}(A(x,y))\right)g^{A,\lambda}_{x,\xi}(y)
			\end{aligned}
		\end{equation}
		and one magnetic pseudo-differential $\mathcal{R}_{2}$,
		\begin{equation}\label{def:R_2}
			\mathcal{R}_{2}g^{A,\lambda}_{x,\xi}(y) := \operatorname{Op}_{KN}^{A}\left(r^{(1)}_{x,\xi}(h) + h_{r}\right)g^{A,\lambda}_{x,\xi}(y).
		\end{equation}
	\end{proof}
	
	We observe that in the above lemma, the phase space vector field $\tilde{H}^{B}$ coincides with the Hamiltonian vector field of the  magnetic  flow $\chi_{B}(h)$ in (\ref{eq:magnetic hamiltonian flow flat}).  This makes it possible for us to relate the quantum evolution associated to $\operatorname{Op}^{A}(h)$ with the classical flow $\chi_{B}(h)$ associated to $h$.

	\subsection{Propagation along the phase space flow}
	
	Based on the representation (\ref{eq:conjugation calculation}) in Lemma \ref{lem:flat approximation},  we see that  if a time-dependent wavepacket $w$ satisfies 
	$$
	D_{t}w(t;x,\xi) + (i\tilde{H}^{B} + m^{A}(h) + \mathcal{R})w(t;x,\xi) = 0, \quad w(t;x,\xi) = g^{A,\lambda}_{x,\xi},
	$$
	on the phase space $\mathbb{R}^{2d}$, then it will also satisfy
	$$
	(D_{t} + \operatorname{Op}^{A}(h))w(t,x,\xi;y) = 0
	$$
	 on the configuration space $\mathbb{R}^{d}$.  
	A solution $u(t)$ to (\ref{eq:electro-magnetic weyl schrodinger}) can then be constructed as a superposition of such wavepackets $w$    with the initial data $\tilde{u}_{0} := \mathcal{T}^{A}_{\lambda}u_{0}$, that is 
	$$
	u(t;y) = \int_{\mathbb{R}^{2d}} w(t,y;x,\xi)\tilde{u}_{0}(x,\xi) \ dxd\xi.
	$$
	However,  the existence of such $w$ is unclear  due to the complicated error term $\mathcal{R}w$. We instead first show the existence of a wavepacket solution to a simpler equation.
	
	\begin{lem}\label{lem:wavepacket solution}
		For $h$ satisfying Assumption \ref{asmp:h}, with associated magnetic Hamiltonian vector field $\tilde{H}^{B}$ and $m^{A}(h)$ defined  in \eqref{def:m_h}, there exists a wavepacket $w \in C(\mathbb{R},C^{\infty}(\mathbb{R}^{2d}))$, which satisfies the equation
		$$
			\begin{cases}
				 D_{t}w(t;x,\xi) + (i\tilde{H}^{B} + m^{A}(h))w(t;x,\xi) = 0,\\
				w(0;x,\xi) = g^{A,\lambda}_{x,\xi}.
			\end{cases}
		$$
	\end{lem}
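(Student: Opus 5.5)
The equation in the statement is a first-order linear transport equation on phase space, parametrized by $y$. I will solve it by the method of characteristics along the magnetic Hamiltonian flow $\chi_B(h)$ from Lemma \ref{lem:flow}.

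Writing $D_t=-i\partial_t$, the equation reads
\[
\partial_t w=i\,(i\tilde H^B+m^A(h))w=-\tilde H^B w+i\,m^A(h)w.
\]
The operator $\tilde H^B$ is exactly the vector field $\Phi_{h(t)}$ from the proof of Lemma \ref{lem:flow}, acting in the variables $(x,\xi)$. Thus the equation says that, along integral curves of the flow, $w$ is transported and multiplied by a phase.

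Concretely, fix $y$ and a target point $(x,\xi)$ at time $t$. Let $(x^s,\xi^s):=\chi_B(h)(s,t)(x,\xi)$ be the backward trajectory, so that $(x^t,\xi^t)=(x,\xi)$. I set
\[
w(t;x,\xi)(y):=\exp\Big(i\int_0^t m^A(h(\sigma))(x^\sigma,\xi^\sigma)\,d\sigma\Big)\,g^{A,\lambda}_{x^0,\xi^0}(y).
\]
Lemma \ref{lem:flow} guarantees that the flow exists globally. This formula is well defined for all $t\in\mathbb R$, and it reduces to $g^{A,\lambda}_{x,\xi}$ at $t=0$.

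The verification uses the flow identity $F(t,\chi_B(h)(t,0)z)=F(0,z)$. Define $W(t,z):=w(t;\chi_B(h)(t,0)z)$. By the chain rule,
\[
\frac{d}{dt}W=(\partial_t w+\Phi_{h(t)}\cdot\nabla_{x,\xi}w)\big|_{\chi(t,0)z}.
\]
On the other hand, the explicit formula gives
\[
W(t,z)=e^{i\int_0^t m^A(h)(\chi(\sigma,0)z)\,d\sigma}\,g^{A,\lambda}_{z}(y),
\]
so $\frac{d}{dt}W=i\,m^A(h)W$. Comparing the two expressions yields
\[
\partial_t w+\tilde H^B w=i\,m^A(h)w,
\]
which is exactly the stated equation.

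It remains to check regularity, which I expect to be the main obstacle. The required statements are:

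- The map $(t,x,\xi)\mapsto\chi_B(h)(0,t)(x,\xi)$ must be continuous in $t$ and $C^\infty$ in $(x,\xi)$. This follows from smooth dependence of ODE solutions on initial data: the vector field $\Phi_{h(t)}$ is continuous in $t$ and smooth in $(x,\xi)$, with derivatives of order at least one bounded, since $h\in S^{0,(2)}$ and $B$ has bounded derivatives.
- The coefficient $m^A(h)=h-\partial_\eta h\cdot(\xi+A(x))$ must be smooth in $(x,\xi)$ and continuous in $t$, so that its integral along trajectories is smooth. This holds because $A$ is smooth by \eqref{def:A}.
- The map $(x,\xi)\mapsto g^{A,\lambda}_{x,\xi}(y)$ must be smooth jointly in $(x,\xi,y)$. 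This follows from the smoothness of $\varphi^A$.

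Composing these maps gives $w\in C(\mathbb R,C^\infty(\mathbb R^{2d}))$. Since $h$ is only $C^1$ in $t$, $w$ is only $C^1$ in $t$ along each trajectory. This regularity suffices for the equation to hold pointwise in the classical sense.
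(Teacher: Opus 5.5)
Your proof has a sign error in the first step, and it carries through the whole construction. Since $D_t=-i\partial_t$, one has $\partial_t=iD_t$. So $D_tw=-(i\tilde H^B+m^A(h))w$ gives $\partial_t w=-i(i\tilde H^B+m^A(h))w=\tilde H^B w-i\,m^A(h)w$, not $i(i\tilde H^B+m^A(h))w$. Your verification is internally correct, but what it proves is $\partial_t w+\tilde H^B w=i\,m^A(h)w$, which is equivalent to $D_tw-(i\tilde H^B+m^A(h))w=0$. Both the transport term and the multiplier have the opposite sign to the statement, so your $w$ does not satisfy the stated equation (a function satisfying both would have to be independent of $t$, which yours is not). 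Concretely, your wavepacket is centred at $\chi_B(h)(0,t)(x,\xi)$, i.e.\ it is transported backwards along the magnetic flow, and it carries the phase $e^{+i\int m^A(h)}$. The equation requires forward transport and the phase $e^{-i\int m^A(h)}$.

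The paper's proof works trajectory-wise. It sets $w(t;x,\xi)=e^{i\psi(t;x,\xi)}g^{A,\lambda}_{x^t,\xi^t}$, with $(x^t,\xi^t)=\chi_B(h)(t,0)(x,\xi)$ and $\psi(t;x,\xi)=-\int_0^t m^A(h)(\tau;x^\tau,\xi^\tau)\,d\tau$, and differentiates in $t$ at fixed label $(x,\xi)$. By the chain rule and \eqref{eq:magnetic hamiltonian flow flat}, $\partial_t g^{A,\lambda}_{x^t,\xi^t}=\tilde H^B g^{A,\lambda}_{x^t,\xi^t}$, with $\tilde H^B$ and $m^A(h)$ evaluated at the moving centre. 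This is the reading needed in Proposition~\ref{prop:parametrix}. There, Lemma~\ref{lem:flat approximation} produces $i\tilde H^B+m^A(h)$ at the actual centre of the wavepacket, and this cancels against $D_t$ only if the centre solves $\dot z=\Phi_{h(t)}(z)$ and the phase is $e^{i\psi}$.

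Merely fixing the sign in your Eulerian reading (with $\Phi_{h(t)}(x,\xi)\cdot\nabla_{x,\xi}$ acting on the label) is not quite enough either, because the characteristics then become integral curves of $-\Phi_{h(t)}$. For time-independent $h$ this reproduces exactly the paper's $w$. For time-dependent $h$, which Assumption~\ref{asmp:h} allows, the centre becomes the time-$t$ image of $(x,\xi)$ under the flow of the time-reversed field $\tau\mapsto\Phi_{h(t-\tau)}$. In general this is not $\chi_B(h)(t,0)(x,\xi)$, so that $w$ would not make $(D_t+\operatorname{Op}^A(h))\tilde S(t,s)$ reduce to $K(t,s)$. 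Building $w$ directly along forward trajectories, as the paper does, fixes both issues, and your regularity discussion then applies essentially unchanged.
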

	
	\begin{proof}
		By Lemma \ref{lem:flow}, the magnetic Hamiltonian flow $\chi_{B}(h)(t,0): (x,\xi) \mapsto (x^{t},\xi^{t})$ associated to $h$ exists globally, and we can construct a time-dependent wavepacket by setting
		$$
		g^{A,\lambda}_{x^{t},\xi^{t}}(y) := \chi_{B}(h)(t,0)g^{A,\lambda}_{x,\xi}(y),\quad y \in \mathbb{R}^{d}.
		$$
		Then this wavepacket $g^{A,\lambda}_{x^{t},\xi^{t}}$ satisfies
		$$
		\partial_{t}g^{A,\lambda}_{x^{t},\xi^{t}}(y) = \tilde{H}^{B}g^{A,\lambda}_{x^{t},\xi^{t}}(y),\quad g^{A,\lambda}_{x^{t},\xi^{t}}\big|_{t=0}(y) = g^{A,\lambda}_{x,\xi}(y).
		$$
		We can also construct a phase term $e^{i\psi(t)}$, with a phase function $\psi$ based on $m^{A}(h)$, by
		$$
		\psi(t;x,\xi) := \int_{0}^{t} -m^{A}(h)(\tau;x^{\tau},\xi^{\tau})\ d\tau,
		$$
		so that $\psi(t)$ satisfies
		$$
		\partial_{t}\psi(t) = -m^{A}(h)(t;x^{t},\xi^{t}),\quad e^{i\psi(t)}\big|_{t=0} = 1.
		$$
	We	modulate the wavepacket with the phase term $e^{i\psi(t)}$ and   define 
		$$
			g^{A,\lambda}_{\psi}(t;y,x,\xi) := e^{i\psi(t)}g^{A,\lambda}_{x^{t},\xi^{t}}(y).
		$$
	The modulated wavepacket $g^{A,\lambda}_{\psi}$ now satisfies
		\begin{equation}\label{eq:t-derivative}
			\begin{aligned}
				\partial_{t}g^{A,\lambda}_{\psi}(t;y,x,\xi) & = i\partial_{t}\psi(t)e^{i\psi(t)}g^{A}_{x^{t},\xi^{t}}(y) + e^{i\psi(t)}\partial_{t}g^{A,\lambda}_{x^{t},\xi^{t}}(y)\\
				& = -im^{A}(h)e^{i\psi(t)}g^{A,\lambda}_{x^{t},\xi^{t}}(y) + e^{i\psi(t)}\tilde{H}^{B}g^{A,\lambda}_{x^{t},\xi^{t}}(y)\\
				& = \left(-im^{A}(h) + \tilde{H}^{B}\right)g^{A,\lambda}_{\psi}(t;y,x,\xi),
			\end{aligned}
		\end{equation}
		with $g^{A,\lambda}_{\psi}(0;y,x,\xi) = g^{A,\lambda}_{x,\xi}(y)$ as desired.
	\end{proof}
	
	Using the wavepacket solution $g^{A,\lambda}_{\psi}(t)$ from the above lemma, we can construct a parametrix to  equation (\ref{eq:electro-magnetic weyl schrodinger}).
	
	\begin{prop}\label{prop:parametrix}
		Let $0\leq s,t \leq T$. We define the operator $\tilde{S}(t,s)$ by 
		\begin{equation}\label{def:S tilde}
			\tilde{S}(t,s)u_{0}(y) := \int_{\mathbb{R}^{2d}\times\mathbb{R}^{d}} g^{A,\lambda}_{\psi}(t;y,x,\xi) \overline{g^{A,\lambda}_{\psi}}(s;y',x,\xi)u_{0}(y')\ dy'dxd\xi,
		\end{equation}
		for $u_{0} \in \mathcal{S}(\mathbb{R}^{d})$. We also define the operator $K(t,s)$ by
		\begin{equation}\label{def:K}
			K(t,s)u_{0}(y) := \int (\mathcal{R}_{1}+\mathcal{R}_{2})g^{A,\lambda}_{\psi}(t;y,x,\xi) \overline{g^{A,\lambda}_{\psi}}(s;y',x,\xi)u_{0}(y')\ dy'dxd\xi,
		\end{equation}
		for $u_{0} \in \mathcal{S}(\mathbb{R}^{d})$. We then have
		\begin{equation}\label{eq:parametrix equation}
			(D_{t}+\operatorname{Op}^{A}(h))\tilde{S}(t,s)u_{0} = K(t,s)u_{0},\quad u_{0} \in \mathcal{S}(\mathbb{R}^{d}).
		\end{equation}
		Moreover,  for  $1 \leq p \leq \infty,  m \in \mathbb{R}$ and fixed $t,s$,  the operator $\tilde{S}(t,s)$ is bounded on $M^{m,p}_{A}(\mathbb{R}^{d})$. The operators $\tilde{S}(t,\cdot)$ and $\tilde{S}(\cdot,s)$ are also strongly continuous on $M^{m,p}_{A}(\mathbb{R}^{d})$. 
	\end{prop}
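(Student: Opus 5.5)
The proof has two parts: the parametrix identity \eqref{eq:parametrix equation}, and the boundedness and continuity of $\tilde{S}(t,s)$. For the identity, I would differentiate \eqref{def:S tilde} in $t$ under the integral sign, which is justified for $u_0\in\mathcal{S}$ because of the Gaussian localisation of the wavepackets. Only the factor $g^{A,\lambda}_{\psi}(t;y,x,\xi)$ depends on $t$, and by \eqref{eq:t-derivative} its $t$-derivative is $(-im^{A}(h)+\tilde H^{B})g^{A,\lambda}_{\psi}$. Hence $D_t g_\psi = -(m^A(h)+i\tilde H^B)g_\psi$ pointwise.

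Next I would apply Lemma \ref{lem:flat approximation} at the moving point $(x^t,\xi^t)$, multiplied by the phase $e^{i\psi(t)}$. This gives $\operatorname{Op}^A(h)g_\psi=(i\tilde H^B+m^A(h)+\mathcal{R}_1+\mathcal{R}_2)g_\psi$, where $\tilde H^B$ and $m^A(h)$ are now evaluated along the flow. The point to check is that $\tilde H^B g_{x^t,\xi^t}$, read as a directional derivative in the phase space parameters, coincides with $\partial_t g_{x^t,\xi^t}$. This holds by \eqref{eq:magnetic hamiltonian flow flat} and the chain rule, and it is exactly how the vector field was used in Lemma \ref{lem:wavepacket solution}. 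Adding the two formulas, the $\tilde H^B$ and $m^A(h)$ terms cancel and $(D_t+\operatorname{Op}^A(h))g_\psi=(\mathcal{R}_1+\mathcal{R}_2)g_\psi$. Integrating against $\overline{g_\psi}(s;y',x,\xi)u_0(y')$ then yields $K(t,s)u_0$. Moving $\operatorname{Op}^A(h)$ inside the $(x,\xi)$-integral is legitimate because $\operatorname{Op}^A(h)$ maps $M^{m,p}_A$ continuously into $M^{m-2,p}_A$ (Lemma \ref{lem:Op^A boundedness}).

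For the boundedness, I would rewrite $\tilde S(t,s)$ in phase space. Since $|e^{i\psi}|=1$, the $(y',x,\xi)$-integral gives
\[
\tilde S(t,s)u_0=\int e^{i(\psi(t)-\psi(s))}\,g^{A,\lambda}_{\chi(t,0)(x,\xi)}\;\overline{\mathcal{T}^{A}_\lambda u_0}\text{-type coefficient}\ dxd\xi .
\]
Concretely, the coefficient is $\langle g^{A,\lambda}_{\chi(s,0)(x,\xi)},u_0\rangle=\mathcal{T}^A_\lambda u_0(\chi(s,0)(x,\xi))$. Changing variables $(x',\xi')=\chi(s,0)(x,\xi)$, which has Jacobian one by Lemma \ref{lem:flow}, gives
\[
\tilde S(t,s)=\mathcal{T}^{A*}_\lambda\, M_{t,s}\,\mathcal{T}^{A}_\lambda ,
\]
where $M_{t,s}F(x',\xi')$ is defined only on phase space functions: it multiplies $F$ by a unimodular phase and transports it by the volume-preserving flow map $\chi(t,s)$. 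The estimate then reduces to three facts. First, $M_{t,s}$ is bounded on weighted $L^p(\mathbb{R}^{2d})$ with weight $\nu_{\lambda,m}$: composition with $\chi(t,s)$ preserves $L^p$ norms by volume preservation, and the weights are comparable because of the two-sided bounds \eqref{eq:Gronwall} and \eqref{eq:Gronwall backward}, which give $\nu_{m}(\chi(t,s)z)\le C_T\nu_{|m|}$-type control. Second, $\mathcal{T}^{A*}_\lambda$ maps weighted $L^p$ back into $M^{m,p}_A$. This requires the reproducing kernel $\langle g^{A}_{z,\zeta},g^{A}_{x,\xi}\rangle$ to have Gaussian off-diagonal decay in $|z-x|$ and $|\zeta-\xi|$, followed by Young's inequality as in the proof of Lemma \ref{lem:weighted kernel bound}; the magnetic phase $e^{i\Gamma^B}$ with \eqref{eq:flux bound} costs only polynomial factors in $|y-x|$, which the Gaussian absorbs after integration by parts in $y$. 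Third, $\mathcal{T}^A_\lambda$ is bounded from $M^{m,p}_A$ into weighted $L^p$ by definition.

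For strong continuity, I would first show that $t\mapsto\tilde S(t,s)u_0$ is continuous in $M^{m,p}_A$ for Schwartz $u_0$ and $p<\infty$. This follows by dominated convergence, using continuity of the flow and of $\psi$ in $t$ together with uniform Gaussian domination. The uniform bound over $t,s\in[0,T]$ then extends continuity to all of $M^{m,p}_A$ by density (Proposition \ref{prop:magnetic modulation space}(iii)). For $p=\infty$ the space is defined as the Schwartz closure, so the same density argument applies. The main obstacle I expect is the kernel estimate for $\mathcal{T}^{A*}_\lambda$ on weighted $L^p$. One must control the magnetic phase in the wavepacket overlap uniformly in the base points, while the flow has merely moved the centres without changing the Gaussian shape; if that route fails, I would instead bound $\langle g_{z,\zeta},\tilde S g_{x,\xi}\rangle$ directly and invoke Lemma \ref{lem:weighted kernel bound} with the composed off-diagonal decay.
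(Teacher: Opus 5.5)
Your proposal is correct and follows essentially the paper's argument. The factorization $\tilde S(t,s)=\mathcal{T}^{A*}_\lambda M_{t,s}\mathcal{T}^A_\lambda$ is an operator-level repackaging of the paper's formula for $\mathcal{T}^A_\lambda\tilde S(t,s)u_0$ through the kernel $G^{A,\lambda}$, and it uses the same ingredients: volume preservation, two-sided Gronwall comparability of the weights, off-diagonal decay of $G^{A,\lambda}$ from the flux bounds and integration by parts, and Young's inequality. Your continuity argument is actually more complete than the paper's pointwise limit as $t\to s$, but for $p=\infty$ you still need norm continuity on Schwartz data. You can get it from the $p=1$ case via $\|u\|_{M^{m,\infty}_A}\lesssim\|u\|_{M^{m,1}_A}$, which follows from the reproducing formula and the bounded, weight-moderate kernel $G^{A,\lambda}$.
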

	
	\begin{proof}
		 By construction of  $\tilde{S}$ based on  the wavepacket solution $g^{A,\lambda}_{\psi}(t)$, we can calculate using Lemma \ref{lem:flat approximation} and (\ref{eq:t-derivative})
		$$
			\begin{aligned}
				\operatorname{Op}^{A}(h) g^{A,\lambda}_{\psi}(t;y,x,\xi) & = e^{i\psi(t)}(i\tilde{H}^{B} + \mathcal{R}) g^{A,\lambda}_{x^{t},\xi^{t}}(y)\\
				& = (i\tilde{H}^{B} + m^{A}(h) + \mathcal{R}_{1}+\mathcal{R}_{2})g^{A,\lambda}_{\psi}(t;y,x,\xi),
			\end{aligned}
		$$
		and
		$$
		D_{t}(g^{A,\lambda}_{\psi})(t;y,x,\xi) = -(m^{A}(h)+i\tilde{H}^{B})g^{A,\lambda}_{\psi}(t;y,x,\xi).
		$$
		Then the equation (\ref{eq:parametrix equation}) can be verified based on the above calculations 
		$$
		\begin{aligned}
			& (D_{t}+\operatorname{Op}^{A}(h))\tilde{S}(t,s)u_{0}(y)\\
			&\quad = \int_{\mathbb{R}^{2d}\times\mathbb{R}^{d}} (D_{t}+\operatorname{Op}^{A}(h))(g^{A,\lambda}_{\psi})(t;y,x,\xi) \overline{g^{A,\lambda}_{\psi}}(s;y',x,\xi)u_{0}(y')\ dy'dxd\xi\\
			&\quad = \int_{\mathbb{R}^{2d}\times\mathbb{R}^{d}} (\mathcal{R}_{1}+\mathcal{R}_{2})(g^{A,\lambda}_{\psi})(t;y,x,\xi) \overline{g^{A,\lambda}_{\psi}}(s;y',x,\xi)u_{0}(y')\ dy'dxd\xi\\
			&\quad = K(t,s)u_{0}(y).
		\end{aligned}
		$$
		
		For the  $M^{m,p}_{A}(\mathbb{R}^{d})$ boundedness of $\tilde{S}(t,s)$, in view of Lemma \ref{lem:weighted kernel bound}, it suffices to show the phase space kernel of $\tilde{S}(t,s)$ has sufficient off-diagonal decay of the form $\langle \lambda(z-x^{t})\rangle^{-N}\langle \lambda^{-1}(\zeta-\xi^{t})\rangle^{-N'}$.  Note that  by definition the $M^{m,p}_{A}(\mathbb{R}^{d})$ norm of $\tilde{S}(t,s)u_{0}$  equals the weighted $L^{p}(\mathbb{R}^{2d})$ norm on the phase space
		$$
		\begin{aligned}
			\|\tilde{S}(t,s)u_{0}\|_{M^{m,p}_{A}(\mathbb{R}^{d})} & = \|\nu_{\lambda,m}\mathcal{T}^{A}_{\lambda}\tilde{S}(t,s)u_{0}\|_{L^{p}(\mathbb{R}^{2d})}.
		\end{aligned}
		$$
		 We fully write out the wavepacket transform $\mathcal{T}^{A}_{\lambda}\tilde{S}(t,s)u_{0}$ to see that 
		$$
		\begin{aligned}
			& \mathcal{T}^{A}_{\lambda}\tilde{S}(t,s)u_{0}(z,\zeta)\\
			&\quad = \int_{\mathbb{R}^{2d}} \int_{\mathbb{R}^{d}} \overline{g^{A,\lambda}_{z,\zeta}}(y)g^{A,\lambda}_{x^{t},\xi^{t}}(y)\ dy \left(e^{i\psi(t)-i\psi(s)} \int_{\mathbb{R}^{d}} \overline{g^{A,\lambda}_{x^{s},\xi^{s}}}(y')u_{0}(y')\ dy'\right)\ dxd\xi.
		\end{aligned}
		$$
		Denote by $G^{A,\lambda}$ the integral kernel
		\begin{equation}\label{def:G^A,lambda}
			G^{A,\lambda}(z,\zeta,x^{t},\xi^{t}) := \int_{\mathbb{R}^{d}} \overline{g^{A,\lambda}_{z,\zeta}}(y)g^{A,\lambda}_{x^{t},\xi^{t}}(y)\ dy.
		\end{equation}
		Then $	\mathcal{T}^{A}_{\lambda}\tilde{S}(t,s)u_{0}$ can be seen as associated to the phase space kernel $G^{A,\lambda}$ 
		$$
		\mathcal{T}^{A}_{\lambda}\tilde{S}(t,s)u_{0}(z,\zeta) = \int_{\mathbb{R}^{2d}} G^{A,\lambda}(z,\zeta,x^{t},\xi^{t}) e^{i\psi(t)-i\psi(s)} \tilde{u}_{0}(x^{s},\xi^{s})\ dxd\xi,
		$$
		 We can then write out the weighted version as
		$$
		\begin{aligned}
			& \nu_{\lambda,m}( z,\zeta)\mathcal{T}^{A}_{\lambda}\tilde{S}(t,s)u_{0}(z,\zeta)\\
			= & \int_{\mathbb{R}^{2d}} \nu_{\lambda,m}( z,\zeta)G^{A,\lambda}(z,\zeta,x^{t},\xi^{t}) \nu_{\lambda,-m}( x^{s},\xi^{s})e^{i\psi(t)-i\psi(s)} \nu_{\lambda,m}( x^{s},\xi^{s})\tilde{u}_{0}(x^{s},\xi^{s})\ dxd\xi,
		\end{aligned}
		$$
		and note the weighted phase space kernel is related to the unweighted $G^{A,\lambda}$ by
		\begin{equation}\label{eq:G^A,lambda weighted}
			\begin{aligned}
				& |\nu_{\lambda,m}( z,\zeta)G^{A,\lambda}(z,\zeta,x^{t},\xi^{t}) \nu_{\lambda,-m}(x^{s},\xi^{s})|\\
				\lesssim & \langle \lambda^{-1}(z-x^{t})\rangle^{m}\langle \lambda^{-1}(\zeta-\xi^{t})\rangle^{m}\nu_{\lambda,m}( x^{t},\xi^{t})\nu_{\lambda,-m}( x^{s},\xi^{s})|G^{A,\lambda}(z,\zeta,x^{t},\xi^{t})|\\
				\lesssim & C_{T,h,B}^{m}\langle \lambda^{-1}(z-x^{t})\rangle^{m}\langle \lambda^{-1}(\zeta-\xi^{t})\rangle^{m}|G^{A,\lambda}(z,\zeta,x^{t},\xi^{t})|,
			\end{aligned}
		\end{equation}
		where we used the flow property \eqref{eq:Gronwall} to bound $\nu_{\lambda,m}( x^{t},\xi^{t})\nu_{\lambda,-m}( x^{s},\xi^{s}) \lesssim C_{T,h,B}^{m}$.
		
		 By the volume-preservation property in Lemma \ref{lem:flow}, we can replace $dxd\xi$ with $dx^{s}d\xi^{s}$, so that 
		$$
		\begin{aligned}
			\|\nu_{\lambda,m}e^{i\psi(t)-i\psi(s)} \tilde{u}_{0}\|_{L^{p}(\mathbb{R}^{2d})} & = \left(\int_{\mathbb{R}^{2d}} |\nu_{\lambda,m}( x^{s},\xi^{s})e^{i\psi(t)-i\psi(s)} \tilde{u}_{0}(x^{s},\xi^{s})|^{p}\ dx^{s}d\xi^{s}\right)^{1/p}\\
			& = \|u_{0}\|_{M^{m,p}_{A}(\mathbb{R}^{d})}.
		\end{aligned}
		$$
		So by Young's inequality for integral operators and the relation \eqref{eq:G^A,lambda weighted}, the $M^{m,p}_{A}(\mathbb{R}^{d})$ boundedness of $\tilde{S}(t,s)$ will be implied by the  off-diagonal decay of the kernel
		\begin{equation}\label{eq:G^A,lambda bound}
			|G^{A,\lambda}(z,\zeta,x^{t},\xi^{t})| \lesssim \langle \lambda(z-x^{t})\rangle^{-N} \langle \lambda^{-1}(\zeta-\xi^{t})\rangle^{-N'}, 
		\end{equation}
 for sufficiently large $N,N' \in \mathbb{N}$. 
		To show the above bound,  we notice that based on (\ref{def:magnetic flux}), the product of the two wavepackets $\overline{g^{A,\lambda}_{z,\zeta}}$ and $g^{A,\lambda}_{x^{t},\xi^{t}}$ can be  complemented with a phase factor $e^{i\varphi^{A}(z,x^{t})}$ to make a factor $e^{i\Gamma^{B}}$ appear. That is, 
		$$
			\begin{aligned}
				\quad &  e^{-i\varphi^{A}(z,x^{t})}e^{-i\zeta\cdot(z-x^{t})}\overline{g^{A,\lambda}_{z,\zeta}}(y) g^{A,\lambda}_{x^{t},\xi^{t}}(y)\\
				& = \lambda^{d} e^{i (\xi^{t}-\zeta) \cdot (y-x^{t})}  e^{-i \varphi^A(y, z)} e^{i \varphi^A(y, x^{t})} e^{-i\varphi^{A}(z,x^{t})} g(\lambda(y-z))g(\lambda(y-x^{t})) \\
				& = \lambda^{d} e^{i (\xi^{t}-\zeta) \cdot (y-x^{t})}  e^{i\Gamma^{B}(y,x^{t},z)} g(\lambda(y-z))g(\lambda(y-x^{t})).
			\end{aligned}
		$$
		We can then make use of the derivative bounds (\ref{eq:flux bound}) of  $e^{i\Gamma^{B}}$,   which yield  for $\lambda \geq 1$
		$$
			|\partial_{\lambda y}^{\alpha} e^{i\Gamma^{B}}(y,x^{t},z)| \leq C_{B,\alpha}(\langle \lambda(y-x^{t})\rangle + \langle \lambda(y-z)\rangle)^{|\alpha|},\qquad  |\alpha|\geq 1.
		$$
	 Denote part of the wavepacket product by
		\begin{equation}\label{def:wavepackets product W}
			W_{B,\lambda}(y,x^{t},z) := e^{i\Gamma^{B}(y,x^{t},z)} g(\lambda(y-z))g(\lambda(y-x^{t})).
		\end{equation}
		By the derivative bounds  of $e^{i\Gamma^{B}}$ above  and $g \in \mathcal{S}(\mathbb{R}^{d})$, we see that the partial derivatives of $W_{B,\lambda}$ with respect to $\lambda y$ can be bounded for all $N,N' \in \mathbb{N}$ by
		$$
			|\partial_{\lambda y}^{\alpha}W_{B,\lambda}(y,x^{t},z)| \leq C_{N,N',\alpha,B} \langle \lambda(y-z) \rangle^{-N} \langle \lambda(y-x^{t}) \rangle^{-N'}.
		$$
		Using $\langle \lambda(y-z)\rangle^{-N} \lesssim \langle \lambda(y-x^{t})\rangle^{N}\langle \lambda(z-x^{t})\rangle^{-N}$, we further obtain for all $N,N',N'' \in \mathbb{N}$
			\begin{equation}\label{eq:W derivatives}
			\begin{aligned}
				& |\partial_{\lambda y}^{\alpha}W_{B,\lambda}(y,x^{t},z)|\\
				& \quad \lesssim C_{N,N',N'',\alpha,B} \langle \lambda(z-x^{t})\rangle^{-N} \langle \lambda(y-z) \rangle^{-N'} \langle \lambda(y-x^{t}) \rangle^{-N''}.
			\end{aligned}
		\end{equation}
		We now re-write $G^{A,\lambda}$ with $W_{B,\lambda}$ as
		\begin{equation}\label{eq:G^A,lambda re-write}
			\begin{aligned}
				& G^{A,\lambda}(z,\zeta,x^{t},\xi^{t})\\
				& \quad = e^{i\varphi^{A}(z,x^{t})}e^{i\zeta\cdot(z-x^{t})} \int_{\mathbb{R}^{d}} e^{i (\xi^{t}-\zeta) \cdot (y-x^{t})} W_{B,\lambda}(y,x^{t},z)\ d\lambda y,
			\end{aligned}
		\end{equation}
		 and we can obtain a decay factor $\langle \lambda^{-1}(\zeta-\xi^{t})\rangle^{-N'}$ by integration by parts repeatedly with respect to the $\lambda y$-integral, relying on   the identity
		$$
			\langle \lambda^{-1}(\zeta-\xi^{t})\rangle^{-2}(1-\Delta_{\lambda y})  e^{i\lambda^{-1}(\xi^{t}-\zeta) \cdot \lambda y} = e^{i(\xi^{t}-\zeta) \cdot y}.
		$$
		Now combined with the $W_{B,\lambda}$ bounds \eqref{eq:W derivatives}, we see by integration by parts
		$$
		\langle \lambda(z-x^{t})\rangle^{N}\langle \lambda^{-1}(\zeta-\xi^{t})\rangle^{N'}|G^{A,\lambda}(z,\zeta,x^{t},\xi^{t})| \lesssim C_{B,N,N'}
		$$
		as desired.
		
		 For the strong continuity of $\tilde{S}$, by the fact that $\tilde{S}(t,s)$ is continuous on $M^{m,p}_{A}(\mathbb{R}^{d})$ for fixed $t,s$, it suffices to show the strong continuity on the dense subspace $\mathcal{S}(\mathbb{R}^{d})$ of $M^{m,p}_{A}(\mathbb{R}^{d})$.  Here we show the strong continuity for $\tilde{S}(\cdot,s)$, and the strong continuity of $\tilde{S}(t,\cdot)$ is analogous.  Since the flow $\chi_{B}(t,s)$ is continuous in $t$, and the wavepackets $g^{A,\lambda}_{\psi}$ are dominated by the Gaussian $g$, that is 
		$$
		|g^{A,\lambda}_{\psi}(\tau;y,x,\xi)| \leq |g(\lambda(y-x))|,
		$$
		we can use the dominated convergence theorem to see at the  limit $t \to s$, we recover the inversion formula in Proposition \ref{prop:magnetic modulation space}. That is,
		$$
		\begin{aligned}
			\lim_{t \to s} \tilde{S}(t,s)u_{0}(y) & =  \int_{\mathbb{R}^{2d}\times\mathbb{R}^{d}} \lim_{t \to s} g^{A,\lambda}_{\psi}(t;y,x,\xi) \overline{g^{A,\lambda}_{\psi}}(s;y',x,\xi)u_{0}(y')\ dy'dxd\xi\\
			& = \int_{\mathbb{R}^{2d}\times\mathbb{R}^{d}}  g^{A,\lambda}_{x^{s},\xi^{s}}(y) \overline{g^{A,\lambda}_{x^{s},\xi^{s}}}(y')u_{0}(y')\ dy'dx^{s}d\xi^{s}\\
			& = u_{0}(y),
		\end{aligned}
		$$
		so the strong continuity of $\tilde{S}$ holds for all $u_{0} \in \mathcal{S}(\mathbb{R}^{d})$ and by continuity of $\tilde{S}(t,s)$ and density of $\mathcal{S}(\mathbb{R}^{d})$ in $M^{m,p}_{A}(\mathbb{R}^{d})$ it holds on the whole space $M^{m,p}_{A}(\mathbb{R}^{d})$.
	\end{proof}

	\section{Modulation space well-posedness}
	
	We show the well-posedness of the generalized magnetic Schr\"odinger equation \eqref{eq:electro-magnetic weyl schrodinger} by constructing its propagator, based on   the parametrix construction in  equation \eqref{eq:parametrix equation} and  Duhamel's principle. For this we need the following definition of a new space and a lemma for the Volterra equation involving the operator $K$ defined in the Proposition \ref{prop:parametrix}.
	
	\begin{defi}\label{defi:t-dependent T}
		Define the time-dependent wavepacket transform $\mathcal{T}^{A}_{\lambda,t}$ on $u \in \mathcal{S}' ([0,T]\times\mathbb{R}^{d})$ by 
		$$
		\tilde{u}(t;x^{t},\xi^{t}) := \mathcal{T}^{A}_{\lambda,t}u(t;x^{t},\xi^{t}) := \int_{\mathbb{R}^{d}} \overline{g^{A,\lambda}_{x^{t},\xi^{t}}}(y)u(t;y) dy.
		$$
		For $1\leq p \leq \infty$  and $m \in \mathbb{R}$, we define the Banach spaces $\mathcal{M}^{m,p}_{A}(\mathbb{R}^{d},L^{1}([0,T]))$ characterized by the mixed $L^{p}$-norm of the transform $\tilde{u}$ on $[0,T]\times \mathbb{R}^{2d}$. That is, we set
\begin{alignat*}{2}
			\|u\|_{\mathcal{M}^{m,p}_{A}(\mathbb{R}^{d},L^{1}([0,T]))} 
			&:=  \left(\int_{\mathbb{R}^{2d}}\left(\nu_{\lambda,m}(x,\xi)\int_{[0,T]}\left|\tilde{u}(t;x^{t},\xi^{t})\right|dt\right)^{p}\ dxd\xi\right)^{1/p}, &\quad 1\leq p < \infty,\\
			\|u\|_{\mathcal{M}^{m,\infty}_{A}(\mathbb{R}^{d},L^{1}([0,T]))} 
			&:= \underset{(x,\xi) \in \mathbb{R}^{2d}}{\operatorname{ess\ sup}}\  \nu_{\lambda,m}(x,\xi) \int_{[0,T]}\left|\tilde{u}(t;x^{t},\xi^{t})\right|dt, &\quad p = \infty,
\end{alignat*}
		and define for $1\leq p\leq \infty$
		$$
		\mathcal{M}^{m,p}_{A}(\mathbb{R}^{d},L^{1}([0,T])) := \left\{u \in \mathcal{S}' ([0,T]\times\mathbb{R}^{d}) \bigg|\  \|u\|_{\mathcal{M}^{m,p}_{A}(\mathbb{R}^{d},L^{1}([0,T]))} < \infty \right\}.
		$$
	\end{defi}
	
	\begin{lem}\label{lem:volterra}
		 Let $T>0$ be  sufficiently small.  For every  $u_{0} \in M^{m,p}_{A}(\mathbb{R}^{d})$ there exists a unique solution $v \in \mathcal{M}^{m,p}_{A}(\mathbb{R}^{d},L^{1}([0,T]))$ to the Volterra equation
		\begin{equation}\label{eq:volterra}
			v(t;y) = -K(t,0)u_{0}(y) -i\int_{0}^{t}K(t,s)v(s;y)\ ds, \qquad  y \in \R^d,t>0.
		\end{equation}
		Moreover, for the solution $v$ we have
		\begin{equation}\label{eq:v bound}
			\|v\|_{\mathcal{M}^{m,p}_{A}(\mathbb{R}^{d},L^{1}([0,T]))} \lesssim C_{T,h,B}\|u_{0}\|_{M^{m,p}_{A}(\mathbb{R}^{d})}.
		\end{equation}
	\end{lem}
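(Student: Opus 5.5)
The plan is to solve the Volterra equation \eqref{eq:volterra} by a Neumann series, i.e.\ a Picard iteration, in the Banach space $\mathcal{M}^{m,p}_{A}(\mathbb{R}^{d},L^{1}([0,T]))$. Set $v_{0}:=-K(\cdot,0)u_{0}$ and $\mathcal{K}v(t):=-i\int_{0}^{t}K(t,s)v(s)\,ds$. The solution is then $v=\sum_{n\geq0}\mathcal{K}^{n}v_{0}$. Existence, uniqueness and the bound \eqref{eq:v bound} all follow from two estimates:
\begin{enumerate}[(a)]
\item $\|K(\cdot,0)u_{0}\|_{\mathcal{M}^{m,p}_{A}(\mathbb{R}^{d},L^{1}([0,T]))}\lesssim C_{T,h,B}\|u_{0}\|_{M^{m,p}_{A}}$;
\item $\|\mathcal{K}\|\leq c(T)<1$ on $\mathcal{M}^{m,p}_{A}(\mathbb{R}^{d},L^{1}([0,T]))$ for $T$ small.
\end{enumerate}

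Both estimates reduce to one phase space kernel bound for $K$. Conjugating with the time-dependent transform $\mathcal{T}^{A}_{\lambda,t}$ gives
$$
\mathcal{T}^{A}_{\lambda,t}K(t,s)u(z^{t},\zeta^{t})=\int_{\mathbb{R}^{2d}}\langle g^{A,\lambda}_{z^{t},\zeta^{t}},(\mathcal{R}_{1}+\mathcal{R}_{2})g^{A,\lambda}_{x^{t},\xi^{t}}\rangle\, e^{i\psi(t)-i\psi(s)}\,\tilde u(s;x^{s},\xi^{s})\,dxd\xi .
$$
Using volume preservation (Lemma \ref{lem:flow}), this becomes an integral operator in flow-pulled-back coordinates. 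Its kernel is $k_{t}(z,\zeta,x,\xi):=\langle g^{A,\lambda}_{z^{t},\zeta^{t}},(\mathcal{R}_{1}+\mathcal{R}_{2})g^{A,\lambda}_{x^{t},\xi^{t}}\rangle$. The key claim is the bound
$$
|k_{t}(z,\zeta,x,\xi)|\lesssim a(t;x,\xi)\,\langle z^{t}-x^{t}\rangle^{-N}\langle\zeta^{t}-\xi^{t}\rangle^{-N'},
$$
where the weight $a$ satisfies $\sup_{(x,\xi)}\int_{I}a(t;x,\xi)\,dt\leq C(1+|I|)$ uniformly, with $C$ independent of the interval's location. Given this, the weighted estimate from Lemma \ref{lem:weighted kernel bound}, together with the Gronwall bounds \eqref{eq:Gronwall} and \eqref{eq:Gronwall backward}, handles $\nu_{\lambda,m}$ and the move from the points $z^{t},x^{t}$ back to $z,x$. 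Young's inequality in $(x,\xi)$ combined with Fubini in $t$ then gives (a). For (b) the same argument gives
$$
\|\mathcal{K}v\|\lesssim\sup_{(x,\xi)}\int_{0}^{T}a(t;x,\xi)\,dt\cdot\|v\| .
$$
If $a$ is merely bounded, this factor is $O(T)$, which is small. If $a$ is only time-integrable, I would instead split $[0,T]$ into pieces on which the integral of $a$ is small and iterate, as in the standard Volterra argument; this iteration is what uses the $L^{1}$-in-time structure of the space.

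The kernel bound for $k_{t}$ is proved by splitting into the two error terms. For $\mathcal{R}_{2}$, the symbol $r^{(1)}_{x,\xi}(h)+h_{r}$ has all derivatives bounded, since $h\in S^{0,(2)}$. Its size near $(x,\xi)$ is quadratic in $(y-x,\eta-\xi)$, and these quantities are absorbed by the Gaussian localization of the wavepacket. The computation used for $G^{A,\lambda}$ in Proposition \ref{prop:parametrix} then gives a bounded $a$: write the product with the factor $e^{i\Gamma^{B}}$, use \eqref{eq:flux bound}, and integrate by parts. For $\mathcal{R}_{1}$, the multiplier is $\partial_{\eta}h(x^{t},\xi^{t})\cdot(r^{(1)}_{x}A(x,y)-r^{(1)}_{x}A(y,x))$. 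By \eqref{def:magnetic remainder} and \eqref{eq:A_j^(2)}, this remainder is $|y-x|^{2}$ times averages of $\partial B$ along the segment $[x,y]$. The decay assumption \eqref{eq:B decay} bounds it by $|y-x|^{2}\langle x^{t}\rangle^{-1-\epsilon}$ up to polynomial factors in $|y-x|$, and these factors are again absorbed by the Gaussians via Peetre's inequality. The prefactor is $|\partial_{\eta}h|\lesssim 1+|x^{t}|+|\xi^{t}|$. After multiplying by $\langle x^{t}\rangle^{-1-\epsilon}$, the $|x^{t}|$ part is bounded, and what remains is
$$
a(t;x,\xi)\approx 1+\langle x^{t}\rangle^{-1-\epsilon}|\xi^{t}|.
$$
Lemma \ref{lem:t-averaging} (which needs Assumption \ref{asmp:h}) states exactly that this is integrable in time uniformly in the starting point.

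The main obstacle is this $\mathcal{R}_{1}$ term, because its weight is unbounded pointwise in $\xi^{t}$. Two points need care there. First, the order of integration: the supremum over $(x,\xi)$ must be taken after the $t$-integral, which the norm of Definition \ref{defi:t-dependent T} permits because it integrates $|\tilde u(t;x^{t},\xi^{t})|$ in $t$ along each flow line before taking $L^{p}$ in $(x,\xi)$. Second, the kernel's off-diagonal decay must let the weight sit on the input variable $(x^{t},\xi^{t})$ rather than the output. I would arrange this by placing $\langle x^{t}\rangle^{-1-\epsilon}$ on the input variable, controlling the output by Peetre's inequality with $\langle z^{t}-x^{t}\rangle$, and then using Minkowski's inequality to exchange the $t$- and $(x,\xi)$-integrals. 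Uniqueness follows because the same contraction estimate applied to the difference of two solutions forces that difference to vanish.
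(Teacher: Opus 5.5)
Your reduction of (a) and (b) to a phase space kernel bound for $K$, and your handling of the $\mathcal{R}_{2}$ term and the weights, are in line with the paper. There is a genuine gap in step (b), and it sits in the $\mathcal{R}_{1}$ term you single out.

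With unscaled wavepackets, the $\mathcal{R}_{1}$ kernel carries the weight $C(1+\langle x^{t}\rangle^{-1-\epsilon}|\xi^{t}|)$. The constant $C$ depends only on $h$ and $\sup|\partial B|$, so it is not small. Lemma \ref{lem:t-averaging} only gives $\int_{I}\langle x^{\tau}\rangle^{-1-\epsilon}|\xi^{\tau}|\,d\tau\le C(1+|I|)$, which does not tend to $0$ with $|I|$. This cannot be improved. Already for the free flow of $h=|\eta|^{2}$, take the trajectory $x^{\tau}=2(\tau-\tau_{0})\xi$, which passes through the origin at the midpoint $\tau_{0}$ of $I$. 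It gives
\[
\int_{I}\langle x^{\tau}\rangle^{-1-\epsilon}|\xi|\,d\tau=\tfrac12\int_{-|\xi||I|}^{|\xi||I|}(1+u^{2})^{-\frac{1+\epsilon}{2}}\,du\longrightarrow\tfrac12\int_{\mathbb{R}}(1+u^{2})^{-\frac{1+\epsilon}{2}}\,du\qquad(|\xi|\to\infty),
\]
a positive constant independent of $|I|$. So no partition of $[0,T]$, uniform in the starting point $(x,\xi)$, makes these integrals small. Your ``split and iterate'' fallback therefore only shows that $\mathcal{K}$ is bounded, with a norm bound that does not shrink as $T\to0$. The scalar Volterra $1/n!$ gain is also not available without further work. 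The kernel is not diagonal in phase space, so successive iterates evaluate the weight along different trajectories.

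The missing ingredient is the rescaling parameter $\lambda$ of the wavepackets $g^{A,\lambda}_{x,\xi}$, which your proposal never uses.
\begin{itemize}
\item The $\mathcal{R}_{1}$ multiplier is a second-order Taylor remainder, so it is $O(|y-x^{t}|^{2})$. The wavepacket localizes $|y-x^{t}|\lesssim\lambda^{-1}$. Hence Lemma \ref{lem:G bound} bounds this kernel by $\lambda^{-2}(1+\langle x^{t}\rangle^{-1-\epsilon}|\xi^{t}|)$ times the off-diagonal decay.
\item The $\mathcal{R}_{2}$ kernel costs $\lambda^{C_{N}}$, but its weight is bounded.
\item Combined with Lemma \ref{lem:t-averaging}, this yields the contraction constant $C_{d,h,B,\epsilon}(T\lambda^{C_{d}}+\lambda^{-2})$.
\end{itemize}
The order of choices matters. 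First take $\lambda$ large to beat the $O(1)$ time average coming from $\mathcal{R}_{1}$, then take $T$ small to beat $T\lambda^{C_{d}}$.

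With this change the rest of your outline goes through, with one caveat. For $1<p<\infty$, Minkowski's inequality alone does not give an $L^{p}\to L^{p}$ bound. You need both placements of the weight: on the input variable, and via Peetre on the output variable. The paper uses exactly these at $p=1$ and $p=\infty$ and then interpolates; a Schur test would do the same job.
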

	
	\begin{proof}
		 In view of Lemma \ref{lem:weighted kernel bound} and using arguments regarding the weights $\nu_{\lambda,m}$ similar to the ones given in the proof of Proposition \ref{prop:parametrix}, it suffices to prove  the unweighted version with $m=0$ as long as the relevant phase space kernel has sufficient off-diagonal decay.  In the following we show  the unweighted case with the spaces $M^{p}_{A}(\mathbb{R}^{d})$ and $\mathcal{M}^{p}_{A}(\mathbb{R}^{d},L^{1}([0,T])) := \mathcal{M}^{0,p}_{A}(\mathbb{R}^{d},L^{1}([0,T]))$, 
		   from which the   desired off-diagonal decay of the kernels  will become apparent.  The necessary modifications for the weighted case are outlined at the end of the proof.

		By the Banach fixed point theorem, to show the unique existence of  a  solution to (\ref{eq:volterra}), it suffices to show that the mapping $\Psi_{u_{0}}$ defined by
		\begin{equation}\label{eq:volterra solution map}
			\Psi_{u_{0}}(v)(y) := -K(t,0)u_{0}(y) -i\int_{0}^{t}K(t,s)v(s;y)\ ds,\quad v \in \mathcal{M}^{p}_{A}(\mathbb{R}^{d},L^{1}([0,T])),
		\end{equation}
		is a contraction mapping on $\mathcal{M}^{p}_{A}(\mathbb{R}^{d},L^{1}([0,T]))$.
		
		We first show that $K(t,0)u_{0}$ is in the space $\mathcal{M}^{p}_{A}(\mathbb{R}^{d},L^{1}([0,T]))$, by showing $K(t,0)$ is a bounded linear operator between $M^{p}_{A}(\mathbb{R}^{d})$ and $\mathcal{M}^{p}_{A}(\mathbb{R}^{d},L^{1}([0,T]))$, that is
		\begin{equation}\label{eq:K bound}
			\|K(\cdot,0)u_{0}\|_{\mathcal{M}^{p}_{A}(\mathbb{R}^{d},L^{1}([0,T]))} \leq C_{K}\|u_{0}\|_{M^{p}_{A}(\mathbb{R}^{d})},\quad  u_{0} \in M^{p}_{A}(\mathbb{R}^{d}).
		\end{equation}
		The above bound can be obtain by interpolating the following bounds (cf. \cite[Section 1.18.7, Theorem 1]{Triebel_Interpolation})
		\begin{equation}\label{eq:K bounds}
			\begin{gathered}
				\|K(\cdot,0)u_{0}\|_{\mathcal{M}^{1}_{A}(\mathbb{R}^{d},L^{1}([0,T]))} \leq C_{K}'\|u_{0}\|_{M^{1}_{A}(\mathbb{R}^{d})},\\
				\|K(\cdot,0)u_{0}\|_{\mathcal{M}^{\infty}_{A}(\mathbb{R}^{d},L^{1}([0,T]))} \leq C_{K}''\|u_{0}\|_{M^{\infty}_{A}(\mathbb{R}^{d})}.
			\end{gathered}
		\end{equation}
		
		We begin with the bound for $K(t,0): M^{1}_{A}(\mathbb{R}^{d}) \to \mathcal{M}^{1}_{A}(\mathbb{R}^{d},L^{1}([0,T]))$. By definition of $K$ in \eqref{def:K}, we have
		\begin{equation}\label{eq:K wavepacket transform}
			\begin{aligned}
				& \mathcal{T}^{A}_{\lambda,t}K(t,0)u_{0}(z^{t},\zeta^{t})\\
				& = \int_{\mathbb{R}^{d}} \overline{g^{A,\lambda}_{z^{t},\zeta^{t}}}(y) \left(\int_{\mathbb{R}^{2d}} (\mathcal{R}_{1}+\mathcal{R}_{2})g^{A,\lambda}_{x^{t},\xi^{t}}(y) e^{i\psi(t)}\tilde{u}_{0}(x,\xi)\ dxd\xi\right)\ dy\\
				& = \int_{\mathbb{R}^{2d}} \left(\int_{\mathbb{R}^{d}} \overline{g^{A,\lambda}_{z^{t},\zeta^{t}}}(y) (\mathcal{R}_{1}+\mathcal{R}_{2})g^{A,\lambda}_{x^{t},\xi^{t}}(y)\ dy\right) e^{i\psi(t)}\tilde{u}_{0}(x,\xi)\ dxd\xi.
			\end{aligned}
		\end{equation}
		Denote the integral kernel above as $G_{K}$, with
		\begin{equation}\label{eq:G}
			G_{K}(t;x,\xi,z,\zeta) := \int_{\mathbb{R}^{d}} \overline{g^{A,\lambda}_{z^{t},\zeta^{t}}}(y)(\mathcal{R}_{1}+\mathcal{R}_{2})g^{A,\lambda}_{x^{t},\xi^{t}}(y)\ dy.
		\end{equation}
		We claim that $G_{K}$ satisfies the bound
		\begin{equation}\label{eq:G bound}
			\begin{aligned}
				& |G_{K}(t;x,\xi,z,\zeta)|\\
				& \leq C_{N,h,B}(\lambda^{C_{N}}+\lambda^{-2}\langle x^{t}\rangle^{-1-\epsilon}|\xi^{t}|) \langle \lambda(z^{t}-x^{t})\rangle^{-N}\langle \lambda^{-1}(\zeta^{t}-\xi^{t})\rangle^{-N},\quad  \forall N \in \mathbb{N},
			\end{aligned}
		\end{equation}
		which we will prove in the next section as part of Lemma \ref{lem:G bound}. From this, we obtain 
		$$
		\begin{aligned}
			& \|K(t,0)u_{0}\|_{\mathcal{M}^{1}_{A}(\mathbb{R}^{d},L^{1}([0,T]))}\\
			& = \int_{\mathbb{R}^{2d}} \int_{[0,T]}\left|\mathcal{T}^{A}_{\lambda,t}K(t,0)u_{0}(z^{t},\zeta^{t})\right|dt\ dzd\zeta\\
			& \lesssim \int_{\mathbb{R}^{2d}} \int_{[0,T]} \bigg[\left(\int_{\mathbb{R}^{2d}} \langle \lambda(z^{t}-x^{t})\rangle^{-d-1}\langle \lambda^{-1}(\zeta^{t}-\xi^{t})\rangle^{-d-1}\ dz^{t}d\zeta^{t}\right)\times\\
			& \quad\quad \times C_{d,h,B}(\lambda^{C_{d}}+\lambda^{-2}\langle x^{t}\rangle^{-1-\epsilon}|\xi^{t}|) |\tilde{u}_{0}(x,\xi)|\bigg]\ dtdxd\xi\\
			& \lesssim \int_{\mathbb{R}^{2d}} \left(\int_{[0,T]} C_{d,h,B}(\lambda^{C_{d}}+\lambda^{-2}\langle x^{t}\rangle^{-1-\epsilon}|\xi^{t}|)\ dt\right) |\tilde{u}_{0}(x,\xi)|\ dxd\xi\\
			&  \lesssim C_{d,h,B}(T\lambda^{C_{d}}+\lambda^{-2}C_{\epsilon}(1+T)) \int_{\mathbb{R}^{2d}} |\tilde{u}_{0}(x,\xi)|\ dxd\xi\\
			& \lesssim C_{d,h,B,\epsilon}(T\lambda^{C_{d}}+\lambda^{-2}) \|u_{0}\|_{M^{1}_{A}(\mathbb{R}^{d})},
		\end{aligned}
		$$
		where for the $t$-integral we used Lemma \ref{lem:t-averaging}.
		
		For the bound $K(t,0): M^{\infty}_{A}(\mathbb{R}^{d}) \to \mathcal{M}^{\infty}_{A}(\mathbb{R}^{d},L^{1}([0,T]))$, we proceed similarly as above, except that we use the other bound for $G_{K}$ from Lemma \ref{lem:G bound}. That is,
		\begin{equation}\label{eq:G bound alternative}
			\begin{aligned}
				& |G_{K}(t;x,\xi,z,\zeta)|\\
				& \lesssim C_{N,h,B}(\lambda^{C_{N}}+\lambda^{-2}\langle z^{t}\rangle^{-1-\epsilon}|\zeta^{t}|) \langle \lambda(z^{t}-x^{t})\rangle^{-N}\langle \lambda^{-1}(\zeta^{t}-\xi^{t})\rangle^{-N},\quad  \forall N \in \mathbb{N}.
			\end{aligned}
		\end{equation}
		With the above bound and Lemma \ref{lem:flow}, we obtain
		$$
		\begin{aligned}
			& \|K(t,0)u_{0}\|_{\mathcal{M}^{\infty}_{A}(\mathbb{R}^{d},L^{1}([0,T]))}\\
			& =  \underset{(z,\zeta) \in \mathbb{R}^{2d}}{\operatorname{ess\ sup}} \int_{[0,T]}\left|\mathcal{T}^{A}_{\lambda,t}K(t,0)u_{0}(z^{t},\zeta^{t})\right|dt\\
			& \lesssim \underset{(z,\zeta) \in \mathbb{R}^{2d}}{\operatorname{ess\ sup}}\  \int_{[0,T]} \bigg[C_{d,h,B}(\lambda^{C_{d}}+\lambda^{-2}\langle z^{t}\rangle^{-1-\epsilon}|\zeta^{t}|)\times\\
			&\quad\quad \left.\times \int_{\mathbb{R}^{2d}} \langle \lambda(z^{t}-x^{t})\rangle^{-d-1}\langle \lambda^{-1}(\zeta^{t}-\xi^{t})\rangle^{-d-1} |\tilde{u}_{0}(x,\xi)|\ dxd\xi\right] dt\\
			& \lesssim \underset{(z,\zeta) \in \mathbb{R}^{2d}}{\operatorname{ess\ sup}}\ \int_{[0,T]} C_{d,h,B}(\lambda^{C_{d}}+\lambda^{-2}\langle z^{t}\rangle^{-1-\epsilon}|\zeta^{t}|)\ dt \times\\
			&\quad\quad \times \underset{t \in [0,T]}{\operatorname{ess\ sup}} \left( \underset{(x,\xi) \in \mathbb{R}^{2d}}{\operatorname{ess\ sup}}|\tilde{u}_{0}(x,\xi)|\int_{\mathbb{R}^{2d}} \langle \lambda(z^{t}-x^{t})\rangle^{-d-1}\langle \lambda^{-1}(\zeta^{t}-\xi^{t})\rangle^{-d-1}\ dx^{t}d\xi^{t}\right).
		\end{aligned}
		$$
		We then again use Lemma \ref{lem:t-averaging} for the $t$-integral and notice  that 
		$$
		\int_{\mathbb{R}^{2d}} \langle \lambda(z^{t}-x^{t})\rangle^{-d-1}\langle \lambda^{-1}(\zeta^{t}-\xi^{t})\rangle^{-d-1}\ dx^{t}d\xi^{t} \leq C_{d}
		$$
		uniformly in $t$,  which yields 
		$$
		\|K(t,0)u_{0}\|_{\mathcal{M}^{\infty}_{A}(\mathbb{R}^{d},L^{1}([0,T]))} \lesssim C_{d,h,B,\epsilon}(T\lambda^{C_{d}}+\lambda^{-2})\|u\|_{M^{\infty}_{A}(\mathbb{R}^{d})}.
		$$
		
		With $K(t,0)u_{0} \in \mathcal{M}^{p}_{A}(\mathbb{R}^{d},L^{1}([0,T]))$, to show $\Psi_{u_{0}}$ defined in (\ref{eq:volterra solution map}) is a contraction mapping on $\mathcal{M}^{p}_{A}(\mathbb{R}^{d},L^{1}([0,T]))$, it suffices to show the map $F: [0,T]\times \mathbb{R}^{d} \to [0,T]\times \mathbb{R}^{d}$ defined as
		$$
			F(v)(t;y) := -\int_{0}^{t} K(t,s)v(s;y)\ ds
		$$
		is a contraction mapping on $\mathcal{M}^{p}_{A}(\mathbb{R}^{d},L^{1}([0,T]))$. Similar to the bound (\ref{eq:K bound}) of $K(t,0)$, the contraction bound
		$$
			\|F(v)\|_{\mathcal{M}^{p}_{A}(\mathbb{R}^{d},L^{1}([0,T]))} \leq C_{F}\|v\|_{\mathcal{M}^{p}_{A}(\mathbb{R}^{d},L^{1}([0,T]))},\quad C_{F} \in (0,1)
		$$
		can be obtain by interpolating the bounds
		\begin{equation}\label{eq:F bounds}
			\begin{gathered}
				\|F(v)\|_{\mathcal{M}^{1}_{A}(\mathbb{R}^{d},L^{1}([0,T]))} \leq C_{F}'\|v\|_{\mathcal{M}^{1}_{A}(\mathbb{R}^{d},L^{1}([0,T]))},\quad C_{F}' \in (0,1),\\
				\|F(v)\|_{\mathcal{M}^{\infty}_{A}(\mathbb{R}^{d},L^{1}([0,T]))} \leq C_{F}''\|v\|_{\mathcal{M}^{\infty}_{A}(\mathbb{R}^{d},L^{1}([0,T]))},\quad C_{F}'' \in (0,1).
			\end{gathered}
		\end{equation}
		Notice that for the relevant wavepacket transform $\mathcal{T}^{A}_{\lambda,t}F(v)(t;z^{t},\zeta^{t})$, we can re-write again with the integral kernel $G_{K}$ defined in (\ref{eq:G})
		$$
			\begin{aligned}
				\mathcal{T}^{A}_{\lambda,t}F(v)(t;z^{t},\zeta^{t}) & = \int_{\mathbb{R}^{d}} \overline{g^{A,\lambda}_{z^{t},\zeta^{t}}}(y)\int_{0}^{t}\left(\int_{\mathbb{R}^{2d}} (\mathcal{R}_{1}+\mathcal{R}_{2})g^{A,\lambda}_{x^{t},\xi^{t}}(y)\left(e^{i\psi(t)-i\psi(s)}\tilde{v}(s;x^{s},\xi^{s})\right)\ dxd\xi\right)\ dsdy\\
				& = \int_{\mathbb{R}^{2d}}\left(\int_{\mathbb{R}^{d}} \overline{g^{A,\lambda}_{z^{t},\zeta^{t}}}(y)(\mathcal{R}_{1}+\mathcal{R}_{2})g^{A,\lambda}_{x^{t},\xi^{t}}(y)\ dy\right) \left(\int_{0}^{t} e^{i\psi(t)-i\psi(s)}\tilde{v}(s;x^{s},\xi^{s})\ ds\right)\ dx d\xi\\
				& = \int_{\mathbb{R}^{2d}} G_{K}(t;x,\xi,z,\zeta) \left(\int_{0}^{t} e^{i\psi(t)-i\psi(s)}\tilde{v}(s;x^{s},\xi^{s})\ ds\right)\ dx d\xi.
			\end{aligned}
		$$
		The above representation is quite similar to (\ref{eq:K wavepacket transform}), where the function $e^{i\psi(t)}\tilde{u}_{0}(x,\xi)$ is replaced by the slightly more complicated $\int_{0}^{t} e^{i\psi(t)-i\psi(s)}\tilde{v}(s;x^{s},\xi^{s})\ ds$. Therefore, the proof of the  bounds (\ref{eq:K bounds})  for $K$  can  easily be modified to prove the  bounds (\ref{eq:F bounds}) for $F$, which again relies on the kernel bounds for $G_{K}$ in Lemma \ref{lem:G bound}.

		For the contraction bound on $\mathcal{M}^{1}_{A}(\mathbb{R}^{d},L^{1}([0,T]))$, we see that
		$$
		\begin{aligned}
			& \|F(v)\|_{\mathcal{M}^{1}_{A}(\mathbb{R}^{d},L^{1}([0,T]))}\\
			&\quad \leq \int_{\mathbb{R}^{2d}} \int_{[0,T]} \int_{\mathbb{R}^{2d}} |G_{K}(t;x,\xi,z,\zeta)| \left(\int_{0}^{t}  |\tilde{v}(s;x^{s},\xi^{s})| ds\right) dx d\xi\ dt\ dzd\zeta\\
			&\quad \lesssim \int_{\mathbb{R}^{2d}} \int_{[0,T]} \bigg[\left(\int_{\mathbb{R}^{2d}} \langle \lambda(z^{t}-x^{t})\rangle^{-d-1}\langle \lambda^{-1}(\zeta^{t}-\xi^{t})\rangle^{-d-1}\ dz^{t}d\zeta^{t}\right)\times\\
			&\quad \quad\quad \times C_{d,h,B}(\lambda^{C_{d}}+\lambda^{-2}\langle x^{t}\rangle^{-1-\epsilon}|\xi^{t}|) \left(\int_{0}^{t}  |\tilde{v}(s;x^{s},\xi^{s})| ds\right)\bigg]\ dtdxd\xi\\
			&\quad \leq \int_{\mathbb{R}^{2d}} \left(\int_{[0,T]} C_{d,h,B}(\lambda^{C_{d}}+\lambda^{-2}\langle x^{t}\rangle^{-1-\epsilon}|\xi^{t}|)\ dt\right) \underset{t \in [0,T]}{\operatorname{ess\ sup}} \left(\int_{0}^{t}  |\tilde{v}(s;x^{s},\xi^{s})| ds\right)\ dxd\xi\\
			&\quad \lesssim C_{d,h,B}(T\lambda^{C_{d}}+\lambda^{-2}C_{\epsilon}(1+T)) \int_{\mathbb{R}^{2d}} \left(\int_{0}^{T}  |\tilde{v}(s;x^{s},\xi^{s})| ds\right)\ dxd\xi\\
			&\quad \leq C_{d,h,B,\epsilon}(T\lambda^{C_{d}}+\lambda^{-2}) \|v\|_{\mathcal{M}^{1}_{A}(\mathbb{R}^{d},L^{1}([0,T]))}.
		\end{aligned}
		$$
		For the contraction bound on  $\mathcal{M}^{\infty}_{A}(\mathbb{R}^{d},L^{1}([0,T]))$, we use
		$$
		\begin{aligned}
			& \|F(v)\|_{\mathcal{M}^{\infty}_{A}(\mathbb{R}^{d},L^{1}([0,T]))}\\
			&\quad \leq \esssup_{(z,\zeta) \in \mathbb{R}^{2d}}  \int_{[0,T]} \int_{\mathbb{R}^{2d}} |G_{K}(t;x,\xi,z,\zeta)| \left(\int_{0}^{t}  |\tilde{v}(s;x^{s},\xi^{s})| ds\right) dx d\xi dt\\
			&\quad \lesssim \underset{(z,\zeta) \in \mathbb{R}^{2d}}{\operatorname{ess\ sup}} \bigg[\int_{[0,T]} C_{d,h,B}(\lambda^{C_{d}}+\lambda^{-2}\langle z^{t}\rangle^{-1-\epsilon}|\zeta^{t}|)\ dt \times\\
			&\quad \quad\quad \times \underset{t \in [0,T]}{\operatorname{ess\ sup}} \int_{\mathbb{R}^{2d}} \langle \lambda(z^{t}-x^{t})\rangle^{-d-1}\langle \lambda^{-1}(\zeta^{t}-\xi^{t})\rangle^{-d-1} \left(\int_{0}^{t} |\tilde{v}(s;x^{s},\xi^{s})|\ ds\right) dxd\xi \bigg] \\
			&\quad \leq  C_{d,h,B}(T\lambda^{C_{d}}+\lambda^{-2}C_{\epsilon})\  \underset{(z,\zeta) \in \mathbb{R}^{2d}}{\operatorname{ess\ sup}} \; \underset{t \in [0,T]}{\operatorname{ess\ sup}} \left(\int_{\mathbb{R}^{2d}} \langle \lambda(z^{t}-x^{t})\rangle^{-d-1}\langle \lambda^{-1}(\zeta^{t}-\xi^{t})\rangle^{-d-1}\ dxd\xi \times\right.\\
			&\quad \quad\quad \times \left. \underset{(x,\xi) \in \mathbb{R}^{2d}}{\operatorname{ess\ sup}} \left(\int_{0}^{t} |\tilde{v}(s;x^{s},\xi^{s})|\ ds\right) \right) \\
			&\quad \leq C_{d,h,B}(T\lambda^{C_{d}}+\lambda^{-2}C_{\epsilon}(1+T))\  \underset{(x,\xi) \in \mathbb{R}^{2d}}{\operatorname{ess\ sup}}\ \;  \underset{t \in [0,T]}{\operatorname{ess\ sup}} \int_{0}^{t} |\tilde{v}(s;x^{s},\xi^{s})|\ ds\\
			&\quad \leq C_{d,h,B,\epsilon}(T\lambda^{C_{d}}+\lambda^{-2}) \|v\|_{\mathcal{M}^{\infty}_{A}(\mathbb{R}^{d},L^{1}([0,T]))}.
		\end{aligned}
		$$
		For given constants $C_{d,h,B}, C_{d}, C_{\epsilon}$, we can choose sufficiently large $\lambda$ and small $T$ such that
		$$
		C^{*} := C_{d,h,B,\epsilon}(T\lambda^{C_{d}}+\lambda^{-2}) < 1,
		$$
		then by complex interpolation, for $1\leq p\leq \infty$ the operator $F$ is indeed a contraction on the Banach space $\mathcal{M}^{p}_{A}(\mathbb{R}^{d},L^{1}([0,T]))$, and we have the existence and uniqueness of $v$ as the solution to (\ref{eq:volterra}) by the Banach fixed point theorem.
		
		Moreover, using  equation (\ref{eq:volterra}) we have for the solution $v$
		$$
		\begin{aligned}
			\|v\|_{\mathcal{M}^{p}_{A}(\mathbb{R}^{d},L^{1}([0,T]))} & \leq \|K(t,0)u_{0}\|_{\mathcal{M}^{p}_{A}(\mathbb{R}^{d},L^{1}([0,T]))} + \|F(v)\|_{\mathcal{M}^{p}_{A}(\mathbb{R}^{d},L^{1}([0,T]))}\\
			& \leq C_{K}\|u_{0}\|_{M^{p}_{A}(\mathbb{R}^{d})} + C_{F}\|v\|_{\mathcal{M}^{p}_{A}(\mathbb{R}^{d},L^{1}([0,T]))}.
		\end{aligned}
		$$
		Since $C_{F} \in (0,1)$, we obtain
		$$
		\|v\|_{\mathcal{M}^{p}_{A}(\mathbb{R}^{d},L^{1}([0,T]))} \leq C_{K}(1-C_{F})^{-1}\|u_{0}\|_{M^{p}_{A}(\mathbb{R}^{d})}.
		$$
		
		 To derive the weighted version, it suffices to notice that for the weight $\nu_{\lambda,m}$ we can use the flow property \eqref{eq:Gronwall} and \eqref{eq:Gronwall backward} to obtain
		$$
		\begin{aligned}
			\nu_{\lambda,m}(z,\zeta) & \lesssim C_{T,h,B}^{m} \nu_{\lambda,m}(z^{t},\zeta^{t})\nu_{\lambda,-m}(x,\xi)\nu_{\lambda,m}(x,\xi)\\
			& \lesssim \langle \lambda^{-1}(z^{t}-x^{t})\rangle^{m} \langle \lambda^{-1}(\zeta^{t}-\xi^{t})\rangle^{m} \nu_{\lambda,m}(x^{t},\xi^{t})\nu_{\lambda,-m}(x,\xi)\nu_{\lambda,m}(x,\xi)\\
			& \lesssim C_{T,h,B}^{2m} \langle \lambda^{-1}(z^{t}-x^{t})\rangle^{m} \langle \lambda^{-1}(\zeta^{t}-\xi^{t})\rangle^{m} \nu_{\lambda,m}(x,\xi).
		\end{aligned}
		$$
		Then we can modify the arguments for the unweighted version in the obvious way with the above bound, and combine them with the sufficient off-diagonal decay of $G_{K}$ as in \eqref{eq:G bound} and \eqref{eq:G bound alternative} to obtain the desired weighted version involving $M^{m,p}_{A}(\mathbb{R}^{d})$ and $\mathcal{M}^{m,p}_{A}(\mathbb{R}^{d},L^{1}([0,T]))$.
	\end{proof}
	
	We are now ready to prove Theorem \ref{thm:main}.
	
	\begin{proof}[Proof of Theorem \ref{thm:main}]
		 We define the propagator $S$ for $u_{0} \in \mathcal{S}(\mathbb{R}^{d})$ by
		$$
			S(t)u_{0} := \tilde{S}(t,0)u_{0} + i\int_{0}^{t}\tilde{S}(t,s)v(s)\ ds, \quad 0\leq t\leq T,
		$$
		where $v$ denotes the unique solution  to the Volterra equation \eqref{eq:volterra}  in Lemma \ref{lem:volterra} and  $\tilde S$ is defined in \eqref{def:S tilde}. Then  there holds   $S(0) = \operatorname{Id}$, and $u(t) = S(t)u_{0}$ solves the equation (\ref{eq:electro-magnetic weyl schrodinger}),  since by   direct computation we have  
		$$
		(D_{t}+\operatorname{Op}^{A}(h))S(t)u_{0}(y) = 0.
		$$
		
		Next we show for $1 \leq p \leq \infty, m \geq 0$ and $t \in [0,T]$, the propagator $S(t)$ is bounded on  $M^{m,p}_{A}(\mathbb{R}^{d})$. Fix $m,p$ and $t$.  By  definition of  $S(t)$, we have
		\begin{equation}\label{eq:S bound}
			\|S(t)u_{0}\|_{M^{m,p}_{A}(\mathbb{R}^{d})} \leq \|\tilde{S}(t,0)u_{0}\|_{M^{m,p}_{A}(\mathbb{R}^{d})} + \left\|i\int_{0}^{t}\tilde{S}(t,s)v(s,\cdot)\ ds\right\|_{M^{m,p}_{A}(\mathbb{R}^{d})}.
		\end{equation}
		For the terms on the right hand side above, we have by Proposition \ref{prop:parametrix} 
		\begin{equation}\label{eq:parametrix bound}
			\|\tilde{S}(t,s)u_{0}\|_{M^{m,p}_{A}(\mathbb{R}^{d})} \lesssim C_{T,h,B,m} \|u_{0}\|_{M^{m,p}_{A}(\mathbb{R}^{d})},
		\end{equation}
		so it remains to check the boundedness of the term involving $v$. Here, we have
		$$
		\left\|i\int_{0}^{t}\tilde{S}(t,s)v(s;\cdot)\ ds\right\|^{p}_{M^{m,p}_{A}(\mathbb{R}^{d})} = \int_{\mathbb{R}^{2d}} \left| \nu_{\lambda,m}(z,\zeta)\int_{\mathbb{R}^{d}} \overline{g^{A,\lambda}_{z,\zeta}}(y) \left(\int_{0}^{t}\tilde{S}(t,s)v(s;y)\ ds\right)\ dy \right|^{p} dz d\zeta,
		$$
		which depends on the $L^{p}$-integrability of the phase space function $\tilde{v}'(t;z,\zeta)$, defined by
		$$
		\tilde{v}'(t;z,\zeta) := \nu_{\lambda,m}(z,\zeta)\int_{\mathbb{R}^{d}} \overline{g^{A,\lambda}_{z,\zeta}}(y) \left(\int_{0}^{t}\tilde{S}(t,s)v(s;y)\ ds\right )\ dy.
		$$
		Writing out the action of $\tilde{S}$
		$$
		\int_{0}^{t}\tilde{S}(t,s)v(s;y)\ ds = \int_{0}^{t}\left(\int_{\mathbb{R}^{2d}}e^{i\psi(t)-i\psi(s)}g^{A,\lambda}_{x^{t},\xi^{t}}(y)\tilde{v}(s;x^{s},\xi^{s})\ dx d\xi\right)\ ds,
		$$
		we obtain an expression of $\tilde{v}'(t;z,\zeta)$ with the integral kernel $G^{A,\lambda}$ as defined in (\ref{def:G^A,lambda})
		$$
		\begin{aligned}
			\tilde{v}'(t;z,\zeta)
			& = \int_{\mathbb{R}^{2d}}\nu_{\lambda,m}(z,\zeta)\left( \int_{\mathbb{R}^{d}} \overline{g^{A,\lambda}_{z,\zeta}}(y)g^{A,\lambda}_{x^{t},\xi^{t}}(y)\ dy \right)\left(\int_{0}^{t} e^{i\psi(t)-i\psi(s)}\tilde{v}(s;x^{s},\xi^{s})\ ds\right)\ dx d\xi\\
			& = \int_{\mathbb{R}^{2d}} \nu_{\lambda,m}(z,\zeta)G^{A,\lambda}(z,\zeta,x^{t},\xi^{t})\nu_{\lambda,-m}(x,\xi) \left(\nu_{\lambda,m}(x,\xi)\int_{0}^{t} e^{i\psi(t)-i\psi(s)}\tilde{v}(s;x^{s},\xi^{s})\ ds\right)\ dx d\xi.
		\end{aligned}
		$$
		Since we have proved that $G^{A,\lambda}$ has off-diagonal decays as in (\ref{eq:G^A,lambda bound}), together with
		$$
		\nu_{\lambda,m}(z,\zeta)\nu_{\lambda,-m}(x,\xi) \lesssim C_{T,h,B}^{m}\langle\lambda^{-1}(z-x^{t}) \rangle^{m}\langle \lambda^{-1}(\zeta-\xi^{t})\rangle^{m},
		$$
		we can apply Young's inequality for integral operators to obtain
 	$$
		\left\|i\int_{0}^{t}\tilde{S}(t,s)v(s;\cdot)\ ds\right\|_{M^{m,p}_{A}(\mathbb{R}^{d})} = \|\tilde{v}'(t)\|_{L^{p}(\mathbb{R}^{2d})} \lesssim C_{T,h,B}^{m} \left\|\nu_{\lambda,m}(x,\xi)\int_{0}^{t} e^{i\psi(t)-i\psi(s)}\tilde{v}(s;x^{s},\xi^{s})\ ds\right\|_{L^{p}(\mathbb{R}^{2d})}.
		$$
		The right hand side above is in fact uniformly bound for $t \in [0,T]$ by
		$$
			\begin{aligned}
				& \left(\int_{\mathbb{R}^{2d}} \left|\nu_{\lambda,m}(x,\xi)\int_{0}^{t} e^{i\psi(t)-i\psi(s)}\tilde{v}(s;x^{s},\xi^{s})\ ds\right|^{p} dxd\xi\right)^{1/p}\\
				\leq &\  \left(\int_{\mathbb{R}^{2d}} \left|\nu_{\lambda,m}(x,\xi)\int_{0}^{T} |\tilde{v}(s;x^{s},\xi^{s})|\ ds\right|^{p} dxd\xi\right)^{1/p}\\
				= &\  \|v\|_{\mathcal{M}^{m,p}_{A}(\mathbb{R}^{d},L^{1}([0,T]))}.
			\end{aligned}
		$$
		Together with (\ref{eq:v bound}) in Lemma \ref{lem:volterra}, we now have
		\begin{equation}\label{eq:v' tilde bound}
			\left\|i\int_{0}^{t}\tilde{S}(t,s)v(s;\cdot)\ ds\right\|^{p}_{M^{m,p}_{A}(\mathbb{R}^{d})} \lesssim C_{T,h,B}^{m} \|v\|_{\mathcal{M}^{m,p}_{A}(\mathbb{R}^{d},L^{1}([0,T]))} \lesssim C_{T,h,B,m}\|u_{0}\|_{M^{m,p}_{A}(\mathbb{R}^{d})}.
		\end{equation}
		Plugging (\ref{eq:parametrix bound}) and (\ref{eq:v' tilde bound}) into (\ref{eq:S bound}), we obtain the desired bound for $S(t)$
		$$
			\|S(t)u_{0}\|_{M^{m,p}_{A}(\mathbb{R}^{d})} \lesssim C_{T,h,B,m}\|u_{0}\|_{M^{m,p}_{A}(\mathbb{R}^{d})}.
		$$
		This estimate implies the uniqueness of the solution $u(t) = S(t)u_{0}$, and also rules out the possibility of finite-time blow-up, so we can extend the solution globally in $t$.
		
		In the case of $u_{0} \in M^{2,p}_{A}(\mathbb{R}^{d})$, the above bound implies $u(t) = S(t)u_{0} \in M^{2,p}_{A}(\mathbb{R}^{d}) \subset M^{p}_{A}(\mathbb{R}^{d})$ for all $t \in [0,T]$. To see $u \in W^{1,1}([0,T],M^{p}_{A}(\mathbb{R}^{d}))$, it suffice to note
		$$
		\|D_{t}u(t)\|_{M^{p}_{A}(\mathbb{R}^{d})} = \|\operatorname{Op}^{A}(h)u(t)\|_{M^{p}_{A}(\mathbb{R}^{d})} \lesssim C_{h} \|u(t)\|_{M^{2,p}_{A}(\mathbb{R}^{d})} \lesssim C_{T,h,B}  \|u_{0}\|_{M^{2,p}_{A}(\mathbb{R}^{d})},
		$$
		where the first inequality follows from the boundedness of $\operatorname{Op}^{A}(h)$ given by Lemma \ref{lem:Op^A boundedness}.
	\end{proof}

	\subsection{Integral kernel estimates}
	
	In this subsection we prove the bounds (\ref{eq:G bound}) and (\ref{eq:G bound alternative}) for the integral kernel $G_{K}$.
	
	\begin{lem}\label{lem:G bound}
		The kernel $G_{K}$ defined in (\ref{eq:G})
		$$
		G_{K}(t;x,\xi,z,\zeta) := \int_{\mathbb{R}^{d}} \overline{g^{A,\lambda}_{z^{t},\zeta^{t}}}(y) (\mathcal{R}_{1}+\mathcal{R}_{2})g^{A,\lambda}_{x^{t},\xi^{t}}(y)\ dy,\quad  t \in \mathbb{R}, (x,\xi,z,\zeta) \in \mathbb{R}^{4d},
		$$
		can be bounded by, for all $N \in \mathbb{N}$,
		$$
		\begin{aligned}
			& |G_{K}(t;x,\xi,z,\zeta)|\\
			& \quad \lesssim C_{N,h,B}(\lambda^{C_{N}}+\lambda^{-2}\langle x^{t}\rangle^{-1-\epsilon}|\xi^{t}|) \langle \lambda(z^{t}-x^{t})\rangle^{-N}\langle \lambda^{-1}(\zeta^{t}-\xi^{t})\rangle^{-N},
		\end{aligned}
		$$
		or alternatively
		$$
		\begin{aligned}
			& |G_{K}(t;x,\xi,z,\zeta)|\\
			& \quad \lesssim  C'_{N,h,B}(\lambda^{C'_{N}}+\lambda^{-2}\langle z^{t}\rangle^{-1-\epsilon}|\zeta^{t}|) \langle \lambda(z^{t}-x^{t})\rangle^{-N}\langle \lambda^{-1}(\zeta^{t}-\xi^{t})\rangle^{-N}.
		\end{aligned}
		$$
	\end{lem}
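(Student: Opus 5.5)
We split $G_K=G_{\mathcal R_1}+G_{\mathcal R_2}$ and bound each piece using the same mechanism as in the proof of \eqref{eq:G^A,lambda bound}. We write $(x,\xi)$ for $(x^{t},\xi^{t})$ and $(z,\zeta)$ for $(z^{t},\zeta^{t})$, since the flow plays no role in the kernel estimate. As before, multiplying by the phase $e^{-i\varphi^{A}(z,x)}e^{-i\zeta\cdot(z-x)}$ turns the pairing into an oscillatory integral
\[
\lambda^{d}\int e^{i(\xi-\zeta)\cdot(y-x)}\,a(y)\,W_{B,\lambda}(y,x,z)\,dy .
\]
Here $a$ is the extra amplitude produced by the remainder operator. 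Integrating by parts in $\lambda y$ gives the factor $\langle\lambda^{-1}(\zeta-\xi)\rangle^{-N}$. The Gaussian factors in \eqref{eq:W derivatives} give $\langle\lambda(z-x)\rangle^{-N}$, provided $a$ and its $\lambda y$-derivatives grow at most polynomially in $\lambda(y-x)$. What remains is to track the size of $a$.

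\emph{The term $\mathcal R_1$.} Here $a(y)=\partial_\eta h(x,\xi)\cdot\bigl(-r^{(1)}_x(A(y,x))+r^{(1)}_x(A(x,y))\bigr)$. By \eqref{def:magnetic remainder} and \eqref{eq:A_j^(2)}, the second derivatives of $A_j(\cdot,x)$ involve only $\partial B$. The decay assumption \eqref{eq:B decay} then gives
\[
|r^{(1)}_x(A(y,x))|\lesssim |y-x|^{2}\sup_{w\in[x,y]}\langle w\rangle^{-1-\epsilon}
\lesssim |y-x|^{2}\langle y-x\rangle^{1+\epsilon}\langle x\rangle^{-1-\epsilon},
\]
using Peetre's inequality. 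The same bound holds for $r^{(1)}_x(A(x,y))$, after Taylor expanding in the first slot. Since $|y-x|^{2}=\lambda^{-2}|\lambda(y-x)|^{2}$, the Gaussian absorbs the polynomial factors and we obtain the factor $\lambda^{-2}\langle x\rangle^{-1-\epsilon}$. For $\partial_\eta h(x,\xi)$, Assumption \ref{asmp:h} gives $|\partial_\eta h|\lesssim 1+|x|+|\xi|$. The $|x|$ part combines with $\langle x\rangle^{-1-\epsilon}$ to give a bounded contribution. The $|\xi|$ part yields exactly $\lambda^{-2}\langle x\rangle^{-1-\epsilon}|\xi|$, and the constant part is dominated by $\lambda^{C_N}$. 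Derivatives in $\lambda y$ of $a$ bring further $\lambda^{-1}$ factors and $\partial^{\alpha}B$, which are bounded, so the integration by parts closes. For the alternative bound, we first write $x=z+(x-z)$ and $\xi=\zeta+(\xi-\zeta)$. Then $\langle x\rangle^{-1-\epsilon}\lesssim\langle z\rangle^{-1-\epsilon}\langle z-x\rangle^{1+\epsilon}$ and $|\xi|\le|\zeta|+\lambda\langle\lambda^{-1}(\xi-\zeta)\rangle$. The resulting losses are polynomial in the off-diagonal quantities and hence absorbed by taking $N$ larger. The leftover factor $\lambda$ is absorbed into $\lambda^{C'_N}$, because the term $\lambda^{-2}\cdot\lambda\langle x\rangle^{-1-\epsilon}$ is bounded.

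\emph{The term $\mathcal R_2$.} This is $\operatorname{Op}^A_{KN}(r^{(1)}_{x,\xi}(h)+h_r)$ acting on a wavepacket. The symbol $h_r\in S^{0,(0)}$, and by \eqref{def:r1 remainder} the symbol $r^{(1)}_{x,\xi}(h)$ is a quadratic form in $(y-x,\eta-\xi)$ with coefficients given by bounded second derivatives of $h$; all its derivatives of order $\ge2$ are bounded. We write out $\operatorname{Op}^A_{KN}(b)g^{A,\lambda}_{x,\xi}(y)$ as an oscillatory integral in $(y',\eta)$. Combining the phases $e^{-i\varphi^A(y',y)}e^{i\varphi^A(y',x)}$ into $e^{i\Gamma^B}$ times $e^{-i\varphi^A(y,x)}$-type factors reduces this to the non-magnetic computation with an extra smooth factor of polynomially bounded derivatives, by \eqref{eq:flux bound}. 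Integrating by parts in $y'$ and $\eta$ should show that $\operatorname{Op}^A_{KN}(b)g^{A,\lambda}_{x,\xi}=e^{i\varphi^A(y,x)}e^{i\xi\cdot(y-x)}\lambda^{d/2}\Phi(\lambda(y-x))$. Here $\Phi$ is a Schwartz function whose seminorms are $\lesssim\lambda^{C}$, coming from $|\eta-\xi|^{2}\sim\lambda^{2}$ and $|y-x|^{2}\sim\lambda^{-2}$. Pairing this with $g^{A,\lambda}_{z,\zeta}$ exactly as in the proof of \eqref{eq:G^A,lambda bound} gives $\lambda^{C_N}\langle\lambda(z-x)\rangle^{-N}\langle\lambda^{-1}(\zeta-\xi)\rangle^{-N}$.

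The main obstacle is this last step: proving that the magnetic Kohn--Nirenberg operator with an unbounded, quadratically growing symbol maps a magnetic wavepacket to a wavepacket-like function with only polynomial-in-$\lambda$ losses and full Schwartz decay. The difficulty is controlling the magnetic phase $\varphi^A(y',y)$, which grows linearly. I would handle it via the flux identity \eqref{def:magnetic flux} and the bounds \eqref{eq:flux bound}, reducing to the standard $A=0$ wavepacket calculus.
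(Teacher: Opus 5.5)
Your proposal is correct and follows essentially the same route as the paper. It uses the same splitting into $G_{\mathcal R_1}$ and $G_{\mathcal R_2}$, the gain of $\lambda^{-2}\langle x^t\rangle^{-1-\epsilon}$ from the quadratic vanishing of $r^{(1)}_x(A)$ combined with the decay of $\partial B$ and the growth $|\partial_\eta h|\lesssim 1+|x^t|+|\xi^t|$, and the reduction of $\operatorname{Op}^A_{KN}(r^*)g^{A,\lambda}_{x,\xi}$ to a wavepacket-like amplitude (the paper's $I_{\lambda,B}(r^*,g)$) via $e^{i\Gamma^B}$ and integration by parts in $\lambda^{-1}\eta$ and $\lambda y'$. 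It also uses Peetre's inequality for the alternative bound, exactly as the paper does.

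One small correction: when $\lambda y$-derivatives fall on $r^{(1)}_x(A)$, you need the higher derivatives of $B$ to \emph{decay} like $\langle w\rangle^{-1-\epsilon}$, which \eqref{eq:B decay} provides; boundedness alone would leave terms like $\lambda^{-3}|\xi^t|$ without the $\langle x^t\rangle^{-1-\epsilon}$ factor.
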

	
	\begin{proof}
		We first note that it suffices to prove the first $G_{K}$ bound above, since the second bound can be derived from the first one by dominating the factor $\langle x^{t}\rangle^{-1-\epsilon}$ and $|\xi^{t}|$ with
		$$
		\begin{aligned}
			\langle x^{t}\rangle^{-1-\epsilon} & = \langle x^{t} - z^{t} + z^{t}\rangle^{-1-\epsilon}\\
			& \lesssim \langle z^{t}\rangle^{-1-\epsilon}\langle x^{t} - z^{t}\rangle^{1+\epsilon}\\
			& \leq \langle z^{t}\rangle^{-1-\epsilon}\langle \lambda(x^{t} - z^{t})\rangle^{2},
		\end{aligned}
		$$
		and
		$$
		\begin{aligned}
			|\xi^{t}| & = |\xi^{t}-\zeta^{t}+\zeta^{t}|\\
			& \leq |\xi^{t}-\zeta^{t}| + |\zeta^{t}|\\
			& \leq \lambda\langle \lambda^{-1}(\xi^{t}-\zeta^{t})\rangle + |\zeta^{t}|.
		\end{aligned}
		$$
		
		By linearity, we can split $G_{K} = G_{\mathcal{R}_{1}} + G_{\mathcal{R}_{2}}$, where
		$$
			G_{\mathcal{R}_{k}}(t;x,\xi,z,\zeta) := \int_{\mathbb{R}^{d}} \overline{g^{A,\lambda}_{z^{t},\zeta^{t}}}(y) \mathcal{R}_{k}g^{A,\lambda}_{x^{t},\xi^{t}}(y)\ dy,\quad k \in \{1,2\}.
		$$
		We start with $G_{1}$, involving the $\mathcal{R}_{1}$ factor as defined in (\ref{def:R_1})
		$$
		\mathcal{R}_{1} = \partial_{\eta}h(t;x^{t},\xi^{t})\cdot \left( - r^{(1)}_{x^{t}}(A(y,x^{t})) + r^{(1)}_{x^{t}}(A(x^{t},y))\right).
		$$
		 Similar to \eqref{eq:G^A,lambda re-write}, $G_{\mathcal{R}_{1}}$ can be re-written with  $W_{B,\lambda}$ from \eqref{def:wavepackets product W} as 
		\begin{equation}\label{eq:G_1}
			\begin{aligned}
				& G_{\mathcal{R}_{1}}(t;x,\xi,z,\zeta)\\
				&\quad = e^{i\varphi^{A}(z^{t},x^{t})}e^{i\zeta^{t}\cdot(z^{t}-x^{t})} \partial_{\eta}h(t;x^{t},\xi^{t}) \int_{\mathbb{R}^{d}} \bigg[e^{i \lambda^{-1}(\xi^{t}-\zeta^{t}) \cdot \lambda(y-x^{t})} \times\\
				&\quad \quad \quad \times  \left( - r^{(1)}_{x^{t}}(A(y,x^{t})) + r^{(1)}_{x^{t}}(A(x^{t},y))\right) W_{B,\lambda}(y,x^{t},z^{t})\bigg]\ d\lambda y.
			\end{aligned}
		\end{equation}
		
		 Notice that for the $\lambda y$-integral, we can do integration by parts to gain decay factors of $\langle \lambda^{-1}(\xi^{t}-\zeta^{t})\rangle^{-2}$, making use of the identity 
		\begin{equation}\label{eq:IBP lambda y}
			\langle \lambda^{-1}(\xi^{t}-\zeta^{t})\rangle^{-2}(1-\Delta_{\lambda y})e^{i\lambda^{-1}(\xi^{t}-\zeta^{t}) \cdot \lambda y} =  e^{i\lambda^{-1}(\xi^{t}-\zeta^{t}) \cdot \lambda y}. 
		\end{equation}
		In order to perform the integration by parts, we check the $\partial_{\lambda y}$-derivatives of the integrand. 
	 By  definition of $r^{(1)}_{x}(A)$ in \eqref{def:magnetic remainder}, we have 
\begin{align*}
			r^{(1)}_{x^{t}}(A_{j}(y,x^{t})) 
			& = \sum_{m,l=1}^{d} (y_{m}-x^{t}_{m})(y_{l}-x^{t}_{l}) \partial_{y_{m}}\partial_{y_{l}}  A_{j}(x^t+\theta(y-x^t),x^t)  \\
			& = \lambda^{-2}\sum_{m,l=1}^{d} \lambda(y_{m}-x^{t}_{m})\lambda(y_{l}-x^{t}_{l}) \partial_{y_{m}}\partial_{y_{l}}A_{j}(x^t+\theta(y-x^t),x^t).
\end{align*}
		So taking $\partial_{\lambda y}$-derivatives on $r^{(1)}_{x}(A)$ involves taking $\partial_{\lambda y}$-derivatives on $\partial_{y_{m}}\partial_{y_{l}}A_{j}$.  By \eqref{eq:A_j^(2)}, it in turn depends on  derivatives of $B$.  Denote by  $B_{jk}^{(n)}$ the $n$-th order  partial derivatives  of $B_{jk}$, then by the decay assumption \eqref{eq:B decay} for $B_{jk}^{(n)}$ with $n \geq 1$, we have 
		for $0\leq s\leq 1, |\alpha| \geq 0$ and $\lambda \geq 1$
\begin{align*}
			 |\partial_{\lambda y}^{\alpha} B_{jk}^{(n)}(x+s(y-x))|
			\leq C_{\alpha,B} \langle x+s(y-x) \rangle^{-1-\epsilon}
			 \lesssim C_{\alpha,B} \langle x \rangle^{-1-\epsilon} \langle \lambda(y-x) \rangle^{1+\epsilon}.
\end{align*}
	 Plugging this into \eqref{eq:A_j^(2)}, 	 we can bound 
		$$
			|\partial_{\lambda y}^{\alpha}r_{x^{t}}^{(1)}(A(y,x^{t}))| \leq C_{\alpha,B}\lambda^{-2}\langle x^{t} \rangle^{-1-\epsilon}\langle \lambda(y-x^{t})\rangle^{4+\epsilon},\quad |\alpha|\geq 0,
		$$
		 and the same bound holds for $r^{(1)}_{x^{t}}(A(x^{t},y))$. Together with the $\lambda y$-derivative bounds for $W_{B,\lambda}$ in \eqref{eq:W derivatives}, that is for all $N,N',N'' \in \mathbb{N}$
		$$
		\begin{aligned}
			& \left|\partial_{\lambda y}^{\alpha}W_{B,\lambda}(y,x^{t},z^{t})\right|\\
			& \quad \lesssim C_{N,N',N'',\alpha,B} \langle \lambda(z^{t}-x^{t}) \rangle^{-N} \langle\lambda(y-z^{t}) \rangle^{-N'} \langle \lambda(y-x^{t}) \rangle^{-N''},
		\end{aligned}
		$$
		 we can repeatedly do integration by parts relying on \eqref{eq:IBP lambda y} for the $\lambda y$-integral in \eqref{eq:G_1} to gain the desired decays.  So we obtain a bound for $G_{\mathcal{R}_{1}}$ of the form
		\begin{equation}\label{eq:G_1 bound}
			\begin{aligned}
				& |G_{\mathcal{R}_{1}}(t;x,\xi,z,\zeta)|\\
				&\quad \leq |\partial_{\eta}h(t;x^{t},\xi^{t})| C_{N,B} \lambda^{-2} \langle x^{t}\rangle^{-1-\epsilon} \langle \lambda(z^{t}-x^{t}) \rangle^{-N} \langle \lambda^{-1}(\zeta^{t}-\xi^{t}) \rangle^{-N}\times\\
				& \quad\quad \times \int_{\mathbb{R}^{d}} \langle\lambda(y-z^{t}) \rangle^{-d-1} \langle \lambda(y-x^{t}) \rangle^{-d-1}\ d\lambda y,\\
				& \quad \lesssim C_{N,h,B}\lambda^{-2}(1+\langle x^{t}\rangle^{-1-\epsilon}|\xi^{t}|) \langle \lambda(z^{t}-x^{t}) \rangle^{-N} \langle \lambda^{-1}(\zeta^{t}-\xi^{t}) \rangle^{-N},\quad  \forall N \geq 0,
			\end{aligned}
		\end{equation}
		where for the last inequality we used the $t$-uniform boundedness of $\partial_{\eta}h$
		$$
		|\partial_{\eta}h(t;x^{t},\xi^{t})| \leq C_{h}(1+|x^{t}|+|\xi^{t}|).
		$$

		For the kernel $G_{\mathcal{R}_{2}}$, we see that 
		$$
		\begin{aligned}
			G_{\mathcal{R}_{2}}(t;x,\xi,z,\zeta) & = \int_{\mathbb{R}^{d}} \overline{g^{A,\lambda}_{z^{t},\zeta^{t}}}(y) \mathcal{R}_{2}g^{A,\lambda}_{x^{t},\xi^{t}}(y)\ dy\\
			& = \int_{\mathbb{R}^{d}} \overline{g^{A,\lambda}_{z^{t},\zeta^{t}}}(y) \operatorname{Op}_{KN}^{A}\left(r^{(1)}_{x^{t},\xi^{t}}(h) + h_{r}\right)g^{A,\lambda}_{x^{t},\xi^{t}}(y)\ dy\\
			& =: \int_{\mathbb{R}^{d}} \overline{g^{A,\lambda}_{z^{t},\zeta^{t}}}(y) \operatorname{Op}_{KN}^{A}\left(r^{*}(t;y,\eta,x,\xi)\right)g^{A,\lambda}_{x^{t},\xi^{t}}(y)\ dy,
		\end{aligned}
		$$
		which shows that the boundedness of  $G_{\mathcal{R}_{2}}$   relies on the boundedness of the symbol
		$$
		r^{*} := r^{(1)}_{x^{t},\xi^{t}}(h) + h_{r}.
		$$
		The symbol $h_{r}$ is relatively simple, since by  definition (\ref{def:KN symbol}), $h_{r}(t) \in S^{0,(0)}(\mathbb{R}^{2d})$, so its derivatives are fully bounded, i.e., 
		$$
		|\partial_{y}^{\alpha}\partial_{\eta}^{\beta}h_{r}(t;y,\eta)| \leq C_{h,\alpha,\beta}, \quad |\alpha|+|\beta| \geq 0.
		$$
		On the other hand, the symbol $r^{(1)}_{x^{t},\xi^{t}}(h)$ and its derivatives can be bounded by factors of $|y-x^{t}|$ and $|\eta-\xi^{t}|$. By  definition \eqref{def:r1 remainder} for $r^{(1)}_{x^{t},\xi^{t}}(h)$, we see that
		$$
		\begin{aligned}
			|\partial_{y}^{\alpha}\partial_{\eta}^{\beta}r^{(1)}_{x^{t},\xi^{t}}(h)(t;y,\eta)| \lesssim \sum_{0\leq|\gamma_{1}|+|\gamma_{2}|\leq2}C_{h,\alpha,\beta} |y-x^{t}|^{|\gamma_{1}|}|\eta-\xi^{t}|^{|\gamma_{2}|}.
		\end{aligned}
		$$
		So we bound $r^{*}$ and its derivatives by
		\begin{equation}\label{eq:r* bounds}
			|\partial_{y}^{\alpha}\partial_{\eta}^{\beta}r^{*}(t;y,\eta,x^{t},\xi^{t})| \lesssim C_{h,\alpha,\beta}\left(1+ \sum_{0\leq|\gamma_{1}|+|\gamma_{2}|\leq2} |y-x^{t}|^{|\gamma_{1}|}|\eta-\xi^{t}|^{|\gamma_{2}|}\right).
		\end{equation}
		In the following, the variable $t$ does not play a role, so we suppress it from the notation. We first take a closer look at the pseudo-differentiation
		$$
			\begin{aligned}
			 & \operatorname{Op}^{A}_{KN}\left(r^{*}\right)g^{A,\lambda}_{x,\xi}(y)\\
				&\quad = (2\pi)^{-d} \int_{\mathbb{R}^{2d}} e^{i(y-y')\cdot\eta} e^{-i\varphi^{A}(y',y)}r^{*}(y,\eta,x,\xi) g^{A,\lambda}_{x,\xi}(y')\ dy'd\eta\\
				&\quad = (2\pi^{-1}\lambda^{\frac{1}{2}})^{d}e^{i\varphi^{A}(y,x)}e^{i(y-x)\cdot\xi}\times \\
				&\quad \quad \quad \times \int_{\mathbb{R}^{2d}} e^{i[(y-x)-(y'-x)]\cdot(\eta-\xi)} r^{*}(y,\eta,x,\xi) e^{i\Gamma^{B}(y',x,y)} g(\lambda(y'-x))\ dy'd\eta\\
				&\quad =: (2\pi^{-1}\lambda^{\frac{1}{2}})^{d}e^{i\varphi^{A}(y,x)}e^{i(y-x)\cdot\xi} I_{\lambda,B}(r^{*},g)(y,x,\xi),
			\end{aligned}
		$$
		here $I_{\lambda,B}(r^{*},g)$ denotes the integral part. We claim $I_{\lambda,B}(r^{*},g)$ and its $\lambda y$-derivatives satisfy 
		\begin{equation}\label{eq:I_lambda B decay}
			|\partial_{\lambda y}^{\alpha}I_{\lambda,B}(r^{*},g)(y,x,\xi)| \lesssim C_{\alpha,B}\lambda^{C_{N}}\langle \lambda(y-x)\rangle^{-N},\qquad  N \in \mathbb{N}.
		\end{equation}
		 So up to some $\lambda$ factors, $I_{\lambda,B}(r^{*},g)(y,x,\xi)$ can play a similar role as the re-scaled Gaussian $g(\lambda(y-x))$, and in this sense $G_{\mathcal{R}_{2}}$ is not so different from the simpler kernel $G^{A,\lambda}$ defined in (\ref{def:G^A,lambda}).  Notice that we can re-write $I_{\lambda,B}(r^{*},g)$ as
		$$
			\begin{aligned}
				& I_{\lambda,B}(r^{*},g)(y,x,\xi)\\
				&\quad = \int_{\mathbb{R}^{d}} \bigg[e^{i\lambda(y-x)\cdot\lambda^{-1}(\eta-\xi)} r^{*}(y,\eta,x,\xi) \times\\
				& \quad \quad \times \left(\int_{\mathbb{R}^{d}} e^{-i\lambda(y'-x)\cdot\lambda^{-1}(\eta-\xi)}e^{i\Gamma^{B}(y',x,y)}g(\lambda(y'-x))\ d\lambda y'\right)\bigg]\ d\lambda^{-1}\eta,
			\end{aligned}
		$$
		 where we substituted $dy'd\eta$ with $d\lambda y' d\lambda^{-1}\eta$.  To verify \eqref{eq:I_lambda B decay}, we do $\lambda y$-differentiation term by term inside the integral $I_{\lambda,B}(r^{*},g)$. For the phase term involving $y$, we see factors that can be dominated by $\langle \lambda^{-1}(\eta-\xi)\rangle$,  since  
		\begin{equation}\label{eq:eta-xi factors}
			|\partial_{\lambda y_{j}}e^{i\lambda(y-x)\cdot\lambda^{-1}(\eta-\xi)}| = \lambda^{-1}|\eta_{j}-\xi_{j}| \leq \langle \lambda^{-1}(\eta-\xi)\rangle.
		\end{equation}
	 For the $\lambda y$-differentiation on $r^{*}$, similar to \eqref{eq:r* bounds}, by $\lambda \geq 1$ it can be bounded as 
		\begin{equation}\label{eq:r* bounds rescaled}
			|\partial_{\lambda y}^{\alpha}\partial_{\lambda^{-1}\eta}^{\beta}r^{*}(y,\eta,x,\xi)| \lesssim C_{h,\alpha,\beta}\left(\lambda^{|\beta|}+ \lambda^{|\beta|+2}\langle \lambda(y-x)\rangle^{2}\langle \lambda^{-1}(\eta-\xi)\rangle^{2}\right).
		\end{equation}
		We note the extra $\lambda^{C}$ factors on the right hand side above are due to the fact that the $\eta$-dependence of $r^{*}$ is always in the form of $(\eta-\xi)$, and each $\partial_{\lambda^{-1}\eta}$ produces a $\lambda$ factor
		$$
		\partial_{\lambda^{-1}\eta_{j}}(\eta_{j}-\xi_{j}) \leq \lambda \partial_{\lambda^{-1}\eta_{j}}\lambda^{-1}(\eta_{j}-\xi_{j}) \leq \lambda.
		$$
		For the $\lambda y$-differentiation on $e^{i\Gamma^{B}(y',x,y)}$, by the derivative bounds \eqref{eq:flux bound} of $e^{i\Gamma^{B}}$, for $\lambda \geq 1$ and $|\alpha| \geq 1$ we have  
		$$
		\begin{aligned}
			|\partial^{\alpha}_{\lambda y}e^{i\Gamma^{B}}(y',x,y)| & \leq C_{\alpha,B}(\langle \lambda(y-y')\rangle +\langle \lambda(y-x)\rangle)^{|\alpha|}\\
			& \lesssim C_{\alpha,B}(\langle \lambda(y-x)\rangle\langle \lambda(y'-x)\rangle +\langle \lambda(y-x)\rangle)^{|\alpha|}\\
			& \lesssim C_{\alpha,B}[(1+\langle \lambda(y'-x)\rangle)\langle \lambda(y-x)\rangle]^{|\alpha|}.
		\end{aligned}
		$$
		Because $g(\lambda(y'-x))$ is Schwartz in $\lambda(y'-x)$, any factor of $\langle \lambda(y'-x)\rangle$ will be harmless. To show the bound \eqref{eq:I_lambda B decay}, it remains to show that we can control factors of $\langle \lambda^{-1}(\eta-\xi)\rangle$ and $\langle \lambda(y-x)\rangle$.  Notice we can gain a decaying factor of $\langle \lambda(y-x)\rangle^{-2}$ repeatedly by integration by parts with respect to the $\lambda^{-1}\eta$-integral using
		\begin{equation}\label{eq:IBP lambda^-1 eta}
			\langle \lambda(y-x)\rangle^{-2}(1-\Delta_{\lambda^{-1} \eta})e^{i\lambda(y-x) \cdot \lambda^{-1}\eta} = e^{i\lambda(y-x) \cdot \lambda^{-1}\eta},
		\end{equation}
		and gain a decaying factor $\langle \lambda^{-1}(\eta-\xi)\rangle^{-2}$ factors by repeatedly integration by parts with respect to the $\lambda y'$-integral using
		\begin{equation}\label{eq:IBP lambda y'}
			\langle \lambda^{-1}(\eta-\xi)\rangle^{-2}(1-\Delta_{\lambda y'})e^{i\lambda^{-1}(\eta-\xi) \cdot \lambda y'} = e^{i\lambda^{-1}(\eta-\xi) \cdot \lambda y'}.
		\end{equation}
		
For the integration by parts of the $\lambda^{-1}\eta$-integral, we need to check the $\partial_{\lambda^{-1}\eta}$ derivatives of $r^{*}$ and the $\partial_{\lambda^{-1}\eta}$ derivatives of the $\lambda y'$-integral. For $r^{*}$, we use again the derivative bounds \eqref{eq:r* bounds rescaled}. For the $\partial_{\lambda^{-1}\eta}$ derivatives of the $\lambda y$-integral
		$$
		 I_{\lambda y'} := \int_{\mathbb{R}^{d}} e^{-i\lambda(y'-x)\cdot\lambda^{-1}(\eta-\xi)}e^{i\Gamma^{B}(y',x,y)}g(\lambda(y'-x))\ d\lambda y',
		$$
	each $\lambda^{-1}\eta$-differentiation only produces a factor that can be dominated by the harmless factor $\langle \lambda (y'-x)\rangle$, that is
		\begin{equation}\label{eq:y'-x factors}
			|\partial_{\lambda^{-1}\eta_{j}} e^{-i\lambda(y'-x)\cdot\lambda^{-1}(\eta-\xi)}| = \lambda|y'_{j}-x_{j}| \leq \langle \lambda (y'-x)\rangle.
		\end{equation}
		So by repeated integration by parts relying on \eqref{eq:IBP lambda^-1 eta}, we can obtain decay  in  $\langle \lambda(y-x)\rangle^{-N}$ with the cost of some $\lambda^{C_{N}}$ factor and a $\langle \lambda^{-1}(\eta-\xi)\rangle^{2}$ factor from \eqref{eq:r* bounds rescaled}. 
		 For the integration by parts \eqref{eq:IBP lambda y'} inside the $I_{\lambda y'}$ integral, we also note by the derivative bounds \eqref{eq:flux bound} of $e^{i\Gamma^{B}}$, for $\lambda \geq 1$ and $|\alpha| \geq 1$ we have  
		$$
		\begin{aligned}
			|\partial^{\alpha}_{\lambda y'}e^{i\Gamma^{B}}(y',x,y)| 
			& \lesssim C_{\alpha,B}[(1+\langle \lambda(y-x)\rangle)\langle \lambda(y'-x)\rangle]^{|\alpha|}.
		\end{aligned}
		$$
	 Combined with the fact that $g(\lambda(y'-x))$ is Schwartz in $\lambda(y'-x)$, we obtain 
		\begin{equation}\label{eq:y'-x factors II}
			\left|\partial_{\lambda y'}^{\alpha} (e^{i\Gamma^{B}(y',x,y)})g(\lambda(y'-x))\right| \lesssim C_{N,\alpha,B}(1+\langle \lambda(y-x)\rangle^{|\alpha|})\langle \lambda(y'-x)\rangle^{-N},\qquad   N \in \mathbb{N}.
		\end{equation}
		 This means by integration by parts \eqref{eq:IBP lambda y'} for the $I_{\lambda y'}$ integral, we obtain sufficient decay of $\langle \lambda^{-1}(\eta-\xi)\rangle^{-N}$ at the cost of some finite power of $\langle \lambda(y-x)\rangle$, which will be dominated by the sufficient decay of $\langle \lambda(y-x)\rangle^{-N}$ we obtained from the integration by parts \eqref{eq:IBP lambda^-1 eta} in the  $\lambda^{-1}\eta$-integral.  Then the claimed bound \eqref{eq:I_lambda B decay} holds by aforementioned integration by parts and bounds \eqref{eq:eta-xi factors}, \eqref{eq:r* bounds rescaled}, \eqref{eq:y'-x factors} and \eqref{eq:y'-x factors II}.
		
	 	With the decay property of $I_{\lambda,B}(r^{*},g)$ in \eqref{eq:I_lambda B decay}, we define $W'_{B,\lambda}$  similar to $W_{B,\lambda}$  in  \eqref{def:wavepackets product W}
		$$
		W'_{B,\lambda}(y,x^{t},\xi^{t},z^{t}) := e^{i\Gamma^{B}(y,x^{t},z^{t})} g(\lambda(y-z^{t}))I_{\lambda,B}(r^{*},g)(y,x^{t},\xi^{t}),
		$$
		so that combining \eqref{eq:flux bound} and \eqref{eq:I_lambda B decay} we obtain a decay estimate similar to \eqref{eq:W derivatives},   that is for all $N,N',N'' \in \mathbb{N}$, we have
		\begin{equation}\label{eq:W' derivatives}
			\begin{aligned}
				& \left|\partial_{\lambda y}^{\alpha}W'_{B,\lambda}(y,x^{t},\xi^{t},z^{t})\right|\\
				& \quad \lesssim C_{N,N',N'',\alpha,B}\lambda^{C_{N''}}\langle \lambda(z^{t}-x^{t})\rangle^{-N}\langle \lambda(y-z^{t})\rangle^{-N'}\langle \lambda(y-x^{t})\rangle^{-N''}.
			\end{aligned}
		\end{equation}
		 Now we can re-write $G_{\mathcal{R}_{2}}$ with $W'_{B,\lambda}$ as
		\begin{equation*}
			\begin{aligned}
				& G_{\mathcal{R}_{2}}(t;x,\xi,z,\zeta)\\
				&\quad = \int_{\mathbb{R}^{d}} \overline{g^{A,\lambda}_{z^{t},\zeta^{t}}}(y) \lambda^{\frac{d}{2}}e^{i\varphi^{A}(y,x^{t})}e^{i(y-x^{t})\cdot\xi^{t}} I_{\lambda,B}(r^{*},g)(y,x^{t},\xi^{t})\ dy\\
				&\quad = e^{i\varphi^{A}(z^{t},x^{t})}e^{i\zeta^{t}\cdot(z^{t}-x^{t})} \times\\
				&\quad \quad \times \int_{\mathbb{R}^{d}} e^{i (\xi^{t}-\zeta^{t}) \cdot (y-x^{t})} e^{i\Gamma^{B}(y,x^{t},z^{t})} g(\lambda(y-z^{t}))I_{\lambda,B}(r^{*},g)(y,x^{t},\xi^{t})\ d\lambda y\\
				&\quad = e^{i\varphi^{A}(z^{t},x^{t})}e^{i\zeta^{t}\cdot(z^{t}-x^{t})} \int_{\mathbb{R}^{d}} e^{i (\xi^{t}-\zeta^{t}) \cdot (y-x^{t})} W'_{B,\lambda}(y,x^{t},\xi^{t},z^{t})\ d\lambda y,
			\end{aligned}
		\end{equation*}
		and integration by parts again based on 
		$$
		\langle \lambda^{-1}(\zeta^{t}-\xi^{t})\rangle^{-2}(1-\Delta_{\lambda y})e^{i\lambda^{-1}(\zeta-\xi) \cdot \lambda y} = e^{i(\zeta-\xi) \cdot \lambda y}.
		$$
		Together with the decay bounds of $W'_{B,\lambda}$ in \eqref{eq:W' derivatives}, we obtain for all $N \in \mathbb{N}$ 
		$$
			|G_{\mathcal{R}_{2}}(t;x,\xi,z,\zeta)| \lesssim C_{N,B}\lambda^{C_{N}} \langle \lambda(z^{t}-x^{t})\rangle^{-N} \langle \lambda^{-1}(\zeta^{t}-\xi^{t})\rangle^{-N}.
		$$
		Combining the above bound with the $G_{\mathcal{R}_{1}}$ bound in \eqref{eq:G_1 bound},  we have the desired bound for $G_{K}$.
	\end{proof}
	
	\section{Extension to the case of time-dependent magnetic fields}
	
	In this section we consider  a  time-dependent magnetic field $B(t)$ with components $B_{jk} \in C^{1}(\mathbb{R},C^{\infty}(\mathbb{R}^{d}))$,  satisfying the assumptions in Theorem \ref{thm:time-dependent}. In particular, we recall the decay condition \eqref{eq:B(t) bounds} for $B_{jk}(t)$ and $\dot{B}_{jk}(t)$, that is, for some $\epsilon > 0$
	$$
		|\partial^{\alpha}B_{jk}(t;y)| + |\partial^{\alpha}\dot{B}_{jk}(t;y)| \leq C_{\alpha,B}\langle y\rangle^{-1-\epsilon},\qquad  |\alpha| \geq 1.
	$$
 In principal, by substituting $A(x)$ with $A(t;x)$ constructed from $B(t)$, we can adapt most of the proof for Theorem \ref{thm:main} to prove Theorem \ref{thm:time-dependent}. In the following, we only highlight the necessary changes needed beyond obvious substitution and straightforward calculations.

 Recalling the construction for the time-independent case, we define analogously the time-dependent vector potentials $A(t)$ and phase function $\varphi^{A(t)}$ by 
	\begin{align}\label{def:phi^A(t)} \nonumber 
			A_{j}(t;y,x)  & := -\sum_{k=1}^{d}\int_{0}^{1} s(y_{k}-x_{k}) B_{jk}(t;x+s(y-x))\ ds,\quad y \in \mathbb{R}^{d},\\
			A_j(t;y) & :=-\sum_{k=1}^d \int_0^1 sy_k B_{j k}(t;sy)\  d s = A_{j}(t;y,0),\\ \nonumber
			\varphi^{A(t)}(y,x) &  := (y-x)\cdot \int_{0}^{1} A(t;(1-s)x+sy,0)\ ds.
	\end{align}
	We can further construct time-dependent magnetic pseudo-differential operators
	$$
	\operatorname{Op}^{A(t)}(h)u(y) := (2\pi)^{-d}\int_{\mathbb{R}^{2d}} e^{i(y-y')\cdot \eta} e^{-i\varphi^{A(t)}(y',y)} h(\tfrac{y+y'}{2},\eta)u(y')\ dy'd\eta, \quad y \in \mathbb{R}^{d},
	$$
	and time-dependent magnetic wavepackets
	$$
	g^{A(t),\lambda}_{x,\xi}(y) := \lambda^{\frac{d}{2}}e^{i\xi\cdot (y-x)}g(\lambda(y-x))e^{i\varphi^{A(t)}(y,x)}, \quad y \in \mathbb{R}^{d},
	$$
	and also accordingly the transform $\mathcal{T}^{A(t)}_{\lambda}$ and the  magnetic modulation  spaces $M^{m,p}_{A(t)}$.
	
	With the above constructions, Lemma \ref{lem:flat approximation} holds with obvious changes since the time dependence does not play a role there. It suffices to substitute the $A$ and $B$ in Lemma \ref{lem:flat approximation} with the time-dependent ones $A(t)$ and $B(t)$, in particular \eqref{def:m_h}, \eqref{def:R_1} and \eqref{def:R_2} should be replaced by 
	\begin{equation} \label{def:m_h R_1 R_2 with t}
		\begin{aligned}
			m^{A(t)}(h)(x,\xi) & := h(x,\xi) - \partial_{\eta}h(x,\xi)\cdot (\xi+A(t;x))\\
			\mathcal{R}_{1}(t)g^{A(t),\lambda}_{x,\xi}(y) & :=  \partial_{\eta}h(x,\xi)\cdot\left( - r^{(1)}_{x}(A(t;y,x)) + r^{(1)}_{x}(A(t;x,y))\right)g^{A(t),\lambda}_{x,\xi}(y)\\
			\mathcal{R}_{2}(t)g^{A(t),\lambda}_{x,\xi}(y) & := \operatorname{Op}_{KN}^{A(t)}\left(r^{(1)}_{x,\xi}(h) + h_{r}\right)g^{A(t),\lambda}_{x,\xi}(y).
		\end{aligned}
	\end{equation}

	On the other hand, some further modification is required for Lemma \ref{lem:wavepacket solution} and Proposition \ref{prop:parametrix}, because it involves $t$-derivatives. We define a new vector field $\tilde{H}^{B(t)}$
	$$
	\tilde{H}^{B(t)}_{j} := \left(\partial_{\xi_{j}}h(t)\partial_{x_{j}},-\partial_{x_{j}}h(t)\partial_{\xi_{j}}+\dot{A}_{j}(t;x)\partial_{\xi_{j}}+\sum_{k=1}^{d}B_{kj}(t;x)\partial_{\xi_{k}}h(t)\partial_{\xi_{j}}\right),\quad 1\leq j \leq d,
	$$
	and the associated flow $\chi'_{B}(h)(t,s): (x^{s},\xi^{s}) \mapsto (x^{t},\xi^{t})$ satisfying
	$$
	\begin{cases}
		\dot{x^{t}_{j}} = \partial_{\xi_{j}}h(t;x^{t},\xi^{t}),\\
		\dot{\xi^{t}_{j}} =  -\partial_{x_{j}}h(t;x^{t},\xi^{t}) + \dot{A}_{j}(t;x^{t}) + \sum_{k=1}^{d}B_{kj}(t;x^{t})\partial_{\xi_{k}}h(t;x^{t},\xi^{t}).
	\end{cases}
	$$
	Notice that $\tilde{H}^{B(t)}$ is also divergence-free and by our assumptions of $B(t)$ and construction of $A(t)$,  we have
	$$
	|\dot{A}_{j}(t;x^{t})| \lesssim 1 + |x^{t}|,
	$$
	so it is straightforward to verify that Lemma \ref{lem:flow} and Lemma \ref{lem:t-averaging} also hold for $\chi'_{B}(h)$.
	
	In order to adapt Proposition \ref{prop:parametrix} for the time-dependent magnetic potentials, we define
	\begin{equation}\label{def:g^A(t)}
		\begin{gathered}
			\psi'(t;x,\xi) := \int_{0}^{t} -m^{A(\tau)}(h)(\tau;x^{\tau},\xi^{\tau})\ d\tau,\\
			g^{A(t),\lambda}_{\psi'}(t;y,x,\xi) := e^{i\psi'(t)}\chi'_{B}(h)(t,0)g^{A(t),\lambda}_{x,\xi}(y).
		\end{gathered}
	\end{equation}
	
	\begin{lem}\label{lem:wavepacket solution with A(t)}
		The wavepacket $g^{A(t),\lambda}_{\psi'}$ associated to the flow $\chi'_{B}(h)$ satisfies
		$$
		(D_{t} + \operatorname{Op}^{A(t)}(h))g^{A(t),\lambda}_{\psi'}(t;y,x,\xi) = (\mathcal{R}_{1} + \mathcal{R}_{2} +  \mathcal{R}_{3})(t)g^{A(t),\lambda}_{\psi'}(t;y,x,\xi),
		$$
		where $\mathcal{R}_{1}$ and $\mathcal{R}_{2}$ are as defined in \eqref{def:m_h R_1 R_2 with t}, and  for some $c \in (0,1)$,  
		\begin{equation}\label{def:R_3}
			\mathcal{R}_{3}(t;y,x^{t}) := \sum_{j=1}^{d} (y_{j}-x^{t}_{j}) \int_{0}^{1} \partial_{x}\dot{A}_{j}(t;x^{t}+cs(y-x^{t})) \cdot s(y-x^{t})\ ds,
		\end{equation}
		 which satisfies   the bound
		\begin{equation}\label{eq:R_3 bound}
			|\mathcal{R}_{3}(t;y,x^{t})| \leq C_{B}\langle y-x^{t}\rangle^{2},\qquad  t \in \mathbb{R}.
		\end{equation}
	\end{lem}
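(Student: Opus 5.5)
The plan is to differentiate $g^{A(t),\lambda}_{\psi'}$ in $t$ directly and compare the result with the time-independent calculation in Lemma \ref{lem:wavepacket solution} and Proposition \ref{prop:parametrix}. The only new feature is that the wavepacket now depends on $t$ in two ways: through the flow $(x^t,\xi^t)$, and through the phase $e^{i\varphi^{A(t)}(y,x^t)}$. Write
$$
g^{A(t),\lambda}_{\psi'}(t;y,x,\xi)=e^{i\psi'(t)}\lambda^{d/2}e^{i\xi^t\cdot(y-x^t)}g(\lambda(y-x^t))e^{i\varphi^{A(t)}(y,x^t)}.
$$
Its $t$-derivative then splits into three parts. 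The first is $i\partial_t\psi'=-im^{A(t)}(h)$. The second is the chain-rule contribution along $(\dot x^t,\dot\xi^t)$, which is exactly $\tilde H^{B(t)}$ applied to the wavepacket. The third is the explicit contribution $i\,\partial_t\varphi^{A(t)}(y,x)|_{x=x^t}$. Lemma \ref{lem:flat approximation}, with $A,B$ replaced by $A(t),B(t)$ as in \eqref{def:m_h R_1 R_2 with t}, gives
$$
\operatorname{Op}^{A(t)}(h)g=(i\tilde H^{B}_{\rm old}+m^{A(t)}(h)+\mathcal R_1+\mathcal R_2)g,
$$
where $\tilde H^{B}_{\rm old}$ is the vector field without the $\dot A$ term. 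Adding $D_t g$, the $m^{A(t)}(h)$ terms cancel and the $\tilde H^B_{\rm old}$ parts cancel. What remains is
$$
-i\sum_j\dot A_j(t;x^t)\partial_{\xi_j}g-\partial_t\varphi^{A(t)}(y,x^t)\,g .
$$
Using $\partial_{\xi_j}g=i(y_j-x^t_j)g$, this becomes
$$
\Big[\sum_j\dot A_j(t;x^t)(y_j-x^t_j)-\partial_t\varphi^{A(t)}(y,x^t)\Big]g .
$$

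The next step is to compute $\partial_t\varphi^{A(t)}$ from \eqref{def:phi^A(t)}. Differentiating in $t$ gives
$$
\partial_t\varphi^{A(t)}(y,x)=(y-x)\cdot\int_0^1\dot A(t;x+s(y-x))\,ds .
$$
I then Taylor expand $\dot A_j(t;x+s(y-x))$ at $x$ to first order with the mean value form:
$$
\dot A_j(t;x+s(y-x))=\dot A_j(t;x)+\partial_x\dot A_j(t;x+cs(y-x))\cdot s(y-x).
$$
The zeroth-order term $\sum_j(y_j-x_j)\dot A_j(t;x)$ cancels the first bracketed term exactly. What is left is $-\mathcal R_3$ as in \eqref{def:R_3}, up to the sign and convention being absorbed into the definition. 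Strictly, the intermediate point $c$ may depend on $s$; I would either note this or instead use the integral form of the remainder, which gives the same bound. This proves the identity. I should also check the sign convention of $\dot A$ in $\tilde H^{B(t)}$ against this cancellation. If the sign comes out opposite, the flow definition forces the cancellation, and the remainder $\mathcal R_3$ is unchanged in form.

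For the bound \eqref{eq:R_3 bound}, it suffices to show $|\partial_x\dot A_j(t;w)|\le C_B$ uniformly in $w$ and $t$. Then
$$
|\mathcal R_3|\le C|y-x^t|^2\le C\langle y-x^t\rangle^2 .
$$
Now $\dot A_j(t;w)=-\sum_k\int_0^1 s w_k\dot B_{jk}(t;sw)\,ds$. Differentiating in $w_l$ gives two terms. The first is $-\int_0^1 s\dot B_{jl}(t;sw)\,ds$, which is bounded by $\|\dot B\|_\infty$. The second is $-\sum_k\int_0^1 s^2w_k\,\partial_l\dot B_{jk}(t;sw)\,ds$, and I expect this to be the main obstacle because of the factor $|w|$. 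I would handle it with the decay assumption \eqref{eq:B(t) bounds}: the integrand is bounded by $s^2|w|\langle sw\rangle^{-1-\epsilon}$. Substituting $r=s|w|$ gives
$$
\int_0^1 s^2|w|\langle sw\rangle^{-1-\epsilon}\,ds\le\int_0^1 s|w|\langle s|w|\rangle^{-1-\epsilon}\,ds\le \int_0^{|w|}\frac{r}{|w|}\langle r\rangle^{-1-\epsilon}\,dr\lesssim 1,
$$
where the last bound holds because $\int_0^{R}r\langle r\rangle^{-1-\epsilon}dr\lesssim R^{1-\epsilon}+1$ for $\epsilon<1$, and is even smaller otherwise, and one divides by $R=|w|$. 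When $|w|\le1$ the bound is trivial. Hence $\partial_x\dot A$ is uniformly bounded, which yields \eqref{eq:R_3 bound}. The same computation gives the bound $|\dot A(t;x)|\lesssim1+|x|$ used earlier.
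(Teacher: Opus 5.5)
Your route is the paper's. You split the $t$-dependence of $g^{A(t),\lambda}_{\psi'}$ into $\psi'$, the flow, and the explicit $A(t)$ in $\varphi^{A(t)}$, then use Lemma~\ref{lem:flat approximation} with $A(t),B(t)$, Taylor expand $\dot A$ inside $\partial_t\varphi^{A(t)}$, and bound $\partial_x\dot A$ uniformly. That last part is correct and more explicit than the paper, which only asserts $\sup|\partial_i\dot A_j|\le C_B$. The term carrying $w_k$ genuinely needs the decay of $\partial\dot B$, and your substitution $r=s|w|$ handles it. Your remark that $c$ depends on $s$, so the integral remainder should be used, is also right.

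The identity step, however, rests on a sign error. Since $D_t=-i\partial_t$ and $\partial_te^{i\varphi^{A(t)}}=i(\partial_t\varphi^{A(t)})e^{i\varphi^{A(t)}}$, the explicit time dependence contributes $+\partial_t\varphi^{A(t)}(y,x^t)\,g$ to $D_tg$, not $-\partial_t\varphi^{A(t)}(y,x^t)\,g$. The paper correctly has $D_te^{i\varphi^{A(t)}}=((y-x)\cdot\dot A+\mathcal R_3)e^{i\varphi^{A(t)}}$. Suppose the $\xi$-equation of the flow contains $\sigma\dot A_j(t;x^t)$ with $\sigma=\pm1$. Then the true leftover after the cancellations you describe is
\[
-i\sigma\,\dot A(t;x^t)\cdot\partial_\xi g+\partial_t\varphi^{A(t)}(y,x^t)\,g=\big[(1+\sigma)\,\dot A(t;x^t)\cdot(y-x^t)+\mathcal R_3(t;y,x^t)\big]g .
\]

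\emph{Case $\sigma=+1$.} This is the sign printed in the definition of $\chi'_B(h)$. The zeroth-order terms add rather than cancel. The term $2\dot A(t;x^t)\cdot(y-x^t)$ can grow like $\langle x^t\rangle|y-x^t|$, since the hypotheses allow $\dot B$ to be non-decaying. It therefore cannot be absorbed into any bound $C\langle y-x^t\rangle^2$, and the identity would fail.

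\emph{Case $\sigma=-1$.} The identity holds exactly for $\dot\xi^t_j=-\partial_{x_j}h-\dot A_j(t;x^t)+\sum_kB_{kj}\partial_{\xi_k}h$. This is the electric force $-\partial_tA$, as expected from $\operatorname{Op}^A(\eta_j)=-i\partial_j-A_j$. The remainder is then $+\mathcal R_3$, exactly as in the statement.

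So there is nothing to ``absorb into the definition'' of $\mathcal R_3$, which is fixed by the statement anyway. As written, your computation produces $\mathcal R_1+\mathcal R_2-\mathcal R_3$ under the flow for which the result is false. Your closing hedge points in the right direction, but you should redo the sign bookkeeping and state the flow with $-\dot A$.

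The paper's own proof needs the same correction. Its conclusion $D_tg=-i\tilde H^B_{\mathrm{old}}g-(m^{A(t)}(h)-\mathcal R_3)g$ is consistent with its (correct) formula for $D_te^{i\varphi^{A(t)}}$ only if the flow carries $-\dot A$. The $+\dot A_j$ printed in $\tilde H^{B(t)}$ therefore appears to be a misprint.
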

	
	\begin{proof}
		By the same calculation as in Lemma \ref{lem:flat approximation}, we have
		$$
		\begin{aligned}
			\operatorname{Op}^{A(t)}(h)g^{A(t),\lambda}_{\psi'}(t;y,x,\xi) & = i\sum_{j=1}^{d} \left(\partial_{\xi_{j}}h\partial_{x_{j}}-(\partial_{x_{j}}h - \sum_{k=1}^{d} B_{kj}(t;x)\partial_{\xi_{k}}h)\partial_{\xi_{j}}\right)g^{A(t),\lambda}_{\psi'}(t;y,x,\xi)\\
			& \quad \quad + (m^{A(t)}(h)+\mathcal{R}_{1}(t)+\mathcal{R}_{2}(t))g^{A(t),\lambda}_{\psi'}(t;y,x,\xi)
		\end{aligned}
		$$
		with the terms defined in \eqref{def:m_h R_1 R_2 with t}. Then it suffices to show  that the $t$-derivative of $g^{A(t),\lambda}_{\psi'}$ is of the form 
		\begin{equation}\label{eq:t-derivative g^A(t)}
			\begin{aligned}
				D_{t}g^{A(t),\lambda}_{\psi'}(t;y,x,\xi) & = -i\sum_{j=1}^{d}\left(\tilde{H}^{B(t)}_{j}-\dot{A}_{j}(t;x)\partial_{\xi_{j}}\right)g^{A(t),\lambda}_{\psi'}(t;y,x,\xi)\\
				& \quad \quad -\left(m^{A(t)}(h) - \mathcal{R}_{3}(t)\right)g^{A(t),\lambda}_{\psi'}(t;y,x,\xi),
			\end{aligned}
		\end{equation}
		with $\mathcal{R}_{3}$ satisfying the bound \eqref{eq:R_3 bound}. We claim
		$$
		D_{t}e^{i\varphi^{A(t)}(y,x)} = \left((y-x)\cdot \dot{A}(t;x) + \mathcal{R}_{3}(t;y,x)\right)e^{i\varphi^{A(t)}(y,x)},
		$$
		which combined with the construction of $g^{A(t),\lambda}_{\psi'}$ as in \eqref{def:g^A(t)} gives \eqref{eq:t-derivative g^A(t)}. Firstly notice
		$$
		\partial_{t}e^{i\varphi^{A(t)}(y,x)} = i\dot{\varphi^{A(t)}}(y,x)e^{i\varphi^{A(t)}(y,x)}, \quad \dot{\varphi^{A(t)}} := d\varphi^{A(t)}/dt.
		$$
		Recall our construction in \eqref{def:phi^A(t)}, from which we get 
		$$
		\begin{aligned}
			\dot{\varphi^{A(t)}}(y,x) & = (y-x)\cdot \int_{0}^{1} \dot{A}(t;(1-s)x+sy,0)\ ds\\
			& = \sum_{j=1}^{d} (y_{j}-x_{j}) \int_{0}^{1} \dot{A}_{j}(t;x+s(y-x),0)\ ds\\
			& = \sum_{j=1}^{d} (y_{j}-x_{j}) \int_{0}^{1} \dot{A}_{j}(t;x)\ ds\\
			& \quad \quad + \sum_{j=1}^{d} (y_{j}-x_{j}) \int_{0}^{1} \partial_{x}\dot{A}_{j}(t;x+cs(y-x)) \cdot s(y-x)\ ds,
		\end{aligned}
		$$
		where we used the Taylor expansion of $\dot{A}_{j}$ at $x$. With $\mathcal{R}_{3}$ as defined in \eqref{def:R_3}, we see that 
		$$
		\dot{\varphi^{A(t)}}(y,x) = (y-x)\cdot \dot{A}(t;x) + \mathcal{R}_{3}(t;y,x).
		$$
		For the first term on the right hand side above we use the identity
		$$
		[(y-x)\cdot \dot{A}(t;x)]g^{A(t),\lambda}_{\psi'}(t;y,x,\xi) = -i\dot{A}(t;x) \cdot \partial_{\xi}g^{A(t),\lambda}_{\psi'}(t;y,x,\xi).
		$$
		For the $\mathcal{R}_{3}$ term 
		$$
		\mathcal{R}_{3}(t;y,x) := \sum_{j=1}^{d} (y_{j}-x_{j}) \int_{0}^{1} \partial_{x}\dot{A}_{j}(t;x+cs(y-x)) \cdot s(y-x)\ ds,
		$$
		we see explicitly for the integrand 
		$$
		\begin{aligned}
			\partial_{i}\dot{A}_{j}(t;x+cs(y-x)) & = -\partial_{i}\left(\sum_{k=1}^d \int_0^1  \theta (x_{k}+cs(y_{k}-x_{k}))   \dot{B}_{j k}(t;\theta (x+cs(y-x)))\  d \theta\right)\\
			& = - \int_0^1  (\theta-cs)   \dot{B}_{j i}(t;\theta (x+cs(y-x)))\  d \theta\\
			& \quad \quad -\sum_{k=1}^d \int_0^1 \theta (x_{k}+cs(y_{k}-x_{k})) \partial_{i}\dot{B}_{j k}(t;\theta (x+cs(y-x)))\  d \theta.
		\end{aligned}
		$$
		By the boundedness of  $B(t), \dot{B}(t)$  and the decay assumption \eqref{eq:B(t) bounds}, we see that 
		$$
		\sup_{x,y} |\partial_{i}\dot{A}_{j}(t;x+cs(y-x))| \leq C_{B}, \qquad  t \in \mathbb{R}.
		$$
		By the above bound and our definition \eqref{def:R_3} of $\mathcal{R}_{3}$, we have the bound \eqref{eq:R_3 bound} as desired.
	\end{proof}
	
	With Lemma \ref{lem:wavepacket solution with A(t)}, we can now adapt Proposition \ref{prop:parametrix} to the case of $A(t)$.
	
	\begin{prop}
		Let $0\leq s,t \leq T$. We define the operator $\tilde{S}(t,s)$ by 
		\begin{equation}
			\tilde{S}(t,s)u_{0}(y) := \int_{\mathbb{R}^{2d}\times\mathbb{R}^{d}} g^{A(t),\lambda}_{\psi'}(t;y,x,\xi) \overline{g^{A(s),\lambda}_{\psi'}}(s;y',x,\xi)u_{0}(y')\ dy'dxd\xi,
		\end{equation}
		for $u_{0} \in \mathcal{S}(\mathbb{R}^{d})$. We also define the operator $K(t,s)$ by
		\begin{equation}
			K(t,s)u_{0}(y) := \int (\mathcal{R}_{1}+\mathcal{R}_{2}+\mathcal{R}_{3})(t)g^{A(t),\lambda}_{\psi}(t;y,x,\xi) \overline{g^{A(s),\lambda}_{\psi}}(s;y',x,\xi)u_{0}(y')\ dy'dxd\xi,
		\end{equation}
		for $u_{0} \in \mathcal{S}(\mathbb{R}^{d})$. We then have
		\begin{equation}
			(D_{t}+\operatorname{Op}^{A(t)}(h))\tilde{S}(t,s)u_{0} = K(t,s)u_{0},\quad u_{0} \in \mathcal{S}(\mathbb{R}^{d}).
		\end{equation}
		Moreover, for  $1 \leq p \leq \infty, m \in \mathbb{R}$ and fixed $t,s$,  the operator $\tilde{S}(t,s)$ is bounded from $M^{m,p}_{A(s)}(\mathbb{R}^{d})$ to $M^{m,p}_{A(t)}(\mathbb{R}^{d})$.
	\end{prop}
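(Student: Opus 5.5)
The plan is to follow the proof of Proposition \ref{prop:parametrix} essentially verbatim, with Lemma \ref{lem:wavepacket solution with A(t)} replacing Lemma \ref{lem:flat approximation} combined with (\ref{eq:t-derivative}).

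\emph{The parametrix equation.} For $u_{0}\in\mathcal{S}(\mathbb{R}^{d})$, the factor $\overline{g^{A(s),\lambda}_{\psi'}}(s;y',x,\xi)u_{0}(y')$ does not depend on $t$ or $y$. I would therefore apply $D_{t}+\operatorname{Op}^{A(t)}(h)$ under the integral sign. Lemma \ref{lem:wavepacket solution with A(t)} then turns the integrand into $(\mathcal{R}_{1}+\mathcal{R}_{2}+\mathcal{R}_{3})(t)g^{A(t),\lambda}_{\psi'}(t)$, which is exactly the kernel defining $K(t,s)$.

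Differentiating under the integral is legitimate by dominated convergence. The wavepackets are bounded by $g(\lambda(y-x^{t}))$. Their $t$-derivatives produce at most polynomial factors in $x^{t},\xi^{t}$ and $y-x^{t}$. These factors are controlled by (\ref{eq:Gronwall}) for the flow $\chi'_{B}(h)$, whose validity was noted before the lemma. The remaining integral $\int\overline{g^{A(s),\lambda}_{x^{s},\xi^{s}}}u_{0}\,dy'$ is Schwartz in $(x^{s},\xi^{s})$, and $dx\,d\xi=dx^{s}d\xi^{s}$ by volume preservation, so these polynomial factors are absorbed.

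\emph{Boundedness from $M^{m,p}_{A(s)}$ to $M^{m,p}_{A(t)}$.} I would compute
\[
\mathcal{T}^{A(t)}_{\lambda}\tilde{S}(t,s)u_{0}(z,\zeta)=\int_{\mathbb{R}^{2d}}G^{A(t),\lambda}(z,\zeta,x^{t},\xi^{t})\,e^{i\psi'(t)-i\psi'(s)}\,\mathcal{T}^{A(s)}_{\lambda}u_{0}(x^{s},\xi^{s})\,dx\,d\xi .
\]
Here $G^{A(t),\lambda}$ is the kernel (\ref{def:G^A,lambda}) with both wavepackets built from $A(t)$. This is the key structural point: the $s$-wavepackets pair with $u_{0}$ and produce precisely $\mathcal{T}^{A(s)}_{\lambda}u_{0}$, while both wavepackets in $G$ use the same potential $A(t)$. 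Hence the flux identity (\ref{def:magnetic flux}) applies with $B(t)$, giving $W_{B(t),\lambda}$ and the derivative bounds (\ref{eq:W derivatives}). Those bounds depend only on $\|\partial^{\alpha}B(t)\|_{\infty}$, which is uniform in $t$.

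The same integration by parts as before then yields the off-diagonal decay (\ref{eq:G^A,lambda bound}). To handle the weights I would use (\ref{eq:Gronwall}) and (\ref{eq:Gronwall backward}) for $\chi'_{B}(h)$, which give $\nu_{\lambda,m}(x^{t},\xi^{t})\nu_{\lambda,-m}(x^{s},\xi^{s})\lesssim C^{|m|}_{T,h,B}$. After the change of variables $dx\,d\xi=dx^{s}d\xi^{s}$, Young's inequality for integral operators gives
\[
\|\tilde{S}(t,s)u_{0}\|_{M^{m,p}_{A(t)}}\lesssim C_{T,h,B,m}\,\|u_{0}\|_{M^{m,p}_{A(s)}} .
\]
The extension from Schwartz functions to general $u_{0}$ follows by density, using Proposition \ref{prop:magnetic modulation space} with $A(s)$ in place of $A$.

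\emph{Main obstacle.} The step I expect to be hardest is verifying the two flow properties for $\chi'_{B}(h)$ that this argument relies on: global existence with the Gronwall bounds, and volume preservation. The only new term in the vector field is $\dot{A}_{j}(t;x)\partial_{\xi_{j}}$. Since its coefficient is independent of $\xi$, it contributes zero to the divergence, so volume preservation survives. The bound $|\dot{A}(t;x)|\lesssim 1+|x|$ follows from $\|\dot{B}(t)\|_{\infty}\le C_{B}$ and the transversal gauge formula (\ref{def:phi^A(t)}), and this linear growth is exactly what the Gronwall argument of Lemma \ref{lem:flow} tolerates. Once these are checked, everything else is a direct transcription of the earlier proof.
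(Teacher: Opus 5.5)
Your proposal is correct and is essentially the paper's own argument. The paper gives no separate proof; it transplants the proof of Proposition \ref{prop:parametrix} using Lemma \ref{lem:wavepacket solution with A(t)}, the $t$-uniform kernel bounds built from $B(t)$, and the Gronwall and volume-preservation properties of $\chi'_{B}(h)$, exactly as you do, and your use of both \eqref{eq:Gronwall} and \eqref{eq:Gronwall backward} to handle weights of either sign is slightly more careful than the original.

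One small caveat comes from the paper rather than from you. As printed, $\chi'_{B}(h)$ has $+\dot{A}_{j}(t;x^{t})$ in $\dot{\xi}^{t}_{j}$, but the explicit $t$-dependence of $e^{i\varphi^{A(t)}}$ contributes $-i\dot{A}\cdot\partial_{\xi}$ to $D_{t}$ with the same sign as the flow term, so the cancellation claimed in Lemma \ref{lem:wavepacket solution with A(t)} actually requires $-\dot{A}_{j}$. This is harmless for your argument, since your flow estimates only use the linear growth of $\dot{A}$ and the fact that it adds nothing to the divergence.
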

	
 With the above proposition, we can prove adapted versions of Lemma \ref{lem:G bound} and Lemma \ref{lem:volterra} by making the obvious changes of substituting $A$ with $A(t)$ to the relevant proofs. The adapted arguments hold because the time-dependent $\mathcal{R}_{1}(t),\mathcal{R}_{2}(t),\mathcal{R}_{3}(t)$ have bounds that are comparable to the time-independent case. Then eventually Theorem \ref{thm:time-dependent} can be derived from the adapted lemmas similarly to the derivation of Theorem \ref{thm:main}.

\bibliographystyle{plain}
\bibliography{magnetic.bib}
	
\end{document}